\numberwithin{equation}{section}
\newtheorem{theorem}{Theorem}[section]
\newtheorem{lemma}[theorem]{Lemma}
\theoremstyle{definition}
\newtheorem{problem}[theorem]{Problem}
\newtheorem{definition}[theorem]{Definition}
\newcommand{\abs}[1]{\left\vert #1 \right\vert}
\newcommand{\norm}[1]{\left\Vert #1 \right\Vert}
\newcommand{\opnorm}[1]{\left\vert\kern-0.25ex\left\vert\kern-0.25ex\left\vert #1 \right\vert\kern-0.25ex\right\vert\kern-0.25ex\right\vert}
\newcommand{\scalar}[2]{\left( #1, #2 \right)}
\begin{document}
    \title{Time-continuous strongly conservative space-time finite element methods for the dynamic Biot model}
    \author{Johannes Kraus\thanks{University of Duisburg-Essen, Faculty of Mathematics, Thea-Leymann-Str. 9, 45127 Essen, Germany} \thanks{Corresponding author: \href{mailto:johannes.kraus@uni-due.de}{johannes.kraus@uni-due.de}} \and Maria Lymbery\footnotemark[1] \and Kevin Osthues\footnotemark[1]}
    
    \maketitle
    
    \begin{abstract}
        We consider the dynamic Biot model (see~[Biot,~M.~A. \textit{J. Appl. Phys.}~\textbf{33,} 1482--1498 (1962)]) describing the interaction between fluid flow and solid
        deformation including wave propagation phenomena in both the liquid and solid phases of a saturated porous medium. This model
        couples a hyperbolic equation for momentum balance to a second-order in time dynamic Darcy law and a parabolic equation for the
        balance of mass and is here considered in three-field formulation with the displacement of the elastic matrix, the fluid velocity, and the
        fluid pressure being the physical fields of interest.
        
        A family of variational space-time finite element methods is proposed, which combines a continuous-in-time Galerkin ansatz of
        arbitrary polynomial degree with $H(\mathrm{div})$-conforming approximations of the displacement field, its time derivative, and the flux field--of
        discontinuous Galerkin (DG) type for displacements--with a piecewise polynomial pressure approximation, providing an inf-sup stable strongly
        conservative mixed method in each case.
        We prove error estimates in a combined energy norm in space for the maximum norm in time.
        The theoretical results are confirmed by numerical experiments for different polynomial orders in space and time.
    \end{abstract}
    
    \begin{quote}
        \textbf{Keywords.}
        Dynamic Biot model; poroelasticity; time-continuous space-time Galerkin,
        strongly conservative $H(\mathrm{div})$-conforming discontinuous Galerkin, mixed method; a priori error analysis.
    \end{quote}

    \section{Introduction}
    The theory of poroelasticity dates back to mid of the last century and was most decisively developed by Maurice Anthony Biot in a series of
    pioneering papers, see~\cite{Biot1941general,Biot1955theory,Biot1956theory,Biot1956waves1,Biot1956waves2}. Starting with a quasi-static
    model of consolidation~\cite{Biot1941general}, the theory was generalized from isotropic to anisotropic media~\cite{Biot1955theory}, viscoelastic
    materials~\cite{Biot1956theory}, and extended to dynamic systems~\cite{Biot1956waves1,Biot1956waves2}.
    The models in~\cite{Biot1956waves1,Biot1956waves2,Biot1962acoust,Biot1962Mech} are derived from general principles of nonequilibrium
    thermodynamics and describe the interaction of fluid flow and deformation of (linear, anisotropic) porous media taking into account also wave
    propagation, energy dissipation, and other relaxation effects.
    
    First fundamental mathematical results related to the well-posedness, discretization and stability of the models proposed by Biot
    were addressed, e.g., in~\cite{Carlson1973Lin,Dafermos1968exist,Showalter2000Diff,Zienkiewicz1982Basic,Zienkiewicz1984dynamic,
    Zenisek1984existence,Mielke2013Homo}, for a more comprehensive overview see also~\cite{Cheng2016Poro,Seifert2022Evol}.
    
    The classical area of application of (thermo-)poroelasticity is geomechanics where typical problems reach from evaluating subsidence
    induced by hydrocarbon production, simulation of underground reservoirs, environmental planning to the engineering of safe barriers for radioactive
    waste disposals, see, e.g.,~\cite{Barenblatt1960basic,Zenisek1984finite,Bjornara2016vertically,Michalec2021fully,Gao2022coupled}.
    In recent years, the important observation that biological tissues can be interpreted as porous, permeable and deformable media infiltrated by fluids,
    such as blood and interstitial fluid, cf. e.g.~\cite{BOTH2022114183, Pitre2017}, has ultimately led to the successful employment of poroelasticity
    models in the studies of the biomechanical behavior of different human organs, such as the brain,
    \cite{Guo_etal2018subject-specific,Vardakis2019fluid,Kraus2023Hybr},
    the liver, \cite{Aichele2021fluids},
    the kidney, \cite{Ong2009Deformation},
    which have offered better understanding of various disease states ranging from renal failure to traumatic brain injury to cancer.
    
    The range of physical parameters in the models under consideration, especially in biomedical applications, makes the design and
    analysis of adequate numerical methods a challenging task. This is why significant effort has been put into developing stable and
    parameter-robust methods for the numerical solution of the underlying coupled partial differential algebraic equations over the last decades.
    
    A rigorous stability and convergence analysis of finite element approximations of the two-field formulation of Biot's model of consolidation
    in terms of the displacement field and the fluid pressure has first been presented in~\cite{MuradLoula1992improved,MuradLoula1994stability}.
    The derived a priori error estimates are valid for both semidiscrete and fully discrete formulations, where inf-sup stable finite elements are used
    for space discretization and the backward Euler method is employed for time-stepping. Moreover, a post‐processing technique has been
    introduced to improve the pore-pressure accuracy.
    Numerous works have addressed various aspects of locking-free and (locally) conservative discretization of this class of problems since then,
    more lately, also using generalized and extended finite element, see, e.g.,~\cite{Fries2010extended} and enriched Galerkin methods, see,
    e.g.,~\cite{Lee2018enriched,Lee2023locking}, and to study flow and wave propagation phenomena in fractured heterogeneous porous
    media, see~\cite{Janaki2018enriched}.
    Optimal $L^2$ error estimates for the quasi-static Biot problem have been derived only recently, see~\cite{Wheeler2022optimal}.
    
    Discretizations of the quasi-static Biot problem in classic three-field formulation, using standard continuous Galerkin approximations
    of the displacement field and a mixed approximation of the flux-pressure pair, where the flux variable is introduced to the system via
    the standard Darcy law, have originally been proposed in~\cite{PhillipsWheeler2007coupling1,PhillipsWheeler2007coupling2}.
    The error estimates proved in these works are for both continuous- and discrete-in-time models.
    This approach has also been extended to discontinuous Galerkin approximations of the displacement field,
    see~\cite{PhillipsWheeler2008coupling}, and other nonconforming approximations, e.g., using modified rotated bilinear
    elements~\cite{Yi2013coupling}, or Crouzeix-Raviart elements for the displacements, see~\cite{Hu2017nonconforming}.
    More recently, in~\cite{Hong2018Para}, a family of strongly mass conserving discretizations based on $H(\mathrm{div})$-conforming
    discontinuous Galerkin approximation of the displacement field has been proven to be uniformly stable in properly fitted
    parameter-dependent norms resulting in near best approximation results that are robust with respect to all model and discretization
    parameters (Lam\'{e} parameters, permeability and Biot-Willis parameter, storage coefficient, time step and mesh size).
    Error estimates for the related continuous- and discrete-in-time scheme have been delivered in~\cite{KanschatRiviere2018finite}
    and hybridization techniques considered in~\cite{Kraus2021Unif}. A stabilized conforming method for the Biot problem in three-field
    formulation was proposed in~\cite{Rodrigo2018new}.
    
    The four-field formulation of the quasi-static Biot problem in~\cite{Yi2014convergence} couples two mixed problems and uses a
    mixed finite element method based on the Hellinger-Reissner principle for the mechanics subproblem and a standard mixed finite
    element method for the flow subproblem.
    Optimal a priori error estimates have been proved for both semidiscrete and fully discrete schemes using the Arnold-Winther
    space and the Raviart-Thomas space in the two coupled mixed finite element methods, respectively. The same discretization for
    the same formulation has further been analyzed in~\cite{Lee2016robust} providing error estimates in $L^\infty$ norm in time and
    $L^2$ norm in space, which are robust with respect to the Lam\'{e} parameters and remain valid in the limiting case of vanishing
    constrained storage coefficient.
    
    Several of these techniques have also been transferred to multiple network poroelasticity theory (MPET), with a focus on
    the stability of continuous and discrete models, mass conservation, hybridization, near-best approximation results, uniform
    preconditioning, and splitting schemes, see, e.g.~\cite{HongEtAl2020parameter,Hong2020Para,Kraus2021Unif}.
    Other three-, four-, and five-field models additionally involve a total pressure and their discretization and a priori error analysis
    as well as robust preconditioners have been addressed,
    e.g., in~\cite{Oyarzua2016locking,Lee2017parameter,Kumar2020conservative,Lee2023analysis}.
    
    Most of the works mentioned so far--except some references in the first two paragraphs--propose and analyze numerical methods
    for quasi-static models only. While these have been intensively studied over the years and are quite well understood, far less is known
    about stable numerical solution methods for the dynamic Biot model. However, there are a few early groundbreaking theoretical results
    on the design and error analysis of finite element approximations for the latter class of problems, see,
    e.g.,~\cite{Dupont1973L2,Baker1976error,Wheeler1973Linfty}.
    
    Driven by the interest to model both the dynamics of the solid and fluid phases, we consider a classic three-field formulation
    here. The dynamic model, which is subject to the error analysis presented in this work, describes the dynamics of the solid and fluid
    phases and takes into account also fluid inertia effects. Note that this requires a mass balance~\eqref{eq:dg_dyn_biot-3} based on a flux
    variable introduced via the dynamic Darcy law~\eqref{eq:dg_dyn_biot-2}.\footnote{This is contrary to the model studied in~\cite{Bause2024Conv},
    which uses the standard Darcy flux in the mass balance equation.}
    
    For discretization, we employ here
    $H(\mathrm{div})$-conforming approximations, conforming for fluxes and nonconforming of discontinuous Galerkin type
    for the displacement field, thus following~\cite{Hong2018Para}.
    The finite element spaces for space-discretizations are chosen in such a way that they ensure point-wise conservation of mass,
    as in~\cite{Hong2018Para,Hong2019Cons}.
    The proofs we present here are inspired by and use ideas
    from~\cite{Karakashian2005Conv,Bause2024Conv}.
    
    Hence, the contribution of the present paper is a rigorous convergence analysis of a family of variational space-time finite element methods
    for the dynamic Biot model as presented in~\cite{Biot1956waves1,Biot1956waves2,Biot1962Mech}.
    The continuous-in-time Galerkin-Petrov scheme that we use for time discretization has also been studied in~\cite{Bause2024Conv} but for
    a different hyperbolic-parabolic problem in two-field formulation.
    For the construction of time discretizations with higher regularity, we refer to \cite{Anselmann2020Gale,Bause2022C1co,Becher2021Vari}.
    
    The remainder of the manuscript is organized as follows: In Section~\ref{sec:problem} we state the three-field formulation of the dynamic Biot
    problem under consideration. Section~\ref{sec:preliminaries} summarizes some of the notation used in this paper, recalls quadrature formulas,
    introduces the interpolation operators and some related useful auxiliary results. Section~\ref{sec:dg_problem_formulations} presents the variational
    formulations giving the starting point for the discrete models and the forthcoming analysis. Section~\ref{sec:dg_error_analysis} contains
    a detailed derivation of the error estimates which are finally given in Theorem~\ref{thm:dg_discretization_error}, which is the main
    theoretical result of this work. In Section~\ref{sec:dg_numerical_experiments}, we present some numerical experiments. Lastly, we end our analysis with a short conclusion.

    \section{Problem formulation}
    \label{sec:problem}
    In this paper, we study the dynamic Biot model for a linear elastic, homogeneous, isotropic, porous solid saturated with a slightly compressible
    fluid as proposed in~\cite{Biot1962Mech, Biot1941general}, that is, the following coupled system of partial differential equations
    \begin{subequations}
        \label{eq:dg_dyn_biot}
        \begin{alignat}{2}
            \bar{\rho} \partial_{t}^{2} \bm{u} - 2 \mu \mathrm{div}(\bm{\epsilon}(\bm{u})) - \lambda \nabla \mathrm{div}(\bm{u}) + \alpha \nabla p + \rho_{f} \partial_{t} \bm{w} & = \bm{f} & \qquad & \text{in $ \Omega \times \left(0, T\right] $}, \label{eq:dg_dyn_biot-1} \\
            \rho_{f} \partial_{t}^{2} \bm{u} + \rho_{w} \partial_{t} \bm{w} + \bm{K}^{-1} \bm{w} + \nabla p & = \bm{0} & \qquad & \text{in $ \Omega \times \left(0, T\right] $}, \label{eq:dg_dyn_biot-2} \\
            s_{0} \partial_{t} p + \alpha \mathrm{div}(\partial_{t} \bm{u}) + \mathrm{div}(\bm{w}) & = g & \qquad & \text{in $ \Omega \times \left(0, T\right] $}, \label{eq:dg_dyn_biot-3}
        \end{alignat}
    \end{subequations}
    in a bounded Lipschitz domain $ \Omega \subset \mathbb{R}^{d}, d \in \{2, 3\} $ and $ T > 0 $.
    The physical parameters in the model are
    the total density $ \bar{\rho} = (1 - \phi_{0}) \rho_{s} + \phi_{0} \rho_{f} > 0 $ and
    the effective fluid density $ \rho_{w} \geq \phi_{0}^{-1} \rho_{f} > 0 $, where $ \rho_{s} > 0 $ represents the solid density, $ \rho_{f} > 0 $ the fluid density and
    $ \phi_{0} \in (0, 1) $ the porosity.
    In addition, the Biot-Willis parameter is specified by $ \alpha \in \left[\phi_{0}, 1\right] $, cf.~\cite{Biot1957Thee} and the constrained specific storage coefficient by $ s_{0} > 0 $, cf.~\cite{Biot1962Mech}.
    The material-dependent Lam\'{e} parameters are denoted by $ \lambda > 0 $ and $ \mu > 0 $.
    Furthermore, $ \bm{K} $ represents the symmetric positive definite permeability tensor, $ \bm{f}: \Omega \times [0, T] \to \mathbb{R}^{d} $ the body force density
    and $ g: \Omega \times [0, T] \to \mathbb{R} $ a source term.
    The strain tensor is given by the symmetric part of the gradient of $ \bm{u} $, that is $ \bm{\epsilon}(\bm{u}) = (\nabla \bm{u} + (\nabla \bm{u})^{\top}) / 2 $.
    
    The dynamic Biot model comprises the momentum balance equation \eqref{eq:dg_dyn_biot-1}, the dynamic Darcy law \eqref{eq:dg_dyn_biot-2}
    and the mass balance equation \eqref{eq:dg_dyn_biot-3}, see \cite{Mielke2013Homo}.
    Therein, the unknown physical fields are described by the displacement $ \bm{u} = \bm{u}(\mathbf{x}, t) $, seepage velocity $ \bm{w} = \bm{w}(\mathbf{x}, t) $
    and fluid pressure $ p = p(\mathbf{x}, t) $.
    The model in~\cite{Bause2024Conv} follows from system~\eqref{eq:dg_dyn_biot} by setting the fluid density and herewith the effective fluid density to zero
    resulting in a simplification of the momentum balance equation~\eqref{eq:dg_dyn_biot-1}. The latter model is suitable for applications in which fluid inertia effects
    can be neglected.
    By neglecting the acceleration of the solid and the fluid phases, the dynamic Biot model~\eqref{eq:dg_dyn_biot} reduces to the classical three-field formulation
    of the quasi-static Biot problem~\cite{Hong2018Para}.
    We complete problem~\eqref{eq:dg_dyn_biot} by the following initial and boundary conditions
    \begin{alignat}{4}
        \label{eq:dg_boundary_conditions}
        \begin{aligned}
            \bm{u}(\cdot, 0) & = \bm{u}_{0} & \quad & \text{in $ \Omega $}, \qquad\qquad & 
            \bm{u} & = \bm{0} & \quad & \text{on $ \partial \Omega \times \left(0, T\right] $}, \\
            \partial_{t} \bm{u}(\cdot, 0) & = \bm{v}_{0} & \quad & \text{in $ \Omega $}, \qquad\qquad &
            \bm{w} \cdot \bm{n} & = 0 & \quad & \text{on $ \partial \Omega \times \left(0, T\right] $}, \\
            p(\cdot, 0) & = p_{0} & \quad & \text{in $ \Omega $}.
        \end{aligned}
    \end{alignat}
    Subject to the initial and boundary conditions \eqref{eq:dg_boundary_conditions}, problem~\eqref{eq:dg_dyn_biot} is well-posed which can be shown using Picard's theorem~\cite[Thm.~6.2.1]{Seifert2022Evol}.

    \section{Preliminaries}
    \label{sec:preliminaries}
    
    \subsection{Function spaces}
    In this paper, we use standard notation for Sobolev spaces.
    The $ L^{2} $ inner product is denoted by $ \scalar{\cdot}{\cdot} $.
    For our analysis, we need the Bochner spaces $ L^{2}(J; B) $, $ C(J; B) $, $ C^{m}(J; B), m \in \mathbb{N} $, where $ B $ is a Banach space and $ J \subseteq [0, T] $.
    Throughout the remainder, we mark vector- and matrix-valued quantities in bold.
    
    In view of the proposed time stepping method, we split the time interval $ I := \left(0, T\right] $ into $ N $ subintervals $ I_{n} := \left( t_{n-1}, t_{n} \right] $ of length $ \tau_{n} := t_{n} - t_{n-1} $, where $ 0 = t_{0} < t_{1} < \cdots < t_{N} = T $.
    Moreover, we define the time mesh by $ \mathcal{M}_{\tau} := \{I_{1}, \ldots, I_{N}\} $.
    For a fixed $ k \in \mathbb{N}_{0} $, $n \in \{1,2,\ldots,N\}$, and Banach space $B$, we introduce the space of $B$-valued polynomials in time on $I_{n}$ by
    \begin{align*}
        \mathbb{P}_{k}(I_{n}; B) := \{ f_{\tau} : I_{n} \to B \mid f_{\tau}(t) = \sum_{i = 0}^{k} b_{i} t^{i} \ \forall t \in I_{n}, b_{i} \in B, i = 0, \ldots, k \}.
    \end{align*}
    Additionally, we define the space of globally continuous 
    piecewise $B$-valued polynomials in time
    \begin{align*}
        \mathbb{X}_{\tau}^{k}(B) := \{ f_{\tau} \in C(\overline{I}; B) \mid f_{\tau} \big|_{I_{n}} \in \mathbb{P}_{k}(I_{n}; B) \ \forall I_{n} \in \mathcal{M}_{\tau} \}
    \end{align*}
    and the space of piecewise $B$-valued polynomials in time 
    \begin{align*}
        \mathbb{Y}_{\tau}^{k}(B) := \{ f_{\tau} \in L^{2}(I; B) \mid f_{\tau} \big|_{I_{n}} \in \mathbb{P}_{k}(I_{n}; B) \ \forall I_{n} \in \mathcal{M}_{\tau} \}.
    \end{align*}
    On the time intervals $ I_{n} := \left( t_{n-1}, t_{n} \right] $, we will also make use of the right-sided limit of functions $f$, denoted by
    $ f(t_{n-1}^{+}) := \lim_{t \searrow t_{n-1}} f(t) $.

    \subsection{Quadrature formulas and interpolation operators}
    \label{subsec:quadrature_formulas}
    Here, we review some quadrature formulas and interpolation operators needed throughout the remainder of the paper.
    The $ (k+1) $-point Gauss-Lobatto quadrature formula on $ I_{n} = \left(t_{n-1}, t_{n}\right] $ is given by
    \begin{align*}
        Q_{n}^{\text{GL}}(f) := \tau_{n} \sum_{i = 0}^{k} \hat{\omega}_{i}^{\text{GL}} f\big|_{I_{n}}(t_{n, i}^{\text{GL}}),
    \end{align*}
    where $ t_{n, i}^{\text{GL}} := T_{n}(\hat{t}_{i}^{\text{GL}}), i = 0, \ldots, k $ are the quadrature points on interval $ \overline{I}_{n} $ with quadrature weights $ \hat{\omega}_{i}^{\text{GL}} $.
    The quadrature points $ \{t_{n, i}^{\text{GL}}\}_{i=0}^{k} $ are obtained via the affine linear transformation
    $ T_{n} : \hat{I} \to I_{n}, \ T_{n}(\hat{t}) = \tau_{n} \hat{t} + t_{n-1}, $
   from the Gauss-Lobatto quadrature points $ \hat{t}_{i}^{\text{GL}} $ on reference interval $ \hat{I} := [0, 1] $.
    
    Furthermore, we need the $ k $-point Gauss quadrature formula on $ I_{n} $ defined by
    \begin{align}
        \label{eq:quadrature_G}
        Q_{n}^{\text{G}}(f) := \tau_{n} \sum_{i = 1}^{k} \hat{\omega}_{i}^{\text{G}} f\big|_{I_{n}}(t_{n, i}^{\text{G}}),
    \end{align}
    where $ t_{n, i}^{\text{G}} := T_{n}(\hat{t}_{i}^{\text{G}}) $ and $ \hat{\omega}_{i}^{\text{G}}, i = 1, \ldots, k $ denote the quadrature points and weights, 
    respectively, with $ \{\hat{t}_{i}^{\text{G}}\}_{i=1}^{k} $ being the quadrature points on reference interval $ \hat{I} $.
    Both quadrature formulas are exact for polynomials of degree $ 2k-1 $.
    
    In our analysis, we make use of different Lagrange interpolation operators.
    To this end, we first define local operators on each subinterval $ I_{n} $ via
    \begin{alignat*}{3}
        \mathcal{I}_{\tau, n}^{\text{GL}} & : C(\overline{I}_{n}; B) \to \mathbb{P}_{k}(\overline{I}_{n}; B), \qquad &
        \mathcal{I}_{\tau, n}^{\text{GL}} f(t_{n, i}^{\text{GL}}) & = f(t_{n, i}^{\text{GL}}), \quad & i & = 0, \ldots, k, \\
        \mathcal{I}_{\tau, n}^{\text{G}} & : C(\overline{I}_{n}; B) \to \mathbb{P}_{k-1}(\overline{I}_{n}; B), \qquad &
        \mathcal{I}_{\tau, n}^{\text{G}} f(t_{n, i}^{\text{G}}) & = f(t_{n, i}^{\text{G}}), \quad & i & = 1, \ldots, k, \\
        \mathcal{I}_{\tau, n}^{\text{G}, 0} & : C(\overline{I}_{n}; B) \to \mathbb{P}_{k}(\overline{I}_{n}; B), \qquad &
        \mathcal{I}_{\tau, n}^{\text{G}, 0} f(t_{n, i}^{\text{G}, 0}) & = f(t_{n, i}^{\text{G}, 0}), \quad & i & = 0, \ldots, k,
    \end{alignat*}
    where $ B $ is a Banach space, $ t_{n, 0}^{\text{G}, 0} := t_{n-1} $ and $ t_{n, i}^{\text{G}, 0} := t_{n, i}^{\text{G}} $ for $ i = 1, \ldots, k $.
    With these local interpolation operators at hand, we construct the global interpolation operators
    \begin{alignat*}{3}
        \mathcal{I}_{\tau}^{\text{GL}} & : C(\overline{I}; B) \to \mathbb{X}_{\tau}^{k}(B), \qquad &
        (\mathcal{I}_{\tau}^{\text{GL}} f)\big|_{I_{n}} & = \mathcal{I}_{\tau, n}^{\text{GL}}(f\big|_{I_{n}}), \quad & n & = 1, \ldots, N, \\
        \mathcal{I}_{\tau}^{\text{G}} & : C(\overline{I}; B) \to \mathbb{Y}_{\tau}^{k-1}(B), \qquad &
        (\mathcal{I}_{\tau}^{\text{G}} f)\big|_{I_{n}} & = \mathcal{I}_{\tau, n}^{\text{G}}(f\big|_{I_{n}}), \quad & n & = 1, \ldots, N, \\
        \mathcal{I}_{\tau}^{\text{G}, 0} & : C(\overline{I}; B) \to \mathbb{Y}_{\tau}^{k}(B), \qquad &
        (\mathcal{I}_{\tau}^{\text{G}, 0} f)\big|_{I_{n}} & = \mathcal{I}_{\tau, n}^{\text{G}, 0}(f\big|_{I_{n}}), \quad & n & = 1, \ldots, N.
    \end{alignat*}
    Next, we define the Lagrange interpolation polynomials $ \{L_{n, i}^{\text{G}}\}_{i=1}^{k} $ and $ \{L_{n, i}^{\text{G}, 0}\}_{i=0}^{k} $ corresponding to the points $ \{t_{n, i}^{\text{G}}\}_{i=1}^{k} $ and $ \{t_{n, i}^{\text{G}, 0}\}_{i=0}^{k} $.
    Recall that on each subinterval $ I_{n} $, the Lagrange interpolation operator $ \mathcal{I}_{\tau}^{\text{GL}} $ fulfills
    \begin{align}
        \label{eq:interpolation_error}
        \norm{f - \mathcal{I}_{\tau}^{\text{GL}} f}_{L^{s}(I_{n}; L^{2})}
        & \leq c \tau_{n}^{k+1} \norm{\partial_{t}^{k+1} f}_{L^{s}(I_{n}; L^{2})},
    \end{align}
    with $ s \in \{2, \infty\} $, see, e.g., \cite{Howell1991Deri}.
    In addition, we define the projection $ \Pi_{\tau}^{k-1}: L^{2}(I; L^{2}(\Omega)) \to \mathbb{Y}_{\tau}^{k-1}(L^{2}(\Omega)) $
    with $ \Pi_{\tau}^{k-1} f\big|_{I_{n}} \in \mathbb{P}_{k-1}(I_{n}; L^{2}(\Omega)) $ via the identity
    \begin{gather}
        \label{eq:local_L2_projection}
        \int_{I_{n}} \scalar{\Pi_{\tau}^{k-1} f}{\varphi} \mathrm{d}t = \int_{I_{n}} \scalar{f}{\varphi} \mathrm{d}t \qquad \forall \varphi \in \mathbb{P}_{k-1}(I_{n}; L^{2}(\Omega)) .
    \end{gather}
    Then, the following lemma holds true.
    
    \begin{lemma}[{\cite{Bause2024Conv}}]
        \label{lem:local_L2_projection}
        For all $ f \in \mathbb{P}_{k}(I_{n}; L^{2}(\Omega)) $ and $ n = 1, \ldots, N $, we have
        \begin{alignat*}{2}
            \Pi_{\tau}^{k-1} f(t) & = \mathcal{I}_{\tau, n}^{\textnormal{G}} f(t) & \qquad & \forall t \in I_{n}, \\
            \Pi_{\tau}^{k-1} f(t_{n, i}^{\textnormal{G}}) & = f(t_{n, i}^{\textnormal{G}}), & \qquad & i = 1, \ldots, k.
        \end{alignat*}
    \end{lemma}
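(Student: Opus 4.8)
The plan is to exploit the uniqueness of the $L^2$-orthogonal projection defined by~\eqref{eq:local_L2_projection}. Since the claimed candidate $\mathcal{I}_{\tau, n}^{\text{G}} f$ already lies in $\mathbb{P}_{k-1}(I_{n}; L^{2}(\Omega))$, which is the range of $\Pi_{\tau}^{k-1}$ on $I_n$, it suffices to verify that it satisfies the defining variational identity, that is, that
\begin{align*}
    \int_{I_{n}} \scalar{f - \mathcal{I}_{\tau, n}^{\text{G}} f}{\varphi} \, \mathrm{d}t = 0 \qquad \forall \varphi \in \mathbb{P}_{k-1}(I_{n}; L^{2}(\Omega)).
\end{align*}
Once this is shown, the first identity of the lemma follows from the uniqueness of the local $L^2(I_n; L^2(\Omega))$-orthogonal projection, and the second identity is then immediate from the interpolation property $\mathcal{I}_{\tau, n}^{\text{G}} f(t_{n, i}^{\text{G}}) = f(t_{n, i}^{\text{G}})$ built into the definition of $\mathcal{I}_{\tau, n}^{\text{G}}$.

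First I would set $h := f - \mathcal{I}_{\tau, n}^{\text{G}} f$. By construction $h \in \mathbb{P}_{k}(I_{n}; L^{2}(\Omega))$, and since $\mathcal{I}_{\tau, n}^{\text{G}}$ interpolates $f$ at the $k$ Gauss points, $h$ vanishes at each of them, i.e. $h(t_{n, i}^{\text{G}}) = \bm{0}$ in $L^{2}(\Omega)$ for $i = 1, \ldots, k$. Next I fix $\varphi \in \mathbb{P}_{k-1}(I_{n}; L^{2}(\Omega))$ and consider the scalar map $t \mapsto \scalar{h(t)}{\varphi(t)}$. Expanding $h$ and $\varphi$ in the time monomials with $L^{2}(\Omega)$-valued coefficients and using the bilinearity of $\scalar{\cdot}{\cdot}$ shows that this map is a genuine scalar polynomial in $t$ of degree at most $k + (k-1) = 2k-1$.

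The crucial step is now to invoke the exactness of the $k$-point Gauss quadrature~\eqref{eq:quadrature_G} for polynomials of degree $2k-1$. Applying it to the integrand above yields
\begin{align*}
    \int_{I_{n}} \scalar{h}{\varphi} \, \mathrm{d}t = Q_{n}^{\text{G}}\big(\scalar{h}{\varphi}\big) = \tau_{n} \sum_{i = 1}^{k} \hat{\omega}_{i}^{\text{G}} \scalar{h(t_{n, i}^{\text{G}})}{\varphi(t_{n, i}^{\text{G}})} = 0,
\end{align*}
because $h$ vanishes at every quadrature node. This establishes the variational identity and hence $\Pi_{\tau}^{k-1} f = \mathcal{I}_{\tau, n}^{\text{G}} f$ on $I_{n}$; evaluating this identity at the Gauss points and using the interpolation property gives the second claim.

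The only point requiring care -- and the one I would flag as the main, if modest, obstacle -- is the passage from the scalar to the $L^{2}(\Omega)$-valued setting: one must confirm that the degree count and the quadrature exactness, both classical for scalar polynomials, carry over to the inner-product function $\scalar{h}{\varphi}$. This is guaranteed precisely because $\scalar{\cdot}{\cdot}$ is bilinear, so that after expanding in time monomials the spatial factors collapse into scalar coefficients while the temporal polynomial degree is left unchanged; no spatial regularity beyond $f(t), \varphi(t) \in L^{2}(\Omega)$ is needed. Everything else is the standard identification of the Gauss interpolant with the $L^2$-projection on polynomial spaces.
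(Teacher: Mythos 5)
Your proof is correct. The paper itself does not prove this lemma but defers to the citation~\cite{Bause2024Conv}; your argument---verifying that $\mathcal{I}_{\tau,n}^{\text{G}} f$ satisfies the defining identity \eqref{eq:local_L2_projection} by noting that $t \mapsto \scalar{f - \mathcal{I}_{\tau,n}^{\text{G}} f}{\varphi}$ is a scalar polynomial of degree at most $2k-1$ annihilated by the $k$-point Gauss rule at its nodes, then invoking uniqueness of the orthogonal projection---is precisely the standard argument used in the cited reference, and your care about the vector-valued-to-scalar reduction via bilinearity is exactly the right point to check.
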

    
    Besides that, we will make use of the following $L^{\infty} $-$ L^{2}$ inverse estimate.
    
    \begin{lemma}[{\cite{Karakashian2005Conv}}]
        \label{lem:Linfty_L2_inverse_inequality}
        For all $ f \in \mathbb{P}_{k}(I_{n}; \mathbb{R}) $ and $ n = 1, \ldots, N $, there holds that
        \begin{align}
            \label{eq:Linfty_L2_inverse_inequality}
            \max_{t \in I_{n}} \abs{f(t)} \leq c \tau_{n}^{-1/2} \norm{f}_{L^{2}(I_{n}; \mathbb{R})}.
        \end{align}
    \end{lemma}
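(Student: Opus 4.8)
The plan is to reduce the estimate to the fixed reference interval $\hat{I} = [0,1]$ by means of the affine transformation $T_n(\hat{t}) = \tau_n \hat{t} + t_{n-1}$ already introduced, and then to exploit the equivalence of norms on the finite-dimensional polynomial space $\mathbb{P}_{k}(\hat{I}; \mathbb{R})$. The whole difficulty is bookkeeping of the scaling factors; the estimate itself is a soft consequence of finite dimensionality.

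First I would pull back $f \in \mathbb{P}_{k}(I_{n}; \mathbb{R})$ to the reference interval by setting $\hat{f} := f \circ T_{n}$, which is again a polynomial of degree at most $k$, so $\hat{f} \in \mathbb{P}_{k}(\hat{I}; \mathbb{R})$. Since $T_{n}$ is a bijection between $\overline{I}_{n}$ and $\hat{I}$, the maximum is invariant, that is $\max_{t \in I_{n}} \abs{f(t)} = \max_{\hat{t} \in \hat{I}} \abs{\hat{f}(\hat{t})}$. The change of variables $t = T_{n}(\hat{t})$ with $\mathrm{d}t = \tau_{n}\, \mathrm{d}\hat{t}$ yields $\norm{f}_{L^{2}(I_{n}; \mathbb{R})}^{2} = \tau_{n} \norm{\hat{f}}_{L^{2}(\hat{I}; \mathbb{R})}^{2}$, and hence $\norm{\hat{f}}_{L^{2}(\hat{I}; \mathbb{R})} = \tau_{n}^{-1/2} \norm{f}_{L^{2}(I_{n}; \mathbb{R})}$.

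The key step is that on the finite-dimensional space $\mathbb{P}_{k}(\hat{I}; \mathbb{R})$, whose dimension $k+1$ depends only on $k$ and not on $n$, all norms are equivalent. In particular there is a constant $c = c(k)$, independent of $n$, such that $\max_{\hat{t} \in \hat{I}} \abs{\hat{g}(\hat{t})} \leq c \norm{\hat{g}}_{L^{2}(\hat{I}; \mathbb{R})}$ for all $\hat{g} \in \mathbb{P}_{k}(\hat{I}; \mathbb{R})$. Applying this to $\hat{g} = \hat{f}$ and combining with the two identities above gives
\[
    \max_{t \in I_{n}} \abs{f(t)} = \max_{\hat{t} \in \hat{I}} \abs{\hat{f}(\hat{t})} \leq c \norm{\hat{f}}_{L^{2}(\hat{I}; \mathbb{R})} = c \tau_{n}^{-1/2} \norm{f}_{L^{2}(I_{n}; \mathbb{R})},
\]
which is exactly the claimed inverse inequality.

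The only point deserving care, and the very reason for carrying out the scaling argument rather than working on $I_{n}$ directly, is the uniformity of the constant in $n$: the norm-equivalence constant must be extracted on the fixed reference interval so that it depends solely on the polynomial degree $k$, while the entire dependence on the interval length $\tau_{n}$ is made explicit through the Jacobian factor $\tau_{n}^{-1/2}$. The existence of the equivalence constant itself is immediate from the finite dimensionality of $\mathbb{P}_{k}(\hat{I}; \mathbb{R})$; if an explicit value were desired, one could expand $\hat{f}$ in an $L^{2}(\hat{I})$-orthonormal basis of (shifted) Legendre polynomials and bound the pointwise sup of the basis functions against the $\ell^{2}$ norm of the coefficients by Cauchy--Schwarz.
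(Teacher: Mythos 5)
Your proof is correct. The paper itself gives no proof of this lemma---it is quoted from the reference [Karakashian \& Makridakis]---and your argument (pull back to the reference interval $\hat{I}$ via $T_{n}$, use equivalence of the $L^{\infty}$ and $L^{2}$ norms on the finite-dimensional space $\mathbb{P}_{k}(\hat{I};\mathbb{R})$, and make the $\tau_{n}^{-1/2}$ dependence explicit through the Jacobian) is precisely the standard scaling proof of such inverse estimates, with the constant correctly depending only on $k$. The only cosmetic remark is that since $I_{n}=(t_{n-1},t_{n}]$ is half-open, $\max_{t\in I_{n}}$ should be read as a supremum, which by continuity of $f$ coincides with the maximum over $\overline{I}_{n}$, so your identification with the maximum over $\hat{I}$ is harmless.
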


    \section{Variational formulations}
    \label{sec:dg_problem_formulations}
    To begin with, we introduce the variable $ \bm{v} := \partial_{t} \bm{u} $ and rewrite problem~\eqref{eq:dg_dyn_biot} as
    \begin{subequations}
        \label{eq:dg_dyn_biot_velo}
        \begin{alignat}{2}
            \bar{\rho} \partial_{t} \bm{v} - 2 \mu \mathrm{div}(\bm{\epsilon}(\bm{u})) - \lambda \nabla \mathrm{div}(\bm{u}) + \alpha \nabla p + \rho_{f} \partial_{t} \bm{w} & = \bm{f} & \qquad & \text{in $ \Omega \times \left(0, T\right] $}, \label{eq:dg_dyn_biot_velo-1} \\
            \partial_{t} \bm{u} - \bm{v} & = \bm{0} & \qquad & \text{in $ \Omega \times \left(0, T\right] $}, \label{eq:dg_dyn_biot_velo-2} \\
            \rho_{f} \partial_{t} \bm{v} + \rho_{w} \partial_{t} \bm{w} + \bm{K}^{-1} \bm{w} + \nabla p & = \bm{0} & \qquad & \text{in $ \Omega \times \left(0, T\right] $}, \label{eq:dg_dyn_biot_velo-3} \\
            s_{0} \partial_{t} p + \alpha \mathrm{div}(\partial_{t} \bm{u}) + \mathrm{div}(\bm{w}) & = g & \qquad & \text{in $ \Omega \times \left(0, T\right] $}. \label{eq:dg_dyn_biot_velo-4}
        \end{alignat}
    \end{subequations}
    In the following, we present a time-continuous as well as a time-discrete variational formulation of problem~\eqref{eq:dg_dyn_biot_velo}.
    The latter will be obtained from application of a continuous Galerkin-Petrov method in time. Strongly conservative space discretization we
    achieve by combining mixed and $H(\mathrm{div})$-conforming standard and discontinuous Galerkin methods.

    \subsection{Space-time variational problem}
    In order to give a space-time variational formulation of problem~\eqref{eq:dg_dyn_biot_velo}, we define the following spaces
    \begin{align*}
        \begin{aligned}
            \bm{U} = \bm{V} & := \bm{H}_{0}^{1}(\Omega)
            = \{ \bm{u} \in \bm{H}^{1}(\Omega) \mid \text{$ \bm{u} = \bm{0} $ on $ \partial \Omega $} \}, \\
            \bm{W} & := \bm{H}_{0}(\mathrm{div}, \Omega)
            = \{ \bm{w} \in \bm{H}(\mathrm{div}, \Omega) \mid \text{$ \bm{w} \cdot \bm{n} = 0 $ on $ \partial \Omega $} \}, \\
            P & := L_{0}^{2}(\Omega)
            = \{ p \in L^{2}(\Omega) \mid \int_{\Omega} p \mathrm{d}\mathbf{x} = 0 \}, \\
            \bm{X} & := \bm{U} \times \bm{V} \times \bm{W} \times P,
        \end{aligned}
    \end{align*}
    and bilinear form $ a(\bm{u}, \bm{\varphi}^{\bm{u}}) := 2 \mu \scalar{\bm{\epsilon}(\bm{u})}{\bm{\epsilon}(\bm{\varphi}^{\bm{u}})} + \lambda \scalar{\mathrm{div}(\bm{u})}{\mathrm{div}(\bm{\varphi}^{\bm{u}})} $ for $ \bm{u}, \bm{\varphi}^{\bm{u}} \in \bm{U} $.
    Next, we pose the time-continuous weak problem:
    
    \begin{problem}
        \label{pr:dg_dyn_biot_cont_weak_form}
        Let $ \bm{f} \in L^{2}(I; \bm{L}^{2}(\Omega)) $, $ g \in L^{2}(I; L^{2}(\Omega)) $ and $ (\bm{u}_{0}, \bm{v}_{0}, \bm{w}_{0}, p_{0}) \in \bm{X} $ be given.
        Find $ (\bm{u}, \bm{v}, \bm{w}, p) \in L^{2}(I; \bm{X}) $ such that
        \begin{align*}
            \bm{u}(\cdot, 0) & = \bm{u}_{0}, &
            \bm{v}(\cdot, 0) & = \bm{v}_{0}, &
            \bm{w}(\cdot, 0) & = \bm{w}_{0}, &
            p(\cdot, 0) & = p_{0},
        \end{align*}
        and
        \begin{align*}
            \int_{I} \scalar{\bar{\rho} \partial_{t} \bm{v} + \rho_{f} \partial_{t} \bm{w}}{\bm{\varphi}^{\bm{u}}}
            + a(\bm{u}, \bm{\varphi}^{\bm{u}})
            - \scalar{\alpha p}{\mathrm{div}(\bm{\varphi}^{\bm{u}})} \mathrm{d}t
            & = \int_{I} \scalar{\bm{f}}{\bm{\varphi}^{\bm{u}}} \mathrm{d}t, \\
            \int_{I} \scalar{\partial_{t} \bm{u} - \bm{v}}{\bm{\varphi}^{\bm{v}}} \mathrm{d}t
            & = 0, \\
            \int_{I} \scalar{\rho_{f} \partial_{t} \bm{v} + \rho_{w} \partial_{t} \bm{w} + \bm{K}^{-1} \bm{w}}{\bm{\varphi}^{\bm{w}}}
            - \scalar{p}{\mathrm{div}(\bm{\varphi}^{\bm{w}})} \mathrm{d}t
            & = 0, \\
            \int_{I} \scalar{s_{0} \partial_{t} p + \alpha \mathrm{div}(\partial_{t} \bm{u}) + \mathrm{div}(\bm{w})}{\varphi^{p}} \mathrm{d}t
            & = \int_{I} \scalar{g}{\varphi^{p}} \mathrm{d}t,
        \end{align*}
        for all $ (\bm{\varphi}^{\bm{u}}, \bm{\varphi}^{\bm{v}}, \bm{\varphi}^{\bm{w}}, \varphi^{p}) \in L^{2}(I; \bm{X}) $.
    \end{problem}
    The well-posedness of Problem~\ref{pr:dg_dyn_biot_cont_weak_form} is proven in \cite{Mielke2013Homo}.

    \subsection{Space-time discretization}
    \label{subsec:dg_discrete_formulation}
    We utilize a continuous Galerkin-Petrov (cGP($ k $)) method of degree $k \in \mathbb{N}$ in time for problem~\eqref{eq:dg_dyn_biot_velo},
    for details see,~e.g.,~\cite{Bause2024Conv}.
    With this time stepping scheme, we end up with a globally continuous solution in time.
    For the spatial discretization, we employ a discontinuous Galerkin method for the displacement $ \bm{u} $.
    To this end, we define the following discrete spaces
    \begin{align*}
        \bm{U}_{h} = \bm{V}_{h} = \bm{W}_{h} & := \{ \bm{u}_{h} \in \bm{H}_{0}(\mathrm{div}, \Omega) \mid \bm{u}_{h}\big|_{K} \in \bm{U}(K) \ \forall K \in \mathcal{T}_{h} \}, \\
        P_{h} & := \{ p_{h} \in L_{0}^{2}(\Omega) \mid p_{h}\big|_{K} \in P(K) \ \forall K \in \mathcal{T}_{h} \}, \\
        \bm{X}_{h} & := \bm{U}_{h} \times \bm{V}_{h} \times \bm{W}_{h} \times P_{h},
    \end{align*}
    where $ \mathcal{T}_{h} $ is a shape-regular triangulation and $ \bm{U}(K) \times P(K) $ are chosen to be $ \text{BDM}_{\ell+1}(K) \times \mathbb{P}_{\ell}(K) $ for $ \ell \geq 0 $.
    The space $ \text{BDM}_{\ell}(K) $ denotes the Brezzi-Douglas-Marini space of degree $ \ell \in \mathbb{N} $. 
    To ensure strong, i.e., pointwise, mass conservation, one has to satisfy the conditions $ \mathrm{div}(\bm{U}_{h}) \subseteq P_{h} $,
    and $ \mathrm{div}(\bm{W}_{h}) \subseteq P_{h} $, cf.~\cite{Cockburn2007Anot}, which hold for our choice(s) of spaces.
   
    Let $ \{\!\!\{\cdot\}\!\!\} $ be the average and $ [\![ \cdot ]\!] $ the jump defined by
    \begin{align*}
        \{\!\!\{\bm{\tau}\}\!\!\} & := \frac{1}{2} \left( \bm{\tau}\big|_{\partial K_{1} \cap e} \bm{n}_{1} - \bm{\tau}\big|_{\partial K_{2} \cap e} \bm{n}_{2} \right), &
        [\![ \bm{y} ]\!] & := \bm{y}\big|_{\partial K_{1} \cap e} - \bm{y}\big|_{\partial K_{2} \cap e},
    \end{align*}
    for any interior facet $ e := K_{1} \cap K_{2} $ and
    \begin{align*}
        \{\!\!\{\bm{\tau}\}\!\!\} & := \bm{\tau}\big|_{e} \bm{n}, &
        [\![ \bm{y} ]\!] & := \bm{y}\big|_{e},
    \end{align*}
    for any boundary facet $ e $, where $ \bm{\tau} $ denotes a matrix-valued and $ \bm{y} $ a vector-valued function, see e.g.~\cite{Hong2018Para}.
    Additionally, we specify the tangential component by $ \mathrm{tang}(\bm{z}) := \bm{z} - (\bm{z} \cdot \bm{n}) \bm{n} $.
    Next, we introduce the following mesh-dependent bilinear form
    \begin{align*}
        a_{h}(\bm{u}_{\tau, h}, \bm{\varphi}_{\tau, h}^{\bm{u}}) & := \sum_{K \in \mathcal{T}_{h}} 2 \mu \int_{K} \bm{\epsilon}(\bm{u}_{\tau, h}) : \bm{\epsilon}(\bm{\varphi}_{\tau, h}^{\bm{u}}) \mathrm{d}\mathbf{x}
        - \sum_{e \in \mathcal{E}_{h}} 2 \mu \int_{e} \{\!\!\{\bm{\epsilon}(\bm{u}_{\tau, h})\}\!\!\} \cdot [\![ \mathrm{tang}(\bm{\varphi}_{\tau, h}^{\bm{u}}) ]\!] \mathrm{d}s \\
        & \quad - \sum_{e \in \mathcal{E}_{h}} 2 \mu \int_{e} \{\!\!\{\bm{\epsilon}(\bm{\varphi}_{\tau, h}^{\bm{u}})\}\!\!\} \cdot [\![ \mathrm{tang}(\bm{u}_{\tau, h}) ]\!] \mathrm{d}s
        + \sum_{e \in \mathcal{E}_{h}} 2 \mu \eta h_{e}^{-1} \int_{e} [\![ \mathrm{tang}(\bm{u}_{\tau, h}) ]\!] \cdot [\![ \mathrm{tang}(\bm{\varphi}_{\tau, h}^{\bm{u}}) ]\!] \mathrm{d}s \\
        & \quad + \lambda \scalar{\mathrm{div}(\bm{u}_{\tau, h})}{\mathrm{div}(\bm{\varphi}_{\tau, h}^{\bm{u}})},
    \end{align*}
    where $ h_{e} $ is the diameter of the facet $ e $, the set $ \mathcal{E}_{h} $ contains all facets, and $ \eta $ is a stabilization parameter, cf.~\cite{Hong2018Para}.
    The proposed space-time finite element method then reads:
    
    \begin{problem}
        \label{pr:dg_dyn_biot_disc_weak_form}
        Let $ k, \ell \in \mathbb{N} $ and
        \begin{align*}
            \bm{u}_{\tau, h}^{n-1} & := \bm{u}_{\tau, h}\big|_{I_{n-1}}(t_{n-1}), &
            \bm{v}_{\tau, h}^{n-1} & := \bm{v}_{\tau, h}\big|_{I_{n-1}}(t_{n-1}), &
            \bm{w}_{\tau, h}^{n-1} & := \bm{w}_{\tau, h}\big|_{I_{n-1}}(t_{n-1}), &
            p_{\tau, h}^{n-1} & := p_{\tau, h}\big|_{I_{n-1}}(t_{n-1}),
        \end{align*}
        be given where
        $ (\bm{u}_{\tau, h}^{0}, \bm{v}_{\tau, h}^{0}, \bm{w}_{\tau, h}^{0}, p_{\tau, h}^{0}) := (\bm{u}_{0, h}, \bm{v}_{0, h}, \bm{w}_{0, h}, p_{0, h}) $.
        Find $ (\bm{u}_{\tau, h}, \bm{v}_{\tau, h}, \bm{w}_{\tau, h}, p_{\tau, h}) \in \mathbb{P}_{k}(I_{n}; \bm{X}_{h}) $ such that
        $ (\bm{u}_{\tau, h}(t_{n-1}), \bm{v}_{\tau, h}(t_{n-1}), \bm{w}_{\tau, h}(t_{n-1}), p_{\tau, h}(t_{n-1})) = (\bm{u}_{\tau, h}^{n-1}, \bm{v}_{\tau, h}^{n-1}, \bm{w}_{\tau, h}^{n-1}, p_{\tau, h}^{n-1}) $
        and
        \begin{align*}
            \int_{I_{n}} \scalar{\bar{\rho} \partial_{t} \bm{v}_{\tau, h} + \rho_{f} \partial_{t} \bm{w}_{\tau, h}}{\bm{\varphi}_{\tau, h}^{\bm{u}}}
            + a_{h}(\bm{u}_{\tau, h}, \bm{\varphi}_{\tau, h}^{\bm{u}})
            - \scalar{\alpha p_{\tau, h}}{\mathrm{div}(\bm{\varphi}_{\tau, h}^{\bm{u}})} \mathrm{d}t
            & = \int_{I_{n}} \scalar{\pmb{\mathcal{I}}_{\tau}^{\text{GL}} \bm{f}}{\bm{\varphi}_{\tau, h}^{\bm{u}}} \mathrm{d}t, \\
            \int_{I_{n}} \scalar{\partial_{t} \bm{u}_{\tau, h} - \bm{v}_{\tau, h}}{\bm{\varphi}_{\tau, h}^{\bm{v}}} \mathrm{d}t
            & = 0, \\
            \int_{I_{n}} \scalar{\rho_{f} \partial_{t} \bm{v}_{\tau, h} + \rho_{w} \partial_{t} \bm{w}_{\tau, h} + \bm{K}^{-1} \bm{w}_{\tau, h}}{\bm{\varphi}_{\tau, h}^{\bm{w}}}
            - \scalar{p_{\tau, h}}{\mathrm{div}(\bm{\varphi}_{\tau, h}^{\bm{w}})} \mathrm{d}t
            & = 0, \\
            \int_{I_{n}} \scalar{s_{0} \partial_{t} p_{\tau, h} + \alpha \mathrm{div}(\partial_{t} \bm{u}_{\tau, h}) + \mathrm{div}(\bm{w}_{\tau, h})}{\varphi_{\tau, h}^{p}} \mathrm{d}t
            & = \int_{I_{n}} \scalar{\mathcal{I}_{\tau}^{\text{GL}} g}{\varphi_{\tau, h}^{p}} \mathrm{d}t,
        \end{align*}
        for all $ (\bm{\varphi}_{\tau, h}^{\bm{u}}, \bm{\varphi}_{\tau, h}^{\bm{v}}, \bm{\varphi}_{\tau, h}^{\bm{w}}, \varphi_{\tau, h}^{p}) \in \mathbb{P}_{k-1}(I_{n}; \bm{X}_{h}) $.
    \end{problem}

    \section{Error analysis}
    \label{sec:dg_error_analysis}
    In this section, we derive estimates for the discretization error of the space-time variational method presented in Problem~\ref{pr:dg_dyn_biot_disc_weak_form}.
    The presented analysis is guided by the works~\cite{Karakashian2005Conv, Bause2024Conv} but thereby details on the considered dynamic Biot model as well
    as its strongly conservative space discretization involving a mixed method for the flow subproblem and $H(\mathrm{div})$-conforming discontinuous Galerkin
    method for the mechanics subproblem.
    
    \subsection{Norms and projection operators}
    For the error analysis, we define the following norms
    \begin{align*}
        \norm{\bm{u}_{h}}_{\text{DG}}^{2} & := \sum_{K \in \mathcal{T}_{h}} \norm{\bm{\epsilon}(\bm{u}_{h})}_{\bm{L}^{2}(K)}^{2}
        + \sum_{e \in \mathcal{E}_{h}} h_{e}^{-1} \norm{[\![ \mathrm{tang}(\bm{u}_{h}) ]\!]}_{\bm{L}^{2}(e)}^{2}
        + \sum_{K \in \mathcal{T}_{h}} h_{K}^{2} \abs{\bm{u}_{h}}_{\bm{H}^{2}(K)}^{2}, \\
        \norm{\bm{u}_{h}}_{\bm{U}_{h}}^{2} & := \norm{\bm{u}_{h}}_{\text{DG}}^{2} + \lambda \norm{\mathrm{div}(\bm{u}_{h})}_{L^{2}(\Omega)}^{2},
    \end{align*}
    where $ h_{K} $ represents the mesh size of cell $ K $.
    According to \cite{Hong2018Para}, the bilinear form $ a_{h}(\cdot, \cdot) $ satisfies
    \begin{alignat*}{2}
        \abs{a_{h}(\bm{u}_{h}, \bm{\varphi}_{h}^{\bm{u}})}
        & \leq c \norm{\bm{u}_{h} }_{\bm{U}_{h}} \norm{\bm{\varphi}_{h}^{\bm{u}}}_{\bm{U}_{h}}
        & \qquad & \forall \bm{u}_{h} , \bm{\varphi}_{h}^{\bm{u}} \in \bm{H}^{2}(\mathcal{T}_{h}), \\
        c \norm{\bm{u}_{h} }_{\bm{U}_{h}}^{2}
        & \leq a_{h}(\bm{u}_{h} , \bm{u}_{h} )
        & \qquad & \forall \bm{u}_{h}  \in \bm{U}_{h}.
    \end{alignat*}
    Thereby, we have used $ \bm{H}^{2}(\mathcal{T}_{h}) := \{ \bm{u} \in \bm{L}^{2}(\Omega) \mid \bm{u}\big|_{K} \in \bm{H}^{2}(K) \ \forall K \in \mathcal{T}_{h} \} $.
    We define the following projection operators
    \begin{subequations}
        \begin{alignat}{3}
            \bm{\mathcal{R}}_{h} & : \bm{H}(\mathrm{div}, \Omega) \to \bm{U}_{h}, \qquad &
            a_{h}(\bm{\mathcal{R}}_{h} \bm{y}, \bm{\varphi}_{h}^{\bm{u}}) & = a_{h}(\bm{y}, \bm{\varphi}_{h}^{\bm{u}}) & \quad & \forall \bm{\varphi}_{h}^{\bm{u}} \in \bm{U}_{h}, \label{eq:dg_projection_Rh} \\
            \bm{\mathcal{A}}_{h} & : \bm{H}(\mathrm{div}, \Omega) \to \bm{U}_{h}, \qquad &
            \scalar{\bm{\mathcal{A}}_{h} \bm{y}}{\bm{\varphi}_{h}^{\bm{u}}} & = a_{h}(\bm{y}, \bm{\varphi}_{h}^{\bm{u}}) & \quad & \forall \bm{\varphi}_{h}^{\bm{u}} \in \bm{U}_{h}, \label{eq:dg_projection_Ah} \\
            \bm{\mathcal{S}}_{h} & : \bm{W} \to \bm{W}_{h}, \qquad &
            \scalar{\mathrm{div}(\bm{\mathcal{S}}_{h} \bm{y})}{\varphi_{h}^{p}} & = \scalar{\mathrm{div}(\bm{y})}{\varphi_{h}^{p}} & \qquad & \forall \varphi_{h}^{p} \in P_{h}, \label{eq:dg_projection_Sh} \\
            \mathcal{P}_{h} & : P \to P_{h}, \qquad &
            \scalar{\mathcal{P}_{h} y}{\varphi_{h}^{p}} & = \scalar{y}{\varphi_{h}^{p}} & \quad & \forall \varphi_{h}^{p} \in P_{h}. \label{eq:dg_projection_Ph}
        \end{alignat}
    \end{subequations}
    
    Some error estimates for these projection operators are summarized in the following lemma, cf.~\cite{Hong2016robust,Boffi2013Mixed,Monk2003finite}.
    
    \begin{lemma}
        \label{lem:dg_projection_errors}
        The following projection estimates are fulfilled:
        \begin{align*}
            \norm{\bm{v} - \bm{\mathcal{R}}_{h} \bm{v}}_{\bm{L}^{2}(\Omega)}
            & \leq c h^{\ell+1} \norm{\bm{v}}_{\bm{H}^{\ell+2}(\Omega)}, &
            \norm{\bm{w} - \bm{\mathcal{S}}_{h} \bm{w}}_{\bm{L}^{2}(\Omega)}
            & \leq c h^{\ell+1} \norm{\bm{w}}_{\bm{H}^{\ell+1}(\Omega)}, \\
            \norm{\bm{v} - \bm{\mathcal{R}}_{h} \bm{v}}_{\bm{U}_{h}}
            & \leq c h^{\ell+1} \norm{\bm{v}}_{\bm{H}^{\ell+2}(\Omega)}, &
            \norm{p - \mathcal{P}_{h} p}_{L^{2}(\Omega)}
            & \leq c h^{\ell + 1} \norm{p}_{H^{\ell+1}(\Omega)}.
        \end{align*}
    \end{lemma}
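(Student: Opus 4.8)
The four bounds split into two groups that can be handled independently: the estimates for the mixed interpolation operators $\mathcal{P}_h$ and $\bm{\mathcal{S}}_h$, which are classical, and the two estimates for the DG elliptic projection $\bm{\mathcal{R}}_h$, which follow from a Céa-type best-approximation argument combined with a Poincaré step. First I would dispose of $\mathcal{P}_h$ and $\bm{\mathcal{S}}_h$. Since $P_h$ contains cell-by-cell all polynomials of degree $\ell$, the operator defined in \eqref{eq:dg_projection_Ph} is the $L^2$-orthogonal projection onto $P_h$ (it preserves cellwise means, so the zero-mean constraint is automatic); bounding it against a local polynomial interpolant and invoking the Bramble--Hilbert lemma on the shape-regular mesh $\mathcal{T}_h$ yields $\norm{p-\mathcal{P}_h p}_{L^2(\Omega)}\le c h^{\ell+1}\norm{p}_{H^{\ell+1}(\Omega)}$. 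For $\bm{\mathcal{S}}_h$ I would realize it as the canonical $\mathrm{BDM}_{\ell+1}$ interpolation operator, for which the defining relation \eqref{eq:dg_projection_Sh} is exactly the commuting-diagram property $\mathrm{div}(\bm{\mathcal{S}}_h\bm{w})=\mathcal{P}_h\,\mathrm{div}(\bm{w})$; the $L^2$ interpolation estimate of order $h^{\ell+1}$ is then the standard (here suboptimal, with regularity index $\ell+1$) BDM bound taken from \cite{Boffi2013Mixed,Monk2003finite}.

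For the energy bound on $\bm{\mathcal{R}}_h$, I would start from the coercivity of $a_h$ on $\bm{U}_h$ and its boundedness on $\bm{H}^2(\mathcal{T}_h)$ stated above, together with the Galerkin orthogonality $a_h(\bm{v}-\bm{\mathcal{R}}_h\bm{v},\bm{\varphi}_h^{\bm{u}})=0$ for all $\bm{\varphi}_h^{\bm{u}}\in\bm{U}_h$, which is immediate from \eqref{eq:dg_projection_Rh}. A standard Céa argument then gives $\norm{\bm{v}-\bm{\mathcal{R}}_h\bm{v}}_{\bm{U}_h}\le c\,\inf_{\bm{\varphi}_h\in\bm{U}_h}\norm{\bm{v}-\bm{\varphi}_h}_{\bm{U}_h}$. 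Choosing $\bm{\varphi}_h=\bm{\mathcal{S}}_h\bm{v}$, the three contributions to $\norm{\cdot}_{\text{DG}}$---the elementwise strain, the $h_e^{-1}$-scaled tangential jumps, and the $h_K^2$-weighted $\bm{H}^2$-seminorm---are controlled by the standard $H(\mathrm{div})$-conforming DG interpolation estimates of order $h^{\ell+1}$, which is where the regularity requirement $\bm{v}\in\bm{H}^{\ell+2}(\Omega)$ enters, while the $\lambda\,\norm{\mathrm{div}(\cdot)}_{L^2(\Omega)}^2$ term is reduced via the commuting property to $\mathrm{div}(\bm{v}-\bm{\varphi}_h)=\mathrm{div}(\bm{v})-\mathcal{P}_h\,\mathrm{div}(\bm{v})$ and thus bounded by the $\mathcal{P}_h$ estimate already established, applied to $\mathrm{div}(\bm{v})\in H^{\ell+1}(\Omega)$.

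For the $L^2$ bound on $\bm{\mathcal{R}}_h$ I would not use duality (which, given the symmetric interior-penalty form $a_h$ and full $\bm{H}^2$ elliptic regularity of the adjoint problem, would in fact sharpen the order to $h^{\ell+2}$, but the stated $h^{\ell+1}$ already suffices). Instead, splitting $\bm{v}-\bm{\mathcal{R}}_h\bm{v}=(\bm{v}-\bm{\mathcal{S}}_h\bm{v})+(\bm{\mathcal{S}}_h\bm{v}-\bm{\mathcal{R}}_h\bm{v})$, the first summand is estimated directly by the BDM $L^2$ interpolation bound, and the second, being a discrete function, is estimated by a discrete Poincaré--Korn inequality $\norm{\bm{\varphi}_h}_{\bm{L}^2(\Omega)}\le C\norm{\bm{\varphi}_h}_{\text{DG}}$ on the tangentially penalized space $\bm{U}_h$ followed by the energy bound just proved; the triangle inequality then delivers the claimed $h^{\ell+1}$ rate.

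The main obstacle I anticipate lies entirely in the $\bm{\mathcal{R}}_h$ part. One must verify that the boundedness of $a_h$ may legitimately be invoked on the mixed continuous--discrete pairs $\bm{v}-\bm{\mathcal{R}}_h\bm{v}$ and $\bm{v}-\bm{\mathcal{S}}_h\bm{v}$, which is licensed because $\bm{v}\in\bm{H}^{\ell+2}(\Omega)\subset\bm{H}^2(\Omega)\subset\bm{H}^2(\mathcal{T}_h)$ whenever $\ell\ge 0$; and one must track how the Lamé parameter $\lambda$ enters the constants through the $\lambda\,\norm{\mathrm{div}(\cdot)}_{L^2(\Omega)}^2$ term---it is precisely the commuting-diagram reduction to the $\mathcal{P}_h$ estimate that keeps this term consistent with the $h^{\ell+1}$ rate. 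The remaining nonroutine ingredient is the discrete Poincaré--Korn inequality for the $H(\mathrm{div})$-conforming, tangentially penalized space, which I would establish or cite (cf.~\cite{Hong2018Para}); all other steps are direct applications of the interpolation and projection results already referenced.
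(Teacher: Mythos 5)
Your proposal is correct, but it reaches the two $\bm{\mathcal{R}}_h$ estimates by a different mechanism than the paper. For the energy bound, the paper does not run a Céa argument itself: it cites the parameter-robust quasi-optimality result of~\cite[Theorem~1]{Hong2016robust} for the elasticity system written with the auxiliary variable $p=\lambda\,\mathrm{div}\,\bm{u}$, $p_h=\lambda\,\mathrm{div}\,\bm{u}_h$, and then applies $\mathrm{BDM}_{\ell+1}$ interpolation estimates; your self-contained Strang/Céa argument (Galerkin orthogonality from~\eqref{eq:dg_projection_Rh}, coercivity on $\bm{U}_h$, boundedness on $\bm{H}^2(\mathcal{T}_h)$, and the choice $\bm{\varphi}_h=\bm{\mathcal{S}}_h\bm{v}$ with the commuting-diagram treatment of the $\lambda$-weighted divergence term) proves the same statement from the ingredients already listed in the paper, at the cost of being longer and of leaving the $\lambda$-dependence of the quasi-optimality constant to be tracked by hand, which is exactly what the citation to~\cite{Hong2016robust} packages up. For the $L^2$ bound, the paper is more direct than you are: since $\bm{v}-\bm{\mathcal{R}}_h\bm{v}\in\bm{H}_{0}(\mathrm{div},\Omega)$, it applies the Poincar\'e-type inequality $\norm{\bm{w}}_{L^2(\Omega)}\le c\norm{\bm{w}}_{\text{DG}}\le c\norm{\bm{w}}_{\bm{U}_h}$ to the whole error and concludes immediately from the energy bound, whereas you first split off $\bm{v}-\bm{\mathcal{S}}_h\bm{v}$ and invoke a discrete Poincar\'e--Korn inequality only on $\bm{U}_h$; both are legitimate, and they rely on the same underlying broken Korn--Poincar\'e inequality, yours in a formally weaker (discrete) form but with an extra triangle-inequality step. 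Your side remarks are also consistent with the paper: the $h^{\ell+1}$ rate in $L^2$ is indeed acknowledged there as suboptimal but sufficient, and your identification of $\bm{\mathcal{S}}_h$ with the canonical BDM interpolant (so that~\eqref{eq:dg_projection_Sh} is the commuting-diagram property) and of $\mathcal{P}_h$ with the elementwise $L^2$ projection matches how the paper treats the remaining two estimates via standard $\mathrm{BDM}_{\ell+1}$ and $\mathbb{P}_\ell$ interpolation theory from~\cite{Boffi2013Mixed,Monk2003finite}.
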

    
    \begin{proof}
    The estimate for $\norm{\bm{v} - \bm{\mathcal{R}}_{h} \bm{v}}_{\bm{U}_{h}}$ is obtained by setting $p=\lambda \mathrm{div} \bm{u}$ and
    $p_h=\lambda \mathrm{div} \bm{u}_h$ in~\cite[Theorem~1]{Hong2016robust}, identifying $\bm{u}$ and $\bm{u}_h$
    with $\bm{v}$ and $\bm{\mathcal{R}}_{h} \bm{v}$, respectively, and then using $\text{BDM}_{\ell+1}$ interpolation error estimates.
    The corresponding suboptimal $L^2$ error estimate, which suffices our purpose later on, is a consequence of 
    $$
    \norm{\bm{w}}_{L^2(\Omega)} \le c \norm{\bm{w}}_{\text{DG}} \le c \norm{\bm{w}}_{\bm{U}_{h}}, \quad \forall \bm{w}  \in \bm{H}_{0}(\mathrm{div}, \Omega),
    $$
    which follows from Poincar\'{e} inequality.
    The other estimates follow from standard $\text{BDM}_{\ell+1}(K)$ and $\mathbb{P}_{\ell}(K)$ interpolation error estimates for functions in $H^{\ell+1}(\Omega)$,
    using the properties of the operators $\bm{\mathcal{S}}_{h}$ and $\mathcal{P}_{h}$, cf.~\cite{Boffi2013Mixed,Monk2003finite}.
    \end{proof}
    
    With these projection operators at hand, we can construct an approximation of the solution $ (\bm{u}, \bm{v}, \bm{w}, p) $ based on the ideas of \cite{Karakashian2005Conv, Bause2024Conv}.
    
    \begin{definition}
        \label{def:dg_definition_z}
        For $ t = 0 $, we specify
        \begin{subequations}
            \label{eq:dg_definition_z}
            \begin{align}
                \label{eq:dg_definition_z-1}
                \bm{z}_{1}(0) & := \bm{\mathcal{R}}_{h} \bm{u}_{0}, &
                \bm{z}_{2}(0) & := \bm{\mathcal{R}}_{h} \bm{v}_{0}, &
                \bm{z}_{3}(0) & := \bm{\mathcal{S}}_{h} \bm{w}_{0}, &
                z_{4}(0) & := \mathcal{P}_{h} p_{0},
            \end{align}
            and determine the following polynomials on $ I_{n} $
            \begin{align}
                \label{eq:dg_definition_z-2}
                \begin{aligned}
                    \bm{z}_{1}\big|_{I_{n}} & := \bm{\mathcal{I}}_{\tau}^{\text{GL}}\left( \int_{t_{n-1}}^{t} \bm{z}_{2}(s) \mathrm{d}s + \bm{\mathcal{R}}_{h} \bm{u}(t_{n-1}) \right), & \qquad
                    \bm{z}_{2}\big|_{I_{n}} & := \bm{\mathcal{I}}_{\tau}^{\text{GL}}(\bm{\mathcal{R}}_{h} \partial_{t} \bm{u}), \\
                    \bm{z}_{3}\big|_{I_{n}} & := \bm{\mathcal{I}}_{\tau}^{\text{GL}} (\bm{\mathcal{S}}_{h} \bm{w}), & \qquad
                    z_{4}\big|_{I_{n}} & := \mathcal{I}_{\tau}^{\text{GL}} (\mathcal{P}_{h} p),
                \end{aligned}
            \end{align}
        \end{subequations}
        for $ n = 1, \ldots, N $.
    \end{definition}
    
    The following statement has been proven in~\cite{Karakashian2005Conv}, see also~\cite{Bause2024Conv}.
    
    \begin{lemma}
        \label{lem:dg_z_property}
        For $ (\bm{z}_{1}, \bm{z}_{2}) $ defined by \eqref{eq:dg_definition_z}, there holds
        \begin{align*}
            \int_{I_{n}} \scalar{\partial_{t} \bm{z}_{1}}{\bm{\varphi}_{\tau, h}^{\bm{v}}} \mathrm{d}t
            = \int_{I_{n}} \scalar{\bm{z}_{2}}{\bm{\varphi}_{\tau, h}^{\bm{v}}} \mathrm{d}t
            \quad \forall \bm{\varphi}_{\tau, h}^{\bm{v}} \in \mathbb{P}_{k-1}(I_{n}; \bm{V}_{h}).
        \end{align*}
    \end{lemma}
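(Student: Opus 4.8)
The plan is to reduce the identity to the exactness of the Gauss--Lobatto quadrature rule, following the strategy of~\cite{Karakashian2005Conv,Bause2024Conv}. First I would introduce the auxiliary $\bm{U}_{h}$-valued polynomial
\[
\bm{G}(t) := \int_{t_{n-1}}^{t} \bm{z}_{2}(s)\,\mathrm{d}s + \bm{\mathcal{R}}_{h}\bm{u}(t_{n-1}), \qquad t \in I_{n}.
\]
By Definition~\ref{def:dg_definition_z} this gives $\bm{z}_{1}\big|_{I_{n}} = \bm{\mathcal{I}}_{\tau}^{\mathrm{GL}}\bm{G}$, and since $\bm{z}_{2}\big|_{I_{n}} = \bm{\mathcal{I}}_{\tau}^{\mathrm{GL}}(\bm{\mathcal{R}}_{h}\partial_{t}\bm{u}) \in \mathbb{P}_{k}(I_{n};\bm{U}_{h})$, we have $\bm{G} \in \mathbb{P}_{k+1}(I_{n};\bm{U}_{h})$ with $\partial_{t}\bm{G} = \bm{z}_{2}$. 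Consequently,
\[
\partial_{t}\bm{z}_{1} - \bm{z}_{2} = \partial_{t}\bm{\mathcal{I}}_{\tau}^{\mathrm{GL}}\bm{G} - \partial_{t}\bm{G} = \partial_{t}\bm{E}, \qquad \bm{E} := \bm{\mathcal{I}}_{\tau}^{\mathrm{GL}}\bm{G} - \bm{G} \in \mathbb{P}_{k+1}(I_{n};\bm{U}_{h}),
\]
so it suffices to show $\int_{I_{n}}\scalar{\partial_{t}\bm{E}}{\bm{\varphi}_{\tau,h}^{\bm{v}}}\,\mathrm{d}t = 0$ for all $\bm{\varphi}_{\tau,h}^{\bm{v}} \in \mathbb{P}_{k-1}(I_{n};\bm{V}_{h})$.

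Next I would integrate by parts in time to obtain
\[
\int_{I_{n}}\scalar{\partial_{t}\bm{E}}{\bm{\varphi}_{\tau,h}^{\bm{v}}}\,\mathrm{d}t = \scalar{\bm{E}(t_{n})}{\bm{\varphi}_{\tau,h}^{\bm{v}}(t_{n})} - \scalar{\bm{E}(t_{n-1})}{\bm{\varphi}_{\tau,h}^{\bm{v}}(t_{n-1})} - \int_{I_{n}}\scalar{\bm{E}}{\partial_{t}\bm{\varphi}_{\tau,h}^{\bm{v}}}\,\mathrm{d}t.
\]
Both boundary terms vanish, because $t_{n-1}$ and $t_{n}$ are the first and last Gauss--Lobatto nodes (the images under $T_{n}$ of $\hat{t}_{0}^{\mathrm{GL}} = 0$ and $\hat{t}_{k}^{\mathrm{GL}} = 1$) and $\bm{\mathcal{I}}_{\tau}^{\mathrm{GL}}\bm{G}$ interpolates $\bm{G}$ at these nodes, whence $\bm{E}(t_{n-1}) = \bm{E}(t_{n}) = \bm{0}$. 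It thus remains to prove $\int_{I_{n}}\scalar{\bm{E}}{\partial_{t}\bm{\varphi}_{\tau,h}^{\bm{v}}}\,\mathrm{d}t = 0$.

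For this last step I would invoke the exactness of the $(k+1)$-point Gauss--Lobatto rule. Since $\bm{E} \in \mathbb{P}_{k+1}(I_{n};\bm{U}_{h})$ and $\partial_{t}\bm{\varphi}_{\tau,h}^{\bm{v}} \in \mathbb{P}_{k-2}(I_{n};\bm{V}_{h})$, the scalar integrand $t \mapsto \scalar{\bm{E}(t)}{\partial_{t}\bm{\varphi}_{\tau,h}^{\bm{v}}(t)}$ is a polynomial of degree at most $(k+1)+(k-2) = 2k-1$, which $Q_{n}^{\mathrm{GL}}$ integrates exactly. As $\bm{E}$ vanishes at every node $t_{n,i}^{\mathrm{GL}}$, this yields
\[
\int_{I_{n}}\scalar{\bm{E}}{\partial_{t}\bm{\varphi}_{\tau,h}^{\bm{v}}}\,\mathrm{d}t = Q_{n}^{\mathrm{GL}}\!\left(\scalar{\bm{E}}{\partial_{t}\bm{\varphi}_{\tau,h}^{\bm{v}}}\right) = \tau_{n}\sum_{i=0}^{k}\hat{\omega}_{i}^{\mathrm{GL}}\scalar{\bm{E}(t_{n,i}^{\mathrm{GL}})}{\partial_{t}\bm{\varphi}_{\tau,h}^{\bm{v}}(t_{n,i}^{\mathrm{GL}})} = 0,
\]
which completes the argument (for $k=1$ the claim is immediate, since then $\partial_{t}\bm{\varphi}_{\tau,h}^{\bm{v}} = \bm{0}$). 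I expect the only delicate point to be the degree bookkeeping here: the integrand sits exactly at the exactness threshold $2k-1$ of the Gauss--Lobatto rule, so it is precisely the choice of Gauss--Lobatto nodes for $\bm{\mathcal{I}}_{\tau}^{\mathrm{GL}}$ (endpoints included, together with the matching quadrature exactness) that makes the boundary terms and the remaining integral vanish simultaneously.
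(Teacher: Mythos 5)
Your proof is correct: the reduction to the interpolation error $\bm{E} := \bm{\mathcal{I}}_{\tau}^{\mathrm{GL}}\bm{G} - \bm{G} \in \mathbb{P}_{k+1}(I_{n};\bm{U}_{h})$, the vanishing of the boundary terms because the Gauss--Lobatto nodes include both endpoints, and the exactness of the $(k+1)$-point Gauss--Lobatto rule for the degree-$(2k-1)$ integrand $\scalar{\bm{E}}{\partial_{t}\bm{\varphi}_{\tau,h}^{\bm{v}}}$ are all applied correctly, including the $k=1$ edge case. The paper itself gives no proof but defers to~\cite{Karakashian2005Conv,Bause2024Conv}, and your argument is essentially the standard one used there.
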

    
    A useful implication is given in the next lemma.
    
    \begin{lemma}[{\cite[Lem.~3.9]{Bause2024Conv}}]
        \label{lem:dg_z_property_pointwise}
        Let $ \bm{x}_{\tau, h} \in \mathbb{P}_{k}(I_{n}; \bm{U}_{h}) $ and $ \bm{y}_{\tau, h} \in \mathbb{P}_{k}(I_{n}; \bm{V}_{h}) $ satisfy
        \begin{align*}
            \int_{I_{n}} \scalar{\partial_{t} \bm{x}_{\tau, h} - \bm{y}_{\tau, h}}{\bm{\varphi}_{\tau, h}^{\bm{v}}} \mathrm{d}t = 0
            \quad \forall \bm{\varphi}_{\tau, h}^{\bm{v}} \in \mathbb{P}_{k-1}(I_{n}; \bm{V}_{h}).
        \end{align*}
        Then, it follows
        \begin{align*}
            \partial_{t} \bm{x}_{\tau, h}(t_{n, i}^{\textnormal{G}}) = \bm{y}_{\tau, h}(t_{n, i}^{\textnormal{G}})
            \quad i = 1, \ldots, k, \, n = 1, \ldots, N.
        \end{align*}
    \end{lemma}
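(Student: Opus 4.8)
The plan is to reinterpret the hypothesis as a statement about the temporal $L^{2}$-projection $\Pi_{\tau}^{k-1}$ and then to read off the pointwise identity at the Gauss nodes from Lemma~\ref{lem:local_L2_projection}. Set $\bm{r} := \partial_{t} \bm{x}_{\tau, h} - \bm{y}_{\tau, h}$. Since $\bm{x}_{\tau, h} \in \mathbb{P}_{k}(I_{n}; \bm{U}_{h})$ yields $\partial_{t} \bm{x}_{\tau, h} \in \mathbb{P}_{k-1}(I_{n}; \bm{U}_{h})$ and $\bm{y}_{\tau, h} \in \mathbb{P}_{k}(I_{n}; \bm{V}_{h})$, with $\bm{U}_{h} = \bm{V}_{h}$ we obtain $\bm{r} \in \mathbb{P}_{k}(I_{n}; \bm{V}_{h})$. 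The assumed orthogonality reads $\int_{I_{n}} \scalar{\bm{r}}{\bm{\varphi}_{\tau, h}^{\bm{v}}} \mathrm{d}t = 0$ for all $\bm{\varphi}_{\tau, h}^{\bm{v}} \in \mathbb{P}_{k-1}(I_{n}; \bm{V}_{h})$, i.e. $\bm{r}$ is orthogonal in $L^{2}(I_{n}; L^{2}(\Omega))$ to every $\bm{V}_{h}$-valued polynomial in time of degree at most $k-1$.

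First I would verify that $\Pi_{\tau}^{k-1} \bm{r}$ is an admissible test function, namely that $\Pi_{\tau}^{k-1} \bm{r} \in \mathbb{P}_{k-1}(I_{n}; \bm{V}_{h})$. This holds because $\Pi_{\tau}^{k-1}$ acts only on the temporal variable: each of its Legendre coefficients is a time integral of $\bm{r}(\cdot, t) \in \bm{V}_{h}$, and $\bm{V}_{h}$ is a finite-dimensional, hence closed, subspace, so these coefficients again lie in $\bm{V}_{h}$. Choosing $\bm{\varphi}_{\tau, h}^{\bm{v}} = \Pi_{\tau}^{k-1} \bm{r}$ in the hypothesis and invoking the defining identity~\eqref{eq:local_L2_projection} of $\Pi_{\tau}^{k-1}$ then gives
\begin{align*}
    0 = \int_{I_{n}} \scalar{\bm{r}}{\Pi_{\tau}^{k-1} \bm{r}} \mathrm{d}t = \int_{I_{n}} \scalar{\Pi_{\tau}^{k-1} \bm{r}}{\Pi_{\tau}^{k-1} \bm{r}} \mathrm{d}t = \norm{\Pi_{\tau}^{k-1} \bm{r}}_{L^{2}(I_{n}; L^{2}(\Omega))}^{2},
\end{align*}
so that $\Pi_{\tau}^{k-1} \bm{r} = 0$ on $I_{n}$.

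Finally, since $\bm{r} \in \mathbb{P}_{k}(I_{n}; \bm{V}_{h})$, Lemma~\ref{lem:local_L2_projection} applies (componentwise in space, as both $\Pi_{\tau}^{k-1}$ and $\mathcal{I}_{\tau, n}^{\textnormal{G}}$ operate purely in time) and yields $\bm{r}(t_{n, i}^{\textnormal{G}}) = \Pi_{\tau}^{k-1} \bm{r}(t_{n, i}^{\textnormal{G}}) = 0$ for $i = 1, \ldots, k$. Unravelling the definition of $\bm{r}$ produces the claimed nodal identity $\partial_{t} \bm{x}_{\tau, h}(t_{n, i}^{\textnormal{G}}) = \bm{y}_{\tau, h}(t_{n, i}^{\textnormal{G}})$, and since the argument is carried out on a generic $I_{n}$ it covers all $n = 1, \ldots, N$.

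I expect the only genuinely delicate point to be the verification that $\Pi_{\tau}^{k-1} \bm{r}$ remains $\bm{V}_{h}$-valued, so that it may legitimately be inserted as a test function; this is exactly what lets the self-testing trick collapse the orthogonality into $\Pi_{\tau}^{k-1} \bm{r} = 0$. As an alternative that avoids the projection language, one may argue directly that each scalar time-polynomial coefficient of $\bm{r}$ is $L^{2}(I_{n})$-orthogonal to $\mathbb{P}_{k-1}(I_{n})$, hence proportional to the degree-$k$ Legendre polynomial on $I_{n}$, whose roots are precisely the $k$ Gauss points $t_{n, i}^{\textnormal{G}}$; this gives the same conclusion while making the appeal to the Gauss-node structure explicit.
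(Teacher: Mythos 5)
Your proposal is correct. Note that the paper itself does not prove this lemma at all: it is quoted verbatim from the literature (Lem.~3.9 of the cited reference), so there is no internal proof to compare against; what you have done is supply a self-contained proof using only the paper's own toolkit, which is valuable. Your argument is sound at every step: $\bm{r} := \partial_{t}\bm{x}_{\tau,h} - \bm{y}_{\tau,h}$ lies in $\mathbb{P}_{k}(I_{n};\bm{V}_{h})$ because $\bm{U}_{h} = \bm{V}_{h}$; the projection $\Pi_{\tau}^{k-1}\bm{r}$ (understood componentwise in space) stays $\bm{V}_{h}$-valued since its time-polynomial coefficients are linear combinations of the coefficients of $\bm{r}$, so it is an admissible test function; testing with it and using the defining identity~\eqref{eq:local_L2_projection} collapses the orthogonality to $\Pi_{\tau}^{k-1}\bm{r} = 0$; and Lemma~\ref{lem:local_L2_projection} then transfers this to the Gauss nodes. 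You correctly identified the one genuinely delicate point ($\bm{V}_{h}$-valuedness of the projection) and resolved it. Your alternative argument at the end --- that for each fixed $\bm{\psi}\in\bm{V}_{h}$ the scalar polynomial $t\mapsto\scalar{\bm{r}(t)}{\bm{\psi}}$ is $L^{2}(I_{n})$-orthogonal to $\mathbb{P}_{k-1}(I_{n})$, hence a multiple of the degree-$k$ Legendre polynomial, which vanishes exactly at the Gauss points --- is the classical mechanism underlying the cited lemma and is arguably the more transparent route; just make sure to close it by choosing $\bm{\psi} = \bm{r}(t_{n,i}^{\textnormal{G}})$ to conclude $\bm{r}(t_{n,i}^{\textnormal{G}}) = \bm{0}$, which your sketch implicitly requires. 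Either version stands as a complete proof.
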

    
    Finally, Lemma~\ref{lem:dg_projection_errors} and the interpolation error estimate~\eqref{eq:interpolation_error} together
    imply the following estimates.
    
    \begin{lemma}
        \label{lem:dg_projection_errors_z}
        For $ s \in \{2, \infty\} $, we have that
        \begin{align*}
            \norm{\bm{u} - \bm{z}_{1}}_{L^{s}(I_{n}; \bm{L}^{2})} & \leq c \left( \tau_{n}^{k+1} \mathcal{C}_{t, s}^{n, 1} + h^{\ell+1} \mathcal{C}_{\mathbf{x}, s}^{n, 1} \right), &
            \norm{\bm{u} - \bm{z}_{1}}_{L^{s}(I_{n}; \bm{U}_{h})} & \leq c \left( \tau_{n}^{k+1} \mathcal{C}_{t, s}^{n, 2} + h^{\ell+1} \mathcal{C}_{\mathbf{x}, s}^{n, 2} \right), \\
            \norm{\bm{v} - \bm{z}_{2}}_{L^{s}(I_{n}; \bm{L}^{2})} & \leq c \left( \tau_{n}^{k+1} \mathcal{C}_{t, s}^{n, 3} + h^{\ell+1} \mathcal{C}_{\mathbf{x}, s}^{n, 3} \right), &
            \norm{\bm{\mathcal{R}}_{h} \bm{v} - \bm{z}_{2}}_{L^{s}(I_{n}; \bm{U}_{h})} & \leq c \left( \tau_{n}^{k+1} \mathcal{C}_{t, s}^{n, 4} + h^{\ell+1} \mathcal{C}_{\mathbf{x}, s}^{n, 4} \right), \\
            \norm{\bm{w} - \bm{z}_{3}}_{L^{s}(I_{n}; \bm{L}^{2})} & \leq c \left( \tau_{n}^{k+1} \mathcal{C}_{t, s}^{n, 5} + h^{\ell+1} \mathcal{C}_{\mathbf{x}, s}^{n, 5} \right), &
            \norm{p - z_{4}}_{L^{s}(I_{n}; L^{2})} & \leq c \left( \tau_{n}^{k+1} \mathcal{C}_{t, s}^{n, 6} + h^{\ell+1} \mathcal{C}_{\mathbf{x}, s}^{n, 6} \right),
        \end{align*}
        are valid, where
        \begin{align*}
            \mathcal{C}_{t, s}^{n, 1} & := \norm{\partial_{t}^{k+1} \bm{u}}_{L^{s}(I_{n}; \bm{L}^{2})} + \tau_{n} \mathcal{C}_{t, s}^{n, 3}, &
            \mathcal{C}_{\mathbf{x}, s}^{n, 1} & := \norm{\bm{u}}_{L^{s}(I_{n}; \bm{H}^{\ell+2})} + \tau_{n} \mathcal{C}_{\mathbf{x}, s}^{n, 3}, \\
            \mathcal{C}_{t, s}^{n, 2} & := \norm{\partial_{t}^{k+1} \bm{u}}_{L^{s}(I_{n}; \bm{U}_{h})} + \tau_{n} \mathcal{C}_{t, s}^{n, 4}, &
            \mathcal{C}_{\mathbf{x}, s}^{n, 2} & := \norm{\bm{u}}_{L^{s}(I_{n}; \bm{H}^{\ell+2})} + \tau_{n} \mathcal{C}_{\mathbf{x}, s}^{n, 4}, \\
            \mathcal{C}_{t, s}^{n, 3} & := \norm{\partial_{t}^{k+2} \bm{u}}_{L^{s}(I_{n}; \bm{L}^{2})}, &
            \mathcal{C}_{\mathbf{x}, s}^{n, 3} & := \norm{\partial_{t} \bm{u}}_{L^{s}(I_{n}; \bm{H}^{\ell+2})} + \tau_{n} \norm{\partial_{t}^{2} \bm{u}}_{L^{s}(I_{n}; \bm{H}^{\ell+2})}, \\
            \mathcal{C}_{t, s}^{n, 4} & := \norm{\partial_{t}^{k+2} \bm{u}}_{L^{s}(I_{n}; \bm{U}_{h})}, &
            \mathcal{C}_{\mathbf{x}, s}^{n, 4} & := \norm{\partial_{t} \bm{u}}_{L^{s}(I_{n}; \bm{H}^{\ell+2})}, \\
            \mathcal{C}_{t, s}^{n, 5} & := \norm{\partial_{t}^{k+1} \bm{w}}_{L^{s}(I_{n}; \bm{L}^{2})}, &
            \mathcal{C}_{\mathbf{x}, s}^{n, 5} & := \norm{\bm{w}}_{L^{s}(I_{n}; \bm{H}^{\ell+1})} + \tau_{n} \norm{\partial_{t} \bm{w}}_{L^{s}(I_{n}; \bm{H}^{\ell+1})}, \\
            \mathcal{C}_{t, s}^{n, 6} & := \norm{\partial_{t}^{k+1} p}_{L^{s}(I_{n}; L^{2})}, &
            \mathcal{C}_{\mathbf{x}, s}^{n, 6} & := \norm{p}_{L^{s}(I_{n}; H^{\ell+1})} + \tau_{n} \norm{\partial_{t} p}_{L^{s}(I_{n}; H^{\ell+1})}.
        \end{align*}
    \end{lemma}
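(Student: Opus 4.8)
The plan is to derive all six bounds by one common two–step scheme: split each error into a \emph{temporal} interpolation error of a function that is smooth in time, to be controlled by the interpolation estimate~\eqref{eq:interpolation_error}, plus a \emph{spatial} projection error, to be controlled by Lemma~\ref{lem:dg_projection_errors}. The device that couples the two is that $\bm{\mathcal{R}}_{h}$, $\bm{\mathcal{S}}_{h}$ and $\mathcal{P}_{h}$ are linear and independent of $t$, hence commute both with $\partial_{t}$ and with the Lagrange-in-time operator $\bm{\mathcal{I}}_{\tau}^{\text{GL}}$; in particular, using $\bm{v}=\partial_{t}\bm{u}$, one has $\bm{z}_{2}=\bm{\mathcal{I}}_{\tau}^{\text{GL}}(\bm{\mathcal{R}}_{h}\bm{v})$, $\bm{z}_{3}=\bm{\mathcal{I}}_{\tau}^{\text{GL}}(\bm{\mathcal{S}}_{h}\bm{w})$ and $z_{4}=\mathcal{I}_{\tau}^{\text{GL}}(\mathcal{P}_{h}p)$.

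First I would dispose of the four non-integrated quantities $\bm{v}-\bm{z}_{2}$, $\bm{\mathcal{R}}_{h}\bm{v}-\bm{z}_{2}$, $\bm{w}-\bm{z}_{3}$ and $p-z_{4}$. For $\bm{v}-\bm{z}_{2}$ I write $\bm{v}-\bm{z}_{2}=(\bm{v}-\bm{\mathcal{I}}_{\tau}^{\text{GL}}\bm{v})+\bm{\mathcal{I}}_{\tau}^{\text{GL}}(\bm{v}-\bm{\mathcal{R}}_{h}\bm{v})$; the first summand is bounded by~\eqref{eq:interpolation_error} and produces $\tau_{n}^{k+1}\mathcal{C}_{t,s}^{n,3}$. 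For the second summand I use the identity $\bm{\mathcal{I}}_{\tau}^{\text{GL}}g=g-(g-\bm{\mathcal{I}}_{\tau}^{\text{GL}}g)$ with $g=\bm{v}-\bm{\mathcal{R}}_{h}\bm{v}$, thereby avoiding any stability constant for $\bm{\mathcal{I}}_{\tau}^{\text{GL}}$: the leading piece $g$ is estimated pointwise in time by Lemma~\ref{lem:dg_projection_errors} and yields $h^{\ell+1}\norm{\partial_{t}\bm{u}}_{L^{s}(I_{n};\bm{H}^{\ell+2})}$, while the remaining temporal error, estimated to first order in $\tau_{n}$ and combined with $\partial_{t}g=\partial_{t}^{2}\bm{u}-\bm{\mathcal{R}}_{h}\partial_{t}^{2}\bm{u}$, supplies $\tau_{n}h^{\ell+1}\norm{\partial_{t}^{2}\bm{u}}_{L^{s}(I_{n};\bm{H}^{\ell+2})}$; together these are $h^{\ell+1}\mathcal{C}_{\mathbf{x},s}^{n,3}$. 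The bounds for $\bm{w}-\bm{z}_{3}$ and $p-z_{4}$ are identical with $\bm{\mathcal{S}}_{h}$, $\mathcal{P}_{h}$ in place of $\bm{\mathcal{R}}_{h}$, which explains the shape of $\mathcal{C}_{\cdot,s}^{n,5}$ and $\mathcal{C}_{\cdot,s}^{n,6}$. For $\bm{\mathcal{R}}_{h}\bm{v}-\bm{z}_{2}=\bm{\mathcal{R}}_{h}\bm{v}-\bm{\mathcal{I}}_{\tau}^{\text{GL}}(\bm{\mathcal{R}}_{h}\bm{v})$ only a temporal error survives; applying~\eqref{eq:interpolation_error} in the norm $\norm{\cdot}_{\bm{U}_{h}}$ together with the $\bm{U}_{h}$-stability of $\bm{\mathcal{R}}_{h}$, which is immediate from coercivity and boundedness of $a_{h}$, gives $\tau_{n}^{k+1}\mathcal{C}_{t,s}^{n,4}$, the spatial contribution $h^{\ell+1}\mathcal{C}_{\mathbf{x},s}^{n,4}$ being a harmless over-estimate in the stated inequality.

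The two $\bm{z}_{1}$ estimates are the real work, and I expect them to be the main obstacle. Writing $\tilde{\bm{u}}(t):=\int_{t_{n-1}}^{t}\bm{z}_{2}(s)\,\mathrm{d}s+\bm{\mathcal{R}}_{h}\bm{u}(t_{n-1})$ so that $\bm{z}_{1}=\bm{\mathcal{I}}_{\tau}^{\text{GL}}\tilde{\bm{u}}$ by~\eqref{eq:dg_definition_z-2}, I decompose $\bm{u}-\bm{z}_{1}=(\bm{u}-\bm{\mathcal{I}}_{\tau}^{\text{GL}}\bm{u})+\bm{\mathcal{I}}_{\tau}^{\text{GL}}(\bm{u}-\tilde{\bm{u}})$. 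The first term contributes $\tau_{n}^{k+1}\norm{\partial_{t}^{k+1}\bm{u}}_{L^{s}(I_{n};\bm{L}^{2})}$. For the second, the fundamental theorem of calculus gives $\bm{u}(t)-\tilde{\bm{u}}(t)=\int_{t_{n-1}}^{t}(\bm{v}-\bm{z}_{2})(s)\,\mathrm{d}s+\bigl(\bm{u}(t_{n-1})-\bm{\mathcal{R}}_{h}\bm{u}(t_{n-1})\bigr)$, so a Cauchy--Schwarz bound of the time integral (for $s=2$) or a trivial one (for $s=\infty$) costs exactly one factor $\tau_{n}$ and lets me recycle the already-proven estimate for $\bm{v}-\bm{z}_{2}$; this is precisely the origin of the structure $\mathcal{C}_{\cdot,s}^{n,1}=(\text{direct term})+\tau_{n}\,\mathcal{C}_{\cdot,s}^{n,3}$. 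The initial-value defect $\bm{u}(t_{n-1})-\bm{\mathcal{R}}_{h}\bm{u}(t_{n-1})$ is handled by Lemma~\ref{lem:dg_projection_errors} and produces the $h^{\ell+1}\norm{\bm{u}}_{L^{s}(I_{n};\bm{H}^{\ell+2})}$ term. The $\bm{U}_{h}$-norm bound for $\bm{u}-\bm{z}_{1}$ follows the same line after first peeling off $\bm{u}-\bm{\mathcal{R}}_{h}\bm{u}$ and then estimating $\bm{\mathcal{R}}_{h}\bm{u}-\bm{z}_{1}$ entirely inside $\bm{U}_{h}$, using $\bm{\mathcal{R}}_{h}\bm{u}(t)-\bm{\mathcal{R}}_{h}\bm{u}(t_{n-1})=\int_{t_{n-1}}^{t}\bm{\mathcal{R}}_{h}\bm{v}\,\mathrm{d}s$ together with the $\bm{\mathcal{R}}_{h}\bm{v}-\bm{z}_{2}$ estimate, which is where the factor $\tau_{n}\mathcal{C}_{\cdot,s}^{n,4}$ enters.

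The delicate bookkeeping throughout is the passage between the $s=2$ and $s=\infty$ norms through $\bm{\mathcal{I}}_{\tau}^{\text{GL}}$ and through the time integral: the temporal $L^{\infty}$--$L^{2}$ control of the interpolation operator and of the endpoint value at $t_{n-1}$ must be tracked so that every power of $\tau_{n}$ matches the statement uniformly in $s$, and one must ensure that the time regularity of $\bm{u}$, $\bm{w}$, $p$ is high enough for all projections of the time-differentiated fields to be meaningful. None of these steps is conceptually hard; the difficulty lies entirely in carrying the correct powers of $\tau_{n}$ and $h$ consistently across both values of $s$.
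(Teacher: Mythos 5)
Your proposal is correct and is essentially the same argument the paper relies on: the paper's proof is a one-line citation of~\cite{Bause2024Conv} (Lem.~4.1), whose strategy is precisely your splitting of each error into a temporal Gauss--Lobatto interpolation error (handled by~\eqref{eq:interpolation_error}) plus a spatial projection error (handled by Lemma~\ref{lem:dg_projection_errors}), with the time-independent projections commuting with $\partial_{t}$ and $\bm{\mathcal{I}}_{\tau}^{\text{GL}}$ and the fundamental theorem of calculus generating the $\tau_{n}\mathcal{C}^{n,3}_{\cdot,s}$, $\tau_{n}\mathcal{C}^{n,4}_{\cdot,s}$ contributions for the $\bm{z}_{1}$ bounds. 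The standard facts you leave implicit (first-order Lagrange interpolation error for the mixed $\tau_{n}h^{\ell+1}$ terms, $L^{\infty}$-stability of $\bm{\mathcal{I}}_{\tau}^{\text{GL}}$, and a one-dimensional trace-in-time inequality to absorb the endpoint defect $\bm{u}(t_{n-1})-\bm{\mathcal{R}}_{h}\bm{u}(t_{n-1})$ into $\mathcal{C}^{n,1}_{\mathbf{x},2}$ when $s=2$) are routine and do not affect correctness.
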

    
    \begin{proof}
        The proof of these statements follows the lines of~\cite[Lem.~4.1]{Bause2024Conv}.
    \end{proof}
    
    To simplify the notation, we introduce the symmetric positive definite matrix
    \begin{align*}
        \bm{M}_{\rho} :=
        \begin{pmatrix}
            \bar{\rho} \bm{I} & \rho_{f} \bm{I} \\
            \rho_{f} \bm{I} & \rho_{w} \bm{I}
        \end{pmatrix},
    \end{align*}
    cf.~\cite{Mielke2013Homo}, and define the energy norm
    \begin{align*}
        \opnorm{(\bm{u}, \bm{v}, \bm{w}, p)}_{e}^{2}
        := \frac{1}{2} \, a_{h}(\bm{u}, \bm{u})
        + \frac{1}{2} \scalar{\bm{M}_{\rho}
            \begin{pmatrix}
                \bm{v} \\
                \bm{w}
            \end{pmatrix}
        }{
            \begin{pmatrix}
                \bm{v} \\
                \bm{w}
            \end{pmatrix}
        }
        + \frac{s_{0}}{2} \norm{p}_{L^{2}(\Omega)}^{2}.
    \end{align*}
    Due to the coercivity and boundedness of $ a_{h}(\cdot, \cdot) $, this norm is equivalent to
    \begin{align*}
        \opnorm{(\bm{u}, \bm{v}, \bm{w}, p)}^{2}
        := \norm{\bm{u}}_{\bm{U}_{h}}^{2}
        + \norm{\bm{v}}_{\bm{L}^{2}(\Omega)}^{2}
        + \norm{\bm{w}}_{\bm{L}^{2}(\Omega)}^{2}
        + \norm{p}_{L^{2}(\Omega)}^{2}.
    \end{align*}
    Moreover, we will make use of the Bochner norm
    \begin{align*}
        \opnorm{(\bm{u}, \bm{v}, \bm{w}, p)}_{L^{2}(I_{n}; \bm{L}^{2})}^{2}
        := \int_{I_{n}} \opnorm{(\bm{u}, \bm{v}, \bm{w}, p)}^{2} \mathrm{d}t.
    \end{align*}

    \subsection{Error estimates}
    Here, we adapt the strategy used in \cite{Karakashian2005Conv, Bause2024Conv} to the dynamic Biot problem.
    Subsequently, we assume that all occurring norms in the analysis are finite and that the estimates
    \begin{align}
        \label{eq:dg_assumptions}
        \begin{aligned}
            \norm{\bm{\mathcal{R}}_{h} \bm{u}_{0} - \bm{u}_{0, h}}_{\bm{U}_{h}}
            & \leq c h^{\ell+1} \norm{\bm{u}_{0}}_{\bm{H}^{\ell+2}(\Omega)}, &
            \norm{\bm{\mathcal{S}}_{h} \bm{w}_{0} - \bm{w}_{0, h}}_{\bm{L}^{2}(\Omega)}
            & \leq c h^{\ell+1} \norm{\bm{w}_{0}}_{\bm{H}^{\ell+1}(\Omega)}, \\
            \norm{\bm{\mathcal{R}}_{h} \bm{v}_{0} - \bm{v}_{0, h}}_{\bm{L}^{2}(\Omega)}
            & \leq c h^{\ell+1} \norm{\bm{v}_{0}}_{\bm{H}^{\ell+2}(\Omega)}, &
            \norm{\mathcal{P}_{h} p_{0} - p_{0, h}}_{L^{2}(\Omega)}
            & \leq c h^{\ell+1} \norm{p_{0}}_{H^{\ell+1}(\Omega)},
        \end{aligned}
    \end{align}
    hold true.
    In a first step, we split the discretization error into two parts, i.e.,
    \begin{align}
        \label{eq:dg_eta_e_splitting}
        \begin{pmatrix}
            \bm{u} - \bm{u}_{\tau, h} \\
            \bm{v} - \bm{v}_{\tau, h} \\
            \bm{w} - \bm{w}_{\tau, h} \\
            p - p_{\tau, h}
        \end{pmatrix}
        & =
        \begin{pmatrix}
            \bm{u} - \bm{z}_{1} \\
            \bm{v} - \bm{z}_{2} \\
            \bm{w} - \bm{z}_{3} \\
            p - z_{4}
        \end{pmatrix}
        +
        \begin{pmatrix}
            \bm{z}_{1} - \bm{u}_{\tau, h} \\
            \bm{z}_{2} - \bm{v}_{\tau, h} \\
            \bm{z}_{3} - \bm{w}_{\tau, h} \\
            z_{4} - p_{\tau, h}
        \end{pmatrix}
        =:
        \begin{pmatrix}
            \bm{\eta}^{\bm{u}} \\
            \bm{\eta}^{\bm{v}} \\
            \bm{\eta}^{\bm{w}} \\
            \eta^{p}
        \end{pmatrix}
        +
        \begin{pmatrix}
            \bm{e}_{\tau, h}^{\bm{u}} \\
            \bm{e}_{\tau, h}^{\bm{v}} \\
            \bm{e}_{\tau, h}^{\bm{w}} \\
            e_{\tau, h}^{p} \\
        \end{pmatrix}
        =: \bm{\eta}^{\bm{x}} + \bm{e}_{\tau, h}^{\bm{x}},
    \end{align}
    where $ (\bm{z}_{1}, \bm{z}_{2}, \bm{z}_{3}, z_{4}) $ are the piecewise polynomials from Definition~\ref{def:dg_definition_z}.
    For this splitting, analogously to~\cite[Lem.~4.2]{Bause2024Conv}, one obtains the following result.
    
    \begin{lemma}[{cf.~\cite{Bause2024Conv}}]
        \label{lem:dg_variational_problem}
        Let
        \begin{align*}
            \bm{T}_{\text{I}}^{n} & := \bm{f} - \bm{\mathcal{I}}_{\tau}^{\textnormal{GL}} \bm{f}, & \qquad
            \bm{T}_{\text{II}}^{n} & := \bar{\rho} \partial_{t}^{2} \bm{u} - \bar{\rho} \partial_{t} \bm{z}_{2}, & \qquad
            \bm{T}_{\text{III}}^{n} & := \bm{\mathcal{I}}_{\tau}^{\textnormal{GL}} \left( \int_{t_{n-1}}^{t} \partial_{t} \bm{u} - \bm{\mathcal{I}}_{\tau}^{\textnormal{GL}} \partial_{t} \bm{u} \mathrm{d}s \right), \\
            \bm{T}_{\text{IV}}^{n} & := \bm{\mathcal{I}}_{\tau}^{\textnormal{GL}} \bm{u} - \bm{u}, & \qquad
            T_{\text{V}}^{n} & := g - \mathcal{I}_{\tau}^{\textnormal{GL}} g.
        \end{align*}
        Then we have the following variational problem for the components of $ \bm{e}_{\tau, h}^{\bm{x}} $
        \begin{subequations}
            \label{eq:dg_var_eqn}
            \begin{align}
                \begin{split}
                    \label{eq:dg_var_eqn-1}
                    & \int_{I_{n}} \scalar{\bar{\rho} \partial_{t} \bm{e}_{\tau, h}^{\bm{v}} + \rho_{f} \partial_{t} \bm{e}_{\tau, h}^{\bm{w}}}{\bm{\varphi}_{\tau, h}^{\bm{u}}}
                    + \scalar{\bm{\mathcal{A}}_{h} \bm{e}_{\tau, h}^{\bm{u}}}{\bm{\varphi}_{\tau, h}^{\bm{u}}}
                    - \scalar{\alpha e_{\tau, h}^{p}}{\mathrm{div}(\bm{\varphi}_{\tau, h}^{\bm{u}})} \mathrm{d}t \\
                    & \qquad = \int_{I_{n}} \scalar{\bm{T}_{\text{I}}^{n} - \bm{T}_{\text{II}}^{n} - \rho_{f} \partial_{t} \bm{\eta}^{\bm{w}}}{\bm{\varphi}_{\tau, h}^{\bm{u}}}
                    - \scalar{\bm{\mathcal{A}}_{h} \bm{T}_{\text{III}}^{n}}{\bm{\varphi}_{\tau, h}^{\bm{u}}}
                    + \scalar{\bm{\mathcal{A}}_{h} \bm{T}_{\text{IV}}^{n}}{\bm{\varphi}_{\tau, h}^{\bm{u}}}
                    + \scalar{\alpha \eta^{p}}{\mathrm{div}(\bm{\varphi}_{\tau, h}^{\bm{u}})} \mathrm{d}t,
                \end{split} \\
                \begin{split}
                    \label{eq:dg_var_eqn-2}
                    & \int_{I_{n}} \scalar{\partial_{t} \bm{e}_{\tau, h}^{\bm{u}} - \bm{e}_{\tau, h}^{\bm{v}}}{\bm{\varphi}_{\tau, h}^{\bm{v}}} \mathrm{d}t = 0,
                \end{split} \\
                \begin{split}
                    \label{eq:dg_var_eqn-3}
                    & \int_{I_{n}} \scalar{\rho_{f} \partial_{t} \bm{e}_{\tau, h}^{\bm{v}} + \rho_{w} \partial_{t} \bm{e}_{\tau, h}^{\bm{w}} + \bm{K}^{-1} \bm{e}_{\tau, h}^{\bm{w}}}{\bm{\varphi}_{\tau, h}^{\bm{w}}}
                    - \scalar{e_{\tau, h}^{p}}{\mathrm{div}(\bm{\varphi}_{\tau, h}^{\bm{w}})} \mathrm{d}t \\
                    & \qquad = - \int_{I_{n}} \scalar{\rho_{f} \partial_{t} \bm{\eta}^{\bm{v}} + \rho_{w} \partial_{t} \bm{\eta}^{\bm{w}} + \bm{K}^{-1} \bm{\eta}^{\bm{w}}}{\bm{\varphi}_{\tau, h}^{\bm{w}}}
                    - \scalar{\eta^{p}}{\mathrm{div}(\bm{\varphi}_{\tau, h}^{\bm{w}})} \mathrm{d}t,
                \end{split} \\
                \begin{split}
                    \label{eq:dg_var_eqn-4}
                    & \int_{I_{n}} \scalar{s_{0} \partial_{t} e_{\tau, h}^{p} + \alpha \mathrm{div}(\partial_{t} \bm{e}_{\tau, h}^{\bm{u}}) + \mathrm{div}(\bm{e}_{\tau, h}^{\bm{w}})}{\varphi_{\tau, h}^{p}} \mathrm{d}t \\
                    & \qquad = \int_{I_{n}} \scalar{T_{\text{V}}^{n}
                        - s_{0} \partial_{t} \eta^{p} - \alpha \mathrm{div}(\partial_{t} \bm{\eta}^{\bm{u}}) - \mathrm{div}(\bm{\eta}^{\bm{w}})}{\varphi_{\tau, h}^{p}} \mathrm{d}t,
                \end{split}
            \end{align}
        \end{subequations}
        for all $ (\bm{\varphi}_{\tau, h}^{\bm{u}}, \bm{\varphi}_{\tau, h}^{\bm{v}}, \bm{\varphi}_{\tau, h}^{\bm{w}}, \varphi_{\tau, h}^{p}) \in \mathbb{P}_{k-1}(I_{n}; \bm{X}_{h}) $ and $ n = 1, \ldots, N $.
    \end{lemma}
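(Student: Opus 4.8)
The plan is to derive all four identities in~\eqref{eq:dg_var_eqn} by one common recipe: (i) show that the exact solution $(\bm{u},\bm{v},\bm{w},p)$ satisfies the \emph{discrete} variational equations of Problem~\ref{pr:dg_dyn_biot_disc_weak_form} tested against $\mathbb{P}_{k-1}(I_{n};\bm{X}_{h})$, now with the exact data $\bm{f},g$ in place of $\pmb{\mathcal{I}}_{\tau}^{\text{GL}}\bm{f},\mathcal{I}_{\tau}^{\text{GL}}g$; (ii) subtract the equations solved by $(\bm{u}_{\tau,h},\bm{v}_{\tau,h},\bm{w}_{\tau,h},p_{\tau,h})$; and (iii) insert the splitting~\eqref{eq:dg_eta_e_splitting}, keeping the $\bm{e}_{\tau,h}^{\bm{x}}$-contributions on the left and moving the projection residuals $\bm{\eta}^{\bm{x}}$ to the right. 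For step (i) I would start from the strong form~\eqref{eq:dg_dyn_biot_velo}, multiply by a discrete test function and integrate in space and over $I_{n}$. The only delicate terms are the elastic and pressure-gradient contributions: since the exact $\bm{u}$ has continuous tangential traces and $\bm{\epsilon}(\bm{u}),\mathrm{div}(\bm{u})$ are single-valued across interior facets, consistency of the $H(\mathrm{div})$-conforming DG form (cf.~\cite{Hong2018Para}) gives $a_{h}(\bm{u},\bm{\varphi}_{\tau,h}^{\bm{u}})=\scalar{-2\mu\,\mathrm{div}(\bm{\epsilon}(\bm{u}))-\lambda\nabla\mathrm{div}(\bm{u})}{\bm{\varphi}_{\tau,h}^{\bm{u}}}$; and because $\bm{\varphi}_{\tau,h}^{\bm{u}},\bm{\varphi}_{\tau,h}^{\bm{w}}\in\bm{H}_{0}(\mathrm{div},\Omega)$ while $p$ is continuous, integration by parts yields $\scalar{\alpha\nabla p}{\bm{\varphi}_{\tau,h}^{\bm{u}}}=-\scalar{\alpha p}{\mathrm{div}(\bm{\varphi}_{\tau,h}^{\bm{u}})}$ and $\scalar{\nabla p}{\bm{\varphi}_{\tau,h}^{\bm{w}}}=-\scalar{p}{\mathrm{div}(\bm{\varphi}_{\tau,h}^{\bm{w}})}$ with no facet remainders. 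The interpolation residuals $\bm{T}_{\text{I}}^{n}=\bm{f}-\pmb{\mathcal{I}}_{\tau}^{\text{GL}}\bm{f}$ and $T_{\text{V}}^{n}=g-\mathcal{I}_{\tau}^{\text{GL}}g$ then arise precisely as the difference between exact and interpolated data.

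The velocity constraint~\eqref{eq:dg_var_eqn-2} is immediate: subtracting the discrete velocity equation of Problem~\ref{pr:dg_dyn_biot_disc_weak_form} from the identity $\int_{I_{n}}\scalar{\partial_{t}\bm{z}_{1}-\bm{z}_{2}}{\bm{\varphi}_{\tau,h}^{\bm{v}}}\mathrm{d}t=0$ of Lemma~\ref{lem:dg_z_property} gives $\int_{I_{n}}\scalar{\partial_{t}\bm{e}_{\tau,h}^{\bm{u}}-\bm{e}_{\tau,h}^{\bm{v}}}{\bm{\varphi}_{\tau,h}^{\bm{v}}}\mathrm{d}t=0$ at once, since $\bm{e}_{\tau,h}^{\bm{u}}=\bm{z}_{1}-\bm{u}_{\tau,h}$ and $\bm{e}_{\tau,h}^{\bm{v}}=\bm{z}_{2}-\bm{v}_{\tau,h}$. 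The Darcy identity~\eqref{eq:dg_var_eqn-3} and the mass identity~\eqref{eq:dg_var_eqn-4} follow purely from steps (i)--(iii): after inserting the splitting, every $\bm{\eta}^{\bm{x}}$-term passes to the right-hand side unchanged, the residual $T_{\text{V}}^{n}$ appears in~\eqref{eq:dg_var_eqn-4}, and the terms $\alpha\,\mathrm{div}(\partial_{t}\bm{e}_{\tau,h}^{\bm{u}})$ and $\mathrm{div}(\bm{e}_{\tau,h}^{\bm{w}})$ are legitimately tested against $\varphi_{\tau,h}^{p}\in P_{h}$ because $\mathrm{div}(\bm{U}_{h})\subseteq P_{h}$ and $\mathrm{div}(\bm{W}_{h})\subseteq P_{h}$.

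The real work is the momentum identity~\eqref{eq:dg_var_eqn-1}, where the projection residuals must be recast into $\bm{T}_{\text{II}}^{n},\bm{T}_{\text{III}}^{n},\bm{T}_{\text{IV}}^{n}$, and this is where I expect the main obstacle. Two observations drive it. First, since $\bm{v}=\partial_{t}\bm{u}$ and $\bm{\eta}^{\bm{v}}=\bm{v}-\bm{z}_{2}$, one has $\bar{\rho}\,\partial_{t}\bm{\eta}^{\bm{v}}=\bar{\rho}\,\partial_{t}^{2}\bm{u}-\bar{\rho}\,\partial_{t}\bm{z}_{2}=\bm{T}_{\text{II}}^{n}$, which disposes of the inertial residual. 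Second, using~\eqref{eq:dg_projection_Ah} I would rewrite $a_{h}(\bm{e}_{\tau,h}^{\bm{u}},\cdot)=\scalar{\bm{\mathcal{A}}_{h}\bm{e}_{\tau,h}^{\bm{u}}}{\cdot}$ and $a_{h}(\bm{\eta}^{\bm{u}},\cdot)=\scalar{\bm{\mathcal{A}}_{h}\bm{\eta}^{\bm{u}}}{\cdot}$, and then prove the key identity
\[
    \bm{\mathcal{A}}_{h}\bm{\eta}^{\bm{u}} = \bm{\mathcal{A}}_{h}\bm{T}_{\text{III}}^{n} - \bm{\mathcal{A}}_{h}\bm{T}_{\text{IV}}^{n}.
\]
The mechanism is that $\bm{\mathcal{A}}_{h}$ is a purely spatial operator, so it commutes with the temporal interpolation $\pmb{\mathcal{I}}_{\tau}^{\text{GL}}$ and with $\int_{t_{n-1}}^{t}(\cdot)\,\mathrm{d}s$, while the definitions~\eqref{eq:dg_projection_Rh}--\eqref{eq:dg_projection_Ah} force the collapse $\bm{\mathcal{A}}_{h}\bm{\mathcal{R}}_{h}=\bm{\mathcal{A}}_{h}$. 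Applying these to the construction~\eqref{eq:dg_definition_z-2} of $\bm{z}_{1}$ and $\bm{z}_{2}$, together with the fundamental-theorem identity
\[
    \bm{\mathcal{A}}_{h}\bm{u}(t) - \bm{\mathcal{A}}_{h}\bm{u}(t_{n-1}) = \int_{t_{n-1}}^{t} \bm{\mathcal{A}}_{h}\partial_{t}\bm{u}\,\mathrm{d}s,
\]
makes the arguments of the two sides coincide after the interpolation is applied. Substituting these rewrites, retaining the residual $-\rho_{f}\partial_{t}\bm{\eta}^{\bm{w}}$ and the pressure contribution $\scalar{\alpha\eta^{p}}{\mathrm{div}(\cdot)}$, then reproduces exactly~\eqref{eq:dg_var_eqn-1}, completing the derivation for $n=1,\dots,N$.
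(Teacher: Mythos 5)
Your proposal is correct and takes essentially the same route as the paper, which establishes this lemma exactly as in~\cite[Lem.~4.2]{Bause2024Conv}: consistency of the exact solution in the discrete variational equations, subtraction of Problem~\ref{pr:dg_dyn_biot_disc_weak_form}, insertion of the splitting~\eqref{eq:dg_eta_e_splitting}, and use of Lemma~\ref{lem:dg_z_property} for~\eqref{eq:dg_var_eqn-2}. In particular, your key identity $\bm{\mathcal{A}}_{h}\bm{\eta}^{\bm{u}} = \bm{\mathcal{A}}_{h}\bm{T}_{\text{III}}^{n} - \bm{\mathcal{A}}_{h}\bm{T}_{\text{IV}}^{n}$, obtained from $\bm{\mathcal{A}}_{h}\bm{\mathcal{R}}_{h}=\bm{\mathcal{A}}_{h}$, the commutation of $\bm{\mathcal{A}}_{h}$ with $\bm{\mathcal{I}}_{\tau}^{\text{GL}}$ and $\int_{t_{n-1}}^{t}(\cdot)\,\mathrm{d}s$, and the fundamental theorem of calculus, is precisely the mechanism by which the terms $\bm{T}_{\text{III}}^{n}$ and $\bm{T}_{\text{IV}}^{n}$ arise, and it checks out with the correct signs.
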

    
    By the same arguments as in~\cite[Lem.~4.3]{Bause2024Conv}, the next three lemmas provide estimates for terms on the right-hand side of \eqref{eq:dg_var_eqn}.
    
    \begin{lemma}[Estimates for \eqref{eq:dg_var_eqn-1}]
        \label{lem:dg_estimates1}
        For all $ \bm{\varphi}_{\tau, h}^{\bm{u}} \in \mathbb{P}_{k-1}(I_{n}; \bm{U}_{h}) $, the estimates
        \begin{align*}
            \abs{\int_{I_{n}} \scalar{\bm{T}_{\text{II}}^{n}}{\bm{\varphi}_{\tau, h}^{\bm{u}}} \mathrm{d}t}
            & \leq c \left( \tau_{n}^{k+1} \norm{\partial_{t}^{k+3} \bm{u}}_{L^{2}(I_{n}; \bm{L}^{2})}
            + h^{\ell+1} \norm{\partial_{t}^{2} \bm{u}}_{L^{2}(I_{n}; \bm{H}^{\ell+2})} \right) \norm{\bm{\varphi}_{\tau, h}^{\bm{u}}}_{L^{2}(I_{n}; \bm{L}^{2})}, \\
            \norm{\bm{\mathcal{A}}_{h} \bm{T}_{\text{III}}^{n}}_{L^{2}(I_{n}; \bm{L}^{2})}
            & \leq c \tau_{n}^{k+1} \norm{\bm{\mathcal{A}}_{h} \partial_{t}^{k+1} \bm{u}}_{L^{2}(I_{n}; \bm{L}^{2})}, \\
            \norm{\bm{\mathcal{A}}_{h} \bm{T}_{\text{IV}}^{n}}_{L^{2}(I_{n}; \bm{L}^{2})}
            & \leq c \tau_{n}^{k+1} \norm{\bm{\mathcal{A}}_{h} \partial_{t}^{k+1} \bm{u}}_{L^{2}(I_{n}; \bm{L}^{2})}, \\
            \abs{\int_{I_{n}} \scalar{\partial_{t} \bm{\eta}^{\bm{w}}}{\bm{\varphi}_{\tau, h}^{\bm{u}}} \mathrm{d}t}
            & \leq c \left( \tau_{n}^{k+1} \norm{\partial_{t}^{k+2} \bm{w}}_{L^{2}(I_{n}; \bm{L}^{2})}
            + h^{\ell+1} \norm{\partial_{t} \bm{w}}_{L^{2}(I_{n}; \bm{H}^{\ell+1})} \right) \norm{\bm{\varphi}_{\tau, h}^{\bm{u}}}_{L^{2}(I_{n}; \bm{L}^{2})},
        \end{align*}
        hold true.
    \end{lemma}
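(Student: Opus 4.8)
The plan is to treat the four terms exactly as in the proof of \cite[Lem.~4.3]{Bause2024Conv}, the only genuinely new feature being that the pure time-interpolation there is now composed with the spatial projections $\bm{\mathcal{R}}_h$, $\bm{\mathcal{A}}_h$ and $\bm{\mathcal{S}}_h$. Throughout I would exploit that each of these spatial operators is linear and acts pointwise in time, hence commutes with $\partial_t$, with the temporal integral $\int_{t_{n-1}}^{t}(\cdot)\,\mathrm{d}s$, and with the Lagrange interpolation $\bm{\mathcal{I}}_{\tau}^{\textnormal{GL}}$. This lets me isolate in every term a \emph{spatial} contribution, controlled by the projection bounds of Lemma~\ref{lem:dg_projection_errors}, and a \emph{temporal} contribution, controlled by the interpolation bound \eqref{eq:interpolation_error}.

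The two operator terms are the most direct. For $\bm{T}_{\text{IV}}^{n}=\bm{\mathcal{I}}_{\tau}^{\textnormal{GL}}\bm{u}-\bm{u}$ I would commute $\bm{\mathcal{A}}_h$ through the interpolation, writing $\bm{\mathcal{A}}_h \bm{T}_{\text{IV}}^{n}=-(\bm{\mathcal{A}}_h\bm{u}-\bm{\mathcal{I}}_{\tau}^{\textnormal{GL}}\bm{\mathcal{A}}_h\bm{u})$, apply \eqref{eq:interpolation_error} to $f=\bm{\mathcal{A}}_h\bm{u}$, and commute $\bm{\mathcal{A}}_h$ back through $\partial_t^{k+1}$; this gives the claimed $c\,\tau_n^{k+1}\norm{\bm{\mathcal{A}}_h\partial_t^{k+1}\bm{u}}_{L^2(I_n;\bm{L}^2)}$. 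For $\bm{T}_{\text{III}}^{n}$ I would likewise move $\bm{\mathcal{A}}_h$ inside the interpolation and the integral, reducing to the interpolation error of $\bm{\mathcal{A}}_h\partial_t\bm{u}$; bounding the antiderivative pointwise by Cauchy--Schwarz, using the $L^2$-stability of $\bm{\mathcal{I}}_{\tau}^{\textnormal{GL}}$ on polynomials, and then \eqref{eq:interpolation_error}, yields the stated bound.

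The remaining two terms carry both a spatial and a temporal error. For $\bm{T}_{\text{II}}^{n}=\bar{\rho}(\partial_t^2\bm{u}-\partial_t\bm{z}_2)$ I would use $\bm{z}_2=\bm{\mathcal{I}}_{\tau}^{\textnormal{GL}}(\bm{\mathcal{R}}_h\partial_t\bm{u})$ and the commutation properties to split
\[
\partial_t^2\bm{u}-\partial_t\bm{z}_2 = (\mathrm{Id}-\bm{\mathcal{R}}_h)\,\partial_t\bm{\mathcal{I}}_{\tau}^{\textnormal{GL}}\partial_t\bm{u} + \partial_t\bigl(\partial_t\bm{u}-\bm{\mathcal{I}}_{\tau}^{\textnormal{GL}}\partial_t\bm{u}\bigr).
\]
The first, purely spatial, summand is controlled by Lemma~\ref{lem:dg_projection_errors} together with the stability of $\bm{\mathcal{I}}_{\tau}^{\textnormal{GL}}$, producing the factor $h^{\ell+1}\norm{\partial_t^2\bm{u}}_{L^2(I_n;\bm{H}^{\ell+2})}$. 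For the second summand, tested against $\bm{\varphi}_{\tau,h}^{\bm{u}}\in\mathbb{P}_{k-1}(I_n;\bm{U}_h)$, I would integrate by parts in time: the boundary contributions vanish because the Gauss--Lobatto interpolation error of $\partial_t\bm{u}$ is zero at the endpoints $t_{n-1},t_n$, which are Gauss--Lobatto nodes, leaving $-\int_{I_n}\scalar{\partial_t\bm{u}-\bm{\mathcal{I}}_{\tau}^{\textnormal{GL}}\partial_t\bm{u}}{\partial_t\bm{\varphi}_{\tau,h}^{\bm{u}}}\,\mathrm{d}t$. The term $\int_{I_n}\scalar{\partial_t\bm{\eta}^{\bm{w}}}{\bm{\varphi}_{\tau,h}^{\bm{u}}}\,\mathrm{d}t$, with $\bm{\eta}^{\bm{w}}=\bm{w}-\bm{\mathcal{I}}_{\tau}^{\textnormal{GL}}\bm{\mathcal{S}}_h\bm{w}$, is handled identically, splitting off $(\mathrm{Id}-\bm{\mathcal{S}}_h)\partial_t\bm{w}$ (giving $h^{\ell+1}\norm{\partial_t\bm{w}}_{L^2(I_n;\bm{H}^{\ell+1})}$ by Lemma~\ref{lem:dg_projection_errors}) and integrating by parts in the temporal remainder.

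The point that needs care, and which I expect to be the main obstacle, is to obtain the \emph{sharp} temporal order $\tau_n^{k+1}$ with the time-regularity $\partial_t^{k+3}\bm{u}$ and $\partial_t^{k+2}\bm{w}$ stated in the lemma. A crude bound that combines the inverse estimate $\norm{\partial_t\bm{\varphi}_{\tau,h}^{\bm{u}}}_{L^2(I_n)}\le c\tau_n^{-1}\norm{\bm{\varphi}_{\tau,h}^{\bm{u}}}_{L^2(I_n)}$ (cf.~Lemma~\ref{lem:Linfty_L2_inverse_inequality}) with \eqref{eq:interpolation_error} loses one power of $\tau_n$. The missing order is recovered by the superconvergence of Gauss--Lobatto interpolation: since $\partial_t\bm{\varphi}_{\tau,h}^{\bm{u}}\in\mathbb{P}_{k-2}$, the product $\scalar{\bm{\mathcal{I}}_{\tau}^{\textnormal{GL}}\partial_t\bm{u}}{\partial_t\bm{\varphi}_{\tau,h}^{\bm{u}}}$ has temporal degree at most $2k-2$, so the $(k{+}1)$-point Gauss--Lobatto rule integrates it exactly, and since the interpolant agrees with $\partial_t\bm{u}$ at all quadrature nodes, the temporal remainder equals the Gauss--Lobatto quadrature error of $\scalar{\partial_t\bm{u}}{\partial_t\bm{\varphi}_{\tau,h}^{\bm{u}}}$. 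Estimating that quadrature error trades the missing power of $\tau_n$ for one additional time derivative, which is precisely the origin of the indices $k+3$ and $k+2$. Keeping this superconvergence argument aligned with the commutations of $\bm{\mathcal{R}}_h$, $\bm{\mathcal{S}}_h$ and $\bm{\mathcal{A}}_h$, and accounting for the resulting regularity exponents correctly, is the delicate part; everything else is routine and follows \cite[Lem.~4.3]{Bause2024Conv}.
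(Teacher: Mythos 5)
Your plan follows the same route the paper itself takes: the paper offers no self-contained proof of this lemma, appealing verbatim to the arguments of \cite[Lem.~4.3]{Bause2024Conv}, and your scheme --- commuting the time-independent operators $\bm{\mathcal{R}}_{h}$, $\bm{\mathcal{S}}_{h}$, $\bm{\mathcal{A}}_{h}$ through $\partial_{t}$, $\int_{t_{n-1}}^{t}$ and $\bm{\mathcal{I}}_{\tau}^{\text{GL}}$, then splitting each term into a spatial part handled by Lemma~\ref{lem:dg_projection_errors} and a temporal part handled by \eqref{eq:interpolation_error} --- is exactly that argument. For $\bm{T}_{\text{II}}^{n}$, $\bm{T}_{\text{IV}}^{n}$ and the $\partial_{t}\bm{\eta}^{\bm{w}}$ term your indices come out right, and you correctly identify the crucial superconvergence mechanism: after integration by parts (admissible because the endpoints are Gauss--Lobatto nodes), Gauss--Lobatto exactness on $\mathbb{P}_{2k-1}$ converts the temporal remainder into the quadrature error of $\scalar{\partial_{t}\bm{u}}{\partial_{t}\bm{\varphi}_{\tau,h}^{\bm{u}}}$. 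One caveat: that quadrature error must be estimated by Bramble--Hilbert in the smooth factor alone, i.e.\ for fixed $w \in \mathbb{P}_{k-2}(I_{n};\bm{L}^{2})$ the functional $g \mapsto \int_{I_{n}} \scalar{g}{w}\,\mathrm{d}t - Q_{n}^{\text{GL}}(\scalar{g}{w})$ vanishes for $g \in \mathbb{P}_{k+1}$ and is hence bounded by $c\,\tau_{n}^{k+2}\norm{\partial_{t}^{k+2}g}_{L^{2}(I_{n};\bm{L}^{2})}\norm{w}_{L^{2}(I_{n};\bm{L}^{2})}$, which combined with the Markov-type inverse estimate $\norm{\partial_{t}\bm{\varphi}_{\tau,h}^{\bm{u}}}_{L^{2}} \le c\,\tau_{n}^{-1}\norm{\bm{\varphi}_{\tau,h}^{\bm{u}}}_{L^{2}}$ (this is not Lemma~\ref{lem:Linfty_L2_inverse_inequality}, though equally standard) yields the stated terms. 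If you instead apply a standard Peano bound to the full integrand and Leibniz-differentiate the product, inverse estimates on the derivatives of $\bm{\varphi}_{\tau,h}^{\bm{u}}$ leave a sum of intermediate terms $\tau_{n}^{j}\norm{\partial_{t}^{j+2}\bm{u}}_{L^{2}(I_{n};\bm{L}^{2})}$, $j \le k+1$, not the single stated term; your sketch leaves this choice open.

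The genuine flaw is in $\bm{T}_{\text{III}}^{n}$. The chain you describe --- Cauchy--Schwarz on the antiderivative, $C^{0}\!\to\!L^{2}$ stability of $\bm{\mathcal{I}}_{\tau}^{\text{GL}}$, then \eqref{eq:interpolation_error} at full order applied to $\bm{\mathcal{A}}_{h}\partial_{t}\bm{u}$ --- produces
\begin{align*}
\norm{\bm{\mathcal{A}}_{h}\bm{T}_{\text{III}}^{n}}_{L^{2}(I_{n};\bm{L}^{2})}
\le c\,\tau_{n}\cdot\tau_{n}^{k+1}\norm{\bm{\mathcal{A}}_{h}\partial_{t}^{k+2}\bm{u}}_{L^{2}(I_{n};\bm{L}^{2})},
\end{align*}
since interpolating $\bm{\mathcal{A}}_{h}\partial_{t}\bm{u}$ at order $k+1$ costs $k+1$ time derivatives of $\partial_{t}\bm{u}$. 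This is not the stated bound $c\,\tau_{n}^{k+1}\norm{\bm{\mathcal{A}}_{h}\partial_{t}^{k+1}\bm{u}}_{L^{2}(I_{n};\bm{L}^{2})}$: it carries one extra time derivative, a regularity the paper deliberately avoids (the corresponding entry of $\mathcal{E}_{t}^{n,1}$ in Lemma~\ref{lem:dg_estimate_tf1} is $\norm{\partial_{t}^{k+1}\bm{u}}_{L^{2}(I_{n};\bm{H}^{2})}$, not $\norm{\partial_{t}^{k+2}\bm{u}}_{L^{2}(I_{n};\bm{H}^{2})}$), so your version of the lemma would propagate stronger regularity requirements into the main theorem. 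The repair is simple but must be stated: since the time integration already gains one factor $\tau_{n}$, invoke the interpolation estimate one order below maximal, $\norm{f-\bm{\mathcal{I}}_{\tau}^{\text{GL}}f}_{L^{2}(I_{n};\bm{L}^{2})}\le c\,\tau_{n}^{k}\norm{\partial_{t}^{k}f}_{L^{2}(I_{n};\bm{L}^{2})}$ (valid by Bramble--Hilbert at order $k$, as $\bm{\mathcal{I}}_{\tau}^{\text{GL}}$ reproduces $\mathbb{P}_{k}$), with $f=\bm{\mathcal{A}}_{h}\partial_{t}\bm{u}$; then $\tau_{n}\cdot\tau_{n}^{k}\norm{\bm{\mathcal{A}}_{h}\partial_{t}^{k+1}\bm{u}}_{L^{2}(I_{n};\bm{L}^{2})}$ is exactly the claim. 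Equivalently, apply Bramble--Hilbert at order $k$ directly to the map $\bm{u}\mapsto\int_{t_{n-1}}^{t}\bigl(\partial_{t}\bm{u}-\bm{\mathcal{I}}_{\tau}^{\text{GL}}\partial_{t}\bm{u}\bigr)\,\mathrm{d}s$, which annihilates $\mathbb{P}_{k+1}$ and a fortiori $\mathbb{P}_{k}$. Everything else in your plan is sound.
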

    
    \begin{lemma}[Estimates for \eqref{eq:dg_var_eqn-3}]
        \label{lem:dg_estimates2}
        For all $ \bm{\varphi}_{\tau, h}^{\bm{w}} \in \mathbb{P}_{k-1}(I_{n}; \bm{W}_{h}) $, the following estimates are valid:
        \begin{align*}
            \abs{\int_{I_{n}} \scalar{\partial_{t} \bm{\eta}^{\bm{v}}}{\bm{\varphi}_{\tau, h}^{\bm{w}}} \mathrm{d}t}
            & \leq c \left( \tau_{n}^{k+1} \norm{\partial_{t}^{k+3} \bm{u}}_{L^{2}(I_{n}; \bm{L}^{2})}
            + h^{\ell+1} \norm{\partial_{t}^{2} \bm{u}}_{L^{2}(I_{n}; \bm{H}^{\ell+2})} \right) \norm{\bm{\varphi}_{\tau, h}^{\bm{w}}}_{L^{2}(I_{n}; \bm{L}^{2})}, \\
            \abs{\int_{I_{n}} \scalar{\partial_{t} \bm{\eta}^{\bm{w}}}{\bm{\varphi}_{\tau, h}^{\bm{w}}} \mathrm{d}t}
            & \leq c \left( \tau_{n}^{k+1} \norm{\partial_{t}^{k+2} \bm{w}}_{L^{2}(I_{n}; \bm{L}^{2})}
            + h^{\ell+1} \norm{\partial_{t} \bm{w}}_{L^{2}(I_{n}; \bm{H}^{\ell+1})} \right) \norm{\bm{\varphi}_{\tau, h}^{\bm{w}}}_{L^{2}(I_{n}; \bm{L}^{2})}, \\
            \abs{\int_{I_{n}} \scalar{\bm{K}^{-1} \bm{\eta}^{\bm{w}}}{\bm{\varphi}_{\tau, h}^{\bm{w}}} \mathrm{d}t}
            & \leq c \left( \tau_{n}^{k+1} \norm{\partial_{t}^{k+1} \bm{w}}_{L^{2}(I_{n}; \bm{L}^{2})}
            + h^{\ell+1} \norm{\bm{w}}_{L^{2}(I_{n}; \bm{H}^{\ell+1})} \right. \\
            & \quad \left. + \tau_{n} h^{\ell+1} \norm{\partial_{t} \bm{w}}_{L^{2}(I_{n}; \bm{H}^{\ell+1})} \right) \norm{\bm{\varphi}_{\tau, h}^{\bm{w}}}_{L^{2}(I_{n}; \bm{L}^{2})}.
        \end{align*}
    \end{lemma}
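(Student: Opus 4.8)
The plan is to treat the three terms separately, exploiting that the first two are structurally identical to bounds already furnished by Lemma~\ref{lem:dg_estimates1}, so that only the third genuinely requires a fresh argument. The pivotal observation for the first term is the algebraic identity $\partial_{t}\bm{\eta}^{\bm{v}} = \bar{\rho}^{-1}\bm{T}_{\text{II}}^{n}$: since $\bm{\eta}^{\bm{v}} = \bm{v} - \bm{z}_{2}$ and $\bm{v} = \partial_{t}\bm{u}$, we have $\partial_{t}\bm{\eta}^{\bm{v}} = \partial_{t}^{2}\bm{u} - \partial_{t}\bm{z}_{2}$, which is exactly $\bar{\rho}^{-1}\bm{T}_{\text{II}}^{n}$ by the definition of $\bm{T}_{\text{II}}^{n}$ in Lemma~\ref{lem:dg_variational_problem}. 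For the second term the relevant quantity $\partial_{t}\bm{\eta}^{\bm{w}}$ is \emph{literally} the integrand appearing in the fourth bound of Lemma~\ref{lem:dg_estimates1}.

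For the first estimate I would write $\int_{I_{n}}\scalar{\partial_{t}\bm{\eta}^{\bm{v}}}{\bm{\varphi}_{\tau,h}^{\bm{w}}}\,\mathrm{d}t = \bar{\rho}^{-1}\int_{I_{n}}\scalar{\bm{T}_{\text{II}}^{n}}{\bm{\varphi}_{\tau,h}^{\bm{w}}}\,\mathrm{d}t$ and invoke the first bound of Lemma~\ref{lem:dg_estimates1}; the only point to verify is that its right-hand side depends on the test function solely through $\norm{\bm{\varphi}_{\tau,h}^{\bm{w}}}_{L^{2}(I_{n};\bm{L}^{2})}$. This is indeed the case, because the derivation of Lemma~\ref{lem:dg_estimates1} uses the test function only through its time-polynomial degree (at most $k-1$) and its values in $\bm{L}^{2}(\Omega)$ --- both of which are shared by $\bm{\varphi}_{\tau,h}^{\bm{w}} \in \mathbb{P}_{k-1}(I_{n};\bm{W}_{h}) \subset L^{2}(I_{n};\bm{L}^{2})$ --- so the bound transfers verbatim from $\bm{\varphi}_{\tau,h}^{\bm{u}}$ to $\bm{\varphi}_{\tau,h}^{\bm{w}}$, the harmless factor $\bar{\rho}^{-1}$ being absorbed into $c$. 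By the same $L^{2}(I_{n};\bm{L}^{2})$-norm observation, the second estimate follows by copying the fourth bound of Lemma~\ref{lem:dg_estimates1} with $\bm{\varphi}_{\tau,h}^{\bm{w}}$ in place of $\bm{\varphi}_{\tau,h}^{\bm{u}}$. The mechanism behind both (inherited from Lemma~\ref{lem:dg_estimates1} and ultimately from~\cite{Bause2024Conv,Karakashian2005Conv}) is the splitting of each quantity into a spatial projection error, controlled via Lemma~\ref{lem:dg_projection_errors}, and a temporal part, where the pairing of $\partial_{t}(\cdot - \bm{\mathcal{I}}_{\tau}^{\text{GL}}(\cdot))$ against a polynomial of time-degree $\le k-1$ recovers the \emph{full} order $\tau_{n}^{k+1}$ (and hence the derivative orders $\partial_{t}^{k+3}\bm{u}$, $\partial_{t}^{k+2}\bm{w}$) rather than the naive $\tau_{n}^{k}$; since Lemma~\ref{lem:dg_estimates1} is already at my disposal I need not reprove this.

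The third estimate is the only genuinely new one, and it is the simplest since it carries no time derivative. I would apply the Cauchy--Schwarz inequality in space-time together with the boundedness of the symmetric positive definite tensor $\bm{K}^{-1}$ to obtain $\abs{\int_{I_{n}}\scalar{\bm{K}^{-1}\bm{\eta}^{\bm{w}}}{\bm{\varphi}_{\tau,h}^{\bm{w}}}\,\mathrm{d}t} \le c\,\norm{\bm{\eta}^{\bm{w}}}_{L^{2}(I_{n};\bm{L}^{2})}\norm{\bm{\varphi}_{\tau,h}^{\bm{w}}}_{L^{2}(I_{n};\bm{L}^{2})}$, and then insert the bound for $\norm{\bm{\eta}^{\bm{w}}}_{L^{2}(I_{n};\bm{L}^{2})} = \norm{\bm{w} - \bm{z}_{3}}_{L^{2}(I_{n};\bm{L}^{2})}$ from Lemma~\ref{lem:dg_projection_errors_z} in the case $s=2$, whose constants $\mathcal{C}_{t,2}^{n,5}$ and $\mathcal{C}_{\mathbf{x},2}^{n,5}$ reproduce precisely the claimed right-hand side. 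Given the availability of Lemmas~\ref{lem:dg_estimates1} and~\ref{lem:dg_projection_errors_z}, I expect no serious obstacle; the only real care needed is the bookkeeping to confirm that the stated derivative orders and spatial Sobolev indices line up exactly with those produced by the inherited estimates, which they do.
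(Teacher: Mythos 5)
Your proof is correct. Note that the paper itself gives no explicit argument for this lemma: it asserts that all three estimate lemmas follow ``by the same arguments as in \cite[Lem.~4.3]{Bause2024Conv}'', i.e., by redoing, term by term, the splitting of each $\bm{\eta}$-component into a spatial projection error (handled via Lemma~\ref{lem:dg_projection_errors}) and a temporal Gauss--Lobatto interpolation error (handled via \eqref{eq:interpolation_error}, with the extra power of $\tau_{n}$ gained when time derivatives are tested against polynomials of degree $k-1$). Your route is a reduction rather than a re-derivation, and it works: the identity $\partial_{t}\bm{\eta}^{\bm{v}} = \partial_{t}^{2}\bm{u} - \partial_{t}\bm{z}_{2} = \bar{\rho}^{-1}\bm{T}_{\text{II}}^{n}$ is exactly right, and the transfer of the first and fourth bounds of Lemma~\ref{lem:dg_estimates1} from test functions in $\mathbb{P}_{k-1}(I_{n};\bm{U}_{h})$ to test functions in $\mathbb{P}_{k-1}(I_{n};\bm{W}_{h})$ is even more immediate than you make it, since the paper defines $\bm{U}_{h} = \bm{V}_{h} = \bm{W}_{h}$ to be one and the same space, so the two test classes literally coincide; no inspection of how the proof of Lemma~\ref{lem:dg_estimates1} uses its test function is required (though your observation that it enters only through its polynomial degree and its $L^{2}(I_{n};\bm{L}^{2})$ norm is also correct, and would make the argument robust if the spaces were distinct). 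Your treatment of the third estimate --- Cauchy--Schwarz with boundedness of the constant SPD tensor $\bm{K}^{-1}$, followed by Lemma~\ref{lem:dg_projection_errors_z} with $s=2$, whose constants $\mathcal{C}_{t,2}^{n,5}$ and $\mathcal{C}_{\mathbf{x},2}^{n,5}$ reproduce the claimed right-hand side verbatim --- is precisely what the underlying argument in the reference amounts to. What your organization buys is brevity and explicit internal dependencies; what the paper's uniform appeal to \cite{Bause2024Conv} buys is that the three lemmas stand independently of one another, so that Lemma~\ref{lem:dg_estimates2} does not rest on Lemma~\ref{lem:dg_estimates1} having been established first.
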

    
    \begin{lemma}[Estimates for \eqref{eq:dg_var_eqn-4}]
        \label{lem:dg_estimates3}
        For all $ \varphi_{\tau, h}^{p} \in \mathbb{P}_{k-1}(I_{n}; P_{h}) $, there holds
        \begin{align*}
            \abs{\int_{I_{n}} \scalar{\partial_{t} \eta^{p}}{\varphi_{\tau, h}^{p}} \mathrm{d}t}
            & \leq c \tau_{n}^{k+1} \norm{\partial_{t}^{k+2} p}_{L^{2}(I_{n}; L^{2})} \norm{\varphi_{\tau, h}^{p}}_{L^{2}(I_{n}; L^{2})}, \\
            \abs{\int_{I_{n}} \scalar{\mathrm{div}(\partial_{t} \bm{\eta}^{\bm{u}})}{\varphi_{\tau, h}^{p}} \mathrm{d}t}
            & \leq c \left( \tau_{n}^{k+1} \norm{\partial_{t}^{k+2} \bm{u}}_{L^{2}(I_{n}; \bm{U}_{h})}
            + h^{\ell+1} \norm{\partial_{t} \bm{u}}_{L^{2}(I_{n}; \bm{H}^{\ell+2})} \right) \norm{\varphi_{\tau, h}^{p}}_{L^{2}(I_{n}; L^{2})}, \\
            \abs{\int_{I_{n}} \scalar{\mathrm{div}(\bm{\eta}^{\bm{w}})}{\varphi_{\tau, h}^{p}} \mathrm{d}t}
            & \leq c \tau_{n}^{k+1} \norm{\partial_{t}^{k+1} \bm{w}}_{L^{2}(I_{n}; \bm{H}^{1})} \norm{\varphi_{\tau, h}^{p}}_{L^{2}(I_{n}; L^{2})}.
        \end{align*}
    \end{lemma}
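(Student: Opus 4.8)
The plan is to treat the three bounds separately, in each case splitting the relevant error into a purely spatial projection part and a purely temporal interpolation part, e.g. $\eta^{p}=(p-\mathcal{P}_{h}p)+(\text{Id}-\mathcal{I}_{\tau}^{\text{GL}})\mathcal{P}_{h}p$, and analogously $\bm{\eta}^{\bm{w}}=(\bm{w}-\bm{\mathcal{S}}_{h}\bm{w})+(\text{Id}-\mathcal{I}_{\tau}^{\text{GL}})\bm{\mathcal{S}}_{h}\bm{w}$ and $\bm{\eta}^{\bm{u}}=\bm{u}-\bm{z}_{1}$, using Definition~\ref{def:dg_definition_z}. The recurring mechanism is that the spatial part is annihilated against the test function through the defining orthogonality of the spatial projectors, so that only a temporal interpolation error survives, to be controlled by \eqref{eq:interpolation_error}. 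Throughout I will use that the temporal operators $\mathcal{I}_{\tau}^{\text{GL}}$ and $\Pi_{\tau}^{k-1}$ commute with $\mathrm{div}$ and with the time-independent spatial projectors, that $\Pi_{\tau}^{k-1}$ is $L^{2}$-stable, and that $\norm{\mathrm{div}\bm{y}}_{L^{2}(\Omega)}\le c\norm{\bm{y}}_{\bm{U}_{h}}$.

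For the third estimate I would observe that since $\varphi_{\tau,h}^{p}(t)\in P_{h}$, the defining property \eqref{eq:dg_projection_Sh} of $\bm{\mathcal{S}}_{h}$ gives $\scalar{\mathrm{div}(\bm{w}-\bm{\mathcal{S}}_{h}\bm{w})}{\varphi_{\tau,h}^{p}}=0$ pointwise in time, which explains the absence of an $h$-term. For the remaining temporal part I commute $\mathrm{div}$ through $\mathcal{I}_{\tau}^{\text{GL}}$, apply \eqref{eq:interpolation_error} to $\mathrm{div}\bm{\mathcal{S}}_{h}\bm{w}$, and use $\mathrm{div}\bm{\mathcal{S}}_{h}=\mathcal{P}_{h}\mathrm{div}$ (again \eqref{eq:dg_projection_Sh}) together with $L^{2}$-stability of $\mathcal{P}_{h}$ to bound $\norm{\partial_{t}^{k+1}\mathrm{div}\bm{\mathcal{S}}_{h}\bm{w}}_{L^{2}}\le\norm{\partial_{t}^{k+1}\bm{w}}_{\bm{H}^{1}}$; Cauchy--Schwarz then closes it.

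For the second estimate the key structural input is Lemma~\ref{lem:dg_z_property}: since $\partial_{t}\bm{z}_{1}\in\mathbb{P}_{k-1}(I_{n};\bm{V}_{h})$, that lemma forces $\partial_{t}\bm{z}_{1}=\Pi_{\tau}^{k-1}\bm{z}_{2}$ with $\bm{z}_{2}=\bm{\mathcal{I}}_{\tau}^{\text{GL}}\bm{\mathcal{R}}_{h}\partial_{t}\bm{u}$. Inserting $\Pi_{\tau}^{k-1}$ (legitimate since $\varphi_{\tau,h}^{p}\in\mathbb{P}_{k-1}$) and commuting it with $\mathrm{div}$, the integrand reduces to $\scalar{\mathrm{div}(\partial_{t}\bm{u}-\bm{z}_{2})}{\varphi_{\tau,h}^{p}}$ up to the stable projection. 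I would then split $\partial_{t}\bm{u}-\bm{z}_{2}=(\text{Id}-\bm{\mathcal{R}}_{h})\partial_{t}\bm{u}+(\text{Id}-\bm{\mathcal{I}}_{\tau}^{\text{GL}})\bm{\mathcal{R}}_{h}\partial_{t}\bm{u}$: the first piece yields the $h^{\ell+1}\norm{\partial_{t}\bm{u}}_{\bm{H}^{\ell+2}}$ term via Lemma~\ref{lem:dg_projection_errors} and $\norm{\mathrm{div}\bm{y}}_{L^{2}}\le c\norm{\bm{y}}_{\bm{U}_{h}}$, while the second, after commuting $\mathrm{div}$ through $\bm{\mathcal{I}}_{\tau}^{\text{GL}}$, applying \eqref{eq:interpolation_error} to $\mathrm{div}\bm{\mathcal{R}}_{h}\partial_{t}\bm{u}$ and using $\bm{U}_{h}$-stability of $\bm{\mathcal{R}}_{h}$, produces $\tau_{n}^{k+1}\norm{\partial_{t}^{k+2}\bm{u}}_{\bm{U}_{h}}$. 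Note that here $\partial_{t}$ has effectively been absorbed into the construction of $\bm{z}_{1}$, so no superconvergence is needed.

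The first estimate is the delicate one, and I expect it to be the main obstacle. After the splitting, $\partial_{t}(p-\mathcal{P}_{h}p)=(\text{Id}-\mathcal{P}_{h})\partial_{t}p$ is $L^{2}(\Omega)$-orthogonal to $\varphi_{\tau,h}^{p}\in P_{h}$ by \eqref{eq:dg_projection_Ph} and drops out, leaving $\int_{I_{n}}\scalar{\partial_{t}\psi}{\varphi_{\tau,h}^{p}}\mathrm{d}t$ with $\psi:=(\text{Id}-\mathcal{I}_{\tau}^{\text{GL}})\mathcal{P}_{h}p$. A direct Cauchy--Schwarz estimate would only give order $\tau_{n}^{k}\norm{\partial_{t}^{k+1}p}$, one power short of the claim, so a superconvergence argument is unavoidable. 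Because the Gauss--Lobatto nodes include the interval endpoints, $\psi(t_{n-1})=\psi(t_{n})=0$, so integration by parts gives $\int_{I_{n}}\scalar{\partial_{t}\psi}{\varphi_{\tau,h}^{p}}=-\int_{I_{n}}\scalar{\psi}{\partial_{t}\varphi_{\tau,h}^{p}}$ with $\partial_{t}\varphi_{\tau,h}^{p}\in\mathbb{P}_{k-2}(I_{n};P_{h})$. The crucial fact is that the functional $f\mapsto\int_{I_{n}}\scalar{(\text{Id}-\mathcal{I}_{\tau}^{\text{GL}})f}{r}\mathrm{d}t$ vanishes for all $f\in\mathbb{P}_{k+1}(I_{n})$ and $r\in\mathbb{P}_{k-2}(I_{n})$, since for such $f$ the interpolation error is a scalar multiple of the Gauss--Lobatto nodal polynomial, which is $L^{2}(I_{n})$-orthogonal to $\mathbb{P}_{k-2}$. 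A Bramble--Hilbert and scaling argument then yields $\abs{\int_{I_{n}}\scalar{\psi}{r}}\le c\tau_{n}^{k+2}\norm{\partial_{t}^{k+2}\mathcal{P}_{h}p}_{L^{2}(I_{n};L^{2})}\norm{r}_{L^{2}(I_{n};L^{2})}$, and combining this with the temporal inverse inequality $\norm{\partial_{t}\varphi_{\tau,h}^{p}}_{L^{2}}\le c\tau_{n}^{-1}\norm{\varphi_{\tau,h}^{p}}_{L^{2}}$ and the $L^{2}$-stability of $\mathcal{P}_{h}$ delivers exactly the stated $\tau_{n}^{k+1}\norm{\partial_{t}^{k+2}p}$ bound. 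This superconvergence step---equivalently, that $\partial_{t}\mathcal{I}_{\tau}^{\text{GL}}$ and $\Pi_{\tau}^{k-1}\partial_{t}$ agree to one higher order, in the spirit of Lemma~\ref{lem:local_L2_projection}---is where all the subtlety lies; the remaining manipulations are routine.
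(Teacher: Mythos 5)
Your proof is correct and reconstructs essentially the argument the paper itself relies on: the paper offers no details for this lemma, deferring to \cite[Lem.~4.3]{Bause2024Conv}, and the technique there is exactly your combination of space/time splitting, annihilation of the spatial projection errors through the orthogonalities \eqref{eq:dg_projection_Sh} and \eqref{eq:dg_projection_Ph}, the interpolation bound \eqref{eq:interpolation_error}, and the Gauss--Lobatto superconvergence step (integration by parts using that the error vanishes at the endpoints, plus orthogonality of the nodal polynomial to $\mathbb{P}_{k-2}$, Bramble--Hilbert, and the temporal inverse estimate) for the $\partial_{t}\eta^{p}$ term. In particular, your identifications $\partial_{t}\bm{z}_{1}=\bm{\Pi}_{\tau}^{k-1}\bm{z}_{2}$ via Lemma~\ref{lem:dg_z_property} and $\mathrm{div}\,\bm{\mathcal{S}}_{h}=\mathcal{P}_{h}\,\mathrm{div}$ are precisely the mechanisms that produce the stated $\tau_{n}^{k+1}$- and $h^{\ell+1}$-orders (and explain the absence of an $h$-term in the third bound), so the proposal matches the intended proof.
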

    
    With a proper choice of test functions in \eqref{eq:dg_var_eqn}, the above lemmas allow for an estimation of the energy norm of the discrete part of the discretization error $ \bm{e}_{\tau, h}^{\bm{x}} $, cf.~\cite{Karakashian2005Conv, Bause2024Conv}.
    
    \begin{lemma}
        \label{lem:dg_estimate_tf1}
        Let
        \begin{align*}
            \delta_{n} & := \scalar{\alpha \eta^{p}(t_{n})}{\mathrm{div}(\bm{e}_{\tau, h}^{\bm{u}}(t_{n}))}, &
            \delta_{n-1}^{+} & := \scalar{\alpha \eta^{p}(t_{n-1}^{+})}{\mathrm{div}(\bm{e}_{\tau, h}^{\bm{u}}(t_{n-1}^{+}))},
        \end{align*}
        and
        \begin{align*}
            \mathcal{E}_{t}^{n, 1} & :=
            \norm{\partial_{t}^{k+1} \bm{f}}_{L^{2}(I_{n}; \bm{L}^{2})}^{2}
            + \norm{\partial_{t}^{k+1} g}_{L^{2}(I_{n}; L^{2})}^{2}
            + \norm{\partial_{t}^{k+1} \bm{u}}_{L^{2}(I_{n}; \bm{H}^{2})}^{2}
            + \norm{\partial_{t}^{k+2} \bm{u}}_{L^{2}(I_{n}; \bm{U}_{h})}^{2}
            + \norm{\partial_{t}^{k+3} \bm{u}}_{L^{2}(I_{n}; \bm{L}^{2})}^{2} \\
            & \quad + \norm{\partial_{t}^{k+1} \bm{w}}_{L^{2}(I_{n}; \bm{H}^{1})}^{2}
            + \norm{\partial_{t}^{k+2} \bm{w}}_{L^{2}(I_{n}; \bm{L}^{2})}^{2}
            + \norm{\partial_{t}^{k+1} p}_{L^{2}(I_{n}; H^{1})}^{2}
            + \norm{\partial_{t}^{k+2} p}_{L^{2}(I_{n}; L^{2})}^{2}, \\
            \mathcal{E}_{\mathbf{x}}^{n, 1} & :=
            \norm{\partial_{t} \bm{u}}_{L^{2}(I_{n}; \bm{H}^{\ell+2})}^{2}
            + \norm{\partial_{t}^{2} \bm{u}}_{L^{2}(I_{n}; \bm{H}^{\ell+2})}^{2}
            + \norm{\bm{w}}_{L^{2}(I_{n}; \bm{H}^{\ell+1})}^{2}
            + \norm{\partial_{t} \bm{w}}_{L^{2}(I_{n}; \bm{H}^{\ell+1})}^{2}
            + \norm{\partial_{t} p}_{L^{2}(I_{n}; H^{\ell+1})}^{2}.
        \end{align*}
       Then,
        \begin{align}
            \label{eq:dg_estimate_tf1}
            \opnorm{\bm{e}_{\tau, h}^{\bm{x}}(t_{n})}_{e}^{2}
            & \leq \opnorm{\bm{e}_{\tau, h}^{\bm{x}}(t_{n-1}^{+})}_{e}^{2}
            + c \opnorm{\bm{e}_{\tau, h}^{\bm{x}}}_{L^{2}(I_{n}; \bm{L}^{2})}^{2}
            + \delta_{n}
            - \delta_{n-1}^{+}
            + c \tau_{n}^{2(k+1)} \mathcal{E}_{t}^{n, 1}
            + c h^{2(\ell+1)} \mathcal{E}_{\mathbf{x}}^{n, 1}
        \end{align}
        holds for $n = 1, \ldots, N$.
    \end{lemma}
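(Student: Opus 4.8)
The plan is to test the error system~\eqref{eq:dg_var_eqn} with time derivatives, respectively $L^2$-in-time projections, of the discrete error itself, so that the left-hand side reassembles the energy increment $\opnorm{\bm{e}_{\tau, h}^{\bm{x}}(t_n)}_e^2 - \opnorm{\bm{e}_{\tau, h}^{\bm{x}}(t_{n-1}^+)}_e^2$, while every consistency contribution is moved to the right and controlled by Lemmas~\ref{lem:dg_estimates1}--\ref{lem:dg_estimates3}. Concretely, in~\eqref{eq:dg_var_eqn-1} I would take $\bm{\varphi}_{\tau, h}^{\bm{u}} = \partial_t \bm{e}_{\tau, h}^{\bm{u}} \in \mathbb{P}_{k-1}(I_n; \bm{U}_h)$, and in~\eqref{eq:dg_var_eqn-3} and~\eqref{eq:dg_var_eqn-4} the projections $\bm{\varphi}_{\tau, h}^{\bm{w}} = \Pi_{\tau}^{k-1} \bm{e}_{\tau, h}^{\bm{w}}$ and $\varphi_{\tau, h}^{p} = \Pi_{\tau}^{k-1} e_{\tau, h}^{p}$. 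These lie in the test space $\mathbb{P}_{k-1}$, although the errors themselves have temporal degree $k$, which is precisely the device that copes with the trial/test degree mismatch of the cGP($k$) scheme.

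The left-hand side is then reconstructed as follows. The elasticity contribution $\scalar{\bm{\mathcal{A}}_h \bm{e}_{\tau, h}^{\bm{u}}}{\partial_t \bm{e}_{\tau, h}^{\bm{u}}} = a_h(\bm{e}_{\tau, h}^{\bm{u}}, \partial_t \bm{e}_{\tau, h}^{\bm{u}})$ integrates exactly to $\tfrac12 a_h(\bm{e}_{\tau, h}^{\bm{u}}, \bm{e}_{\tau, h}^{\bm{u}})$ evaluated between the endpoints. For the $\bm{M}_\rho$-block and the storage term the integrands have temporal degree at most $2k-1$, so the $k$-point Gauss formula~\eqref{eq:quadrature_G} is exact; at the Gauss nodes Lemma~\ref{lem:dg_z_property_pointwise} applied to~\eqref{eq:dg_var_eqn-2} yields $\partial_t \bm{e}_{\tau, h}^{\bm{u}} = \bm{e}_{\tau, h}^{\bm{v}}$, while Lemma~\ref{lem:local_L2_projection} yields $\Pi_{\tau}^{k-1} \bm{e}_{\tau, h}^{\bm{w}} = \bm{e}_{\tau, h}^{\bm{w}}$ and $\Pi_{\tau}^{k-1} e_{\tau, h}^{p} = e_{\tau, h}^{p}$. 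Together with the symmetry of $\bm{M}_\rho$ and of $a_h$ these collapse the three tested mass/storage terms into the $\tfrac12$-difference of $\opnorm{\cdot}_e^2$, and the $\bm{K}^{-1}$-dissipation term, being nonnegative, is simply dropped.

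Next I would verify that the three pressure--divergence couplings cancel exactly. Because the spaces satisfy $\mathrm{div}(\bm{U}_h) \subseteq P_h$ and $\mathrm{div}(\bm{W}_h) \subseteq P_h$, the defining identity~\eqref{eq:local_L2_projection} of $\Pi_{\tau}^{k-1}$ (and its self-adjointness) permits replacing $e_{\tau, h}^{p}$ by $\Pi_{\tau}^{k-1} e_{\tau, h}^{p}$ when paired with $\mathrm{div}(\partial_t \bm{e}_{\tau, h}^{\bm{u}})$ and with $\mathrm{div}(\bm{e}_{\tau, h}^{\bm{w}})$, so the couplings from~\eqref{eq:dg_var_eqn-1} and~\eqref{eq:dg_var_eqn-3} match those from~\eqref{eq:dg_var_eqn-4} and vanish. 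The sole surviving displacement--pressure term on the right, $\int_{I_n} \scalar{\alpha \eta^{p}}{\mathrm{div}(\partial_t \bm{e}_{\tau, h}^{\bm{u}})} \mathrm{d}t$, I would integrate by parts in time to produce exactly $\delta_n - \delta_{n-1}^{+}$ together with $-\int_{I_n} \scalar{\alpha \partial_t \eta^{p}}{\mathrm{div}(\bm{e}_{\tau, h}^{\bm{u}})} \mathrm{d}t$, the latter absorbed via Young's inequality into $c\,\opnorm{\bm{e}_{\tau, h}^{\bm{x}}}_{L^2(I_n; \bm{L}^2)}^2$ plus a $\tau_n^{2(k+1)}$-term.

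Finally, each remaining right-hand contribution is of the form (consistency factor)$\times$(test function), bounded by Lemmas~\ref{lem:dg_estimates1}--\ref{lem:dg_estimates3} and by the interpolation estimate~\eqref{eq:interpolation_error} for $\bm{T}_{\mathrm{I}}^n$ and $T_{\mathrm{V}}^n$; the test-function norms are controlled through $\norm{\Pi_{\tau}^{k-1} \bm{e}_{\tau, h}^{\bm{w}}}_{L^2(I_n; \bm{L}^2)} \le \norm{\bm{e}_{\tau, h}^{\bm{w}}}_{L^2(I_n; \bm{L}^2)}$, the analogue for $e_{\tau, h}^{p}$, and $\norm{\partial_t \bm{e}_{\tau, h}^{\bm{u}}}_{L^2(I_n; \bm{L}^2)} \le c\,\opnorm{\bm{e}_{\tau, h}^{\bm{x}}}_{L^2(I_n; \bm{L}^2)}$, the last obtained by writing the integral via~\eqref{eq:quadrature_G}, using $\partial_t \bm{e}_{\tau, h}^{\bm{u}} = \bm{e}_{\tau, h}^{\bm{v}}$ at the Gauss nodes, and invoking the inverse estimate of Lemma~\ref{lem:Linfty_L2_inverse_inequality}. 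Young's inequality then collects everything into the claimed bound $c \tau_n^{2(k+1)} \mathcal{E}_{t}^{n, 1} + c h^{2(\ell+1)} \mathcal{E}_{\mathbf{x}}^{n, 1} + c\,\opnorm{\bm{e}_{\tau, h}^{\bm{x}}}_{L^2(I_n; \bm{L}^2)}^2$. I expect the main obstacle to be twofold: engineering the exact coupling cancellation under the degree mismatch (which is what forces the choice of $\Pi_{\tau}^{k-1}$-projected test functions and relies essentially on strong mass conservation), and treating the consistency coupling $\int_{I_n} \scalar{\eta^{p}}{\mathrm{div}(\bm{\varphi}_{\tau, h}^{\bm{w}})} \mathrm{d}t$, for which the naive bound exposes the uncontrolled factor $\norm{\mathrm{div}(\bm{\varphi}_{\tau, h}^{\bm{w}})}_{L^2(\Omega)}$. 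I would resolve the latter by splitting $\eta^{p} = (\mathrm{id} - \mathcal{I}_{\tau}^{\mathrm{GL}}) p + \mathcal{I}_{\tau}^{\mathrm{GL}}(\mathrm{id} - \mathcal{P}_h) p$: the second summand is pointwise in time $L^2(\Omega)$-orthogonal to $P_h \ni \mathrm{div}(\bm{\varphi}_{\tau, h}^{\bm{w}})$ and hence drops, while the first is globally $H^1$ in space, so an integration by parts (using $\bm{\varphi}_{\tau, h}^{\bm{w}} \cdot \bm{n} = 0$) trades $\mathrm{div}(\bm{\varphi}_{\tau, h}^{\bm{w}})$ for $\bm{\varphi}_{\tau, h}^{\bm{w}}$ and delivers the $\tau_n^{k+1} \norm{\partial_t^{k+1} p}_{L^2(I_n; H^1)}$ bound consistent with $\mathcal{E}_{t}^{n, 1}$.
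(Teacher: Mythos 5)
Your proposal is correct and takes essentially the same route as the paper's proof: your test function $\partial_{t}\bm{e}_{\tau,h}^{\bm{u}}$ in \eqref{eq:dg_var_eqn-1} is in fact identical to the paper's choice $\bm{\Pi}_{\tau}^{k-1}\bm{e}_{\tau,h}^{\bm{v}}$ (both are polynomials of degree $k-1$ agreeing at the $k$ Gauss points, by Lemma~\ref{lem:local_L2_projection} and Lemma~\ref{lem:dg_z_property_pointwise} applied to \eqref{eq:dg_var_eqn-2}), so the two arguments coincide up to reorganization. All key steps match the paper: the Gauss-quadrature/projection reconstruction of the energy increment, the cancellation of the pressure--divergence couplings, the temporal integration by parts yielding $\delta_{n}-\delta_{n-1}^{+}$, and the splitting of $\eta^{p}$ combined with spatial integration by parts (using $\mathrm{div}(\bm{W}_{h})=P_{h}$) for the flux coupling.
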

    
    \begin{proof}
        We use the test functions
        \begin{align}
            \label{eq:prf_dg_estimate_tf1-1}
            \bm{\varphi}_{\tau, h}^{\bm{x}} :=
            \begin{pmatrix}
                \bm{\varphi}_{\tau, h}^{\bm{u}} \\
                \bm{\varphi}_{\tau, h}^{\bm{v}} \\
                \bm{\varphi}_{\tau, h}^{\bm{w}} \\
                \varphi_{\tau, h}^{p}
            \end{pmatrix}
            =
            \begin{pmatrix}
                \bm{\Pi}_{\tau}^{k-1} \bm{e}_{\tau, h}^{\bm{v}} \\
                \bm{\Pi}_{\tau}^{k-1} \bm{\mathcal{A}}_{h} \bm{e}_{\tau, h}^{\bm{u}} \\
                \bm{\Pi}_{\tau}^{k-1} \bm{e}_{\tau, h}^{\bm{w}} \\
                \Pi_{\tau}^{k-1} e_{\tau, h}^{p}
            \end{pmatrix}
        \end{align}
        in \eqref{eq:dg_var_eqn}.
        By exploiting Lemma~\ref{lem:dg_z_property_pointwise} and \eqref{eq:dg_var_eqn-2}, it follows
        \begin{align}
            \label{eq:prf_dg_estimate_tf1-2}
            \partial_{t} \bm{e}_{\tau, h}^{\bm{u}}(t_{n, i}^{\text{G}}) = \bm{e}_{\tau, h}^{\bm{v}}(t_{n, i}^{\text{G}})
            \quad i = 1, \ldots, k, \, n = 1, \ldots, N.
        \end{align}
        Due to~\eqref{eq:local_L2_projection} we have
        \begin{align*}
            \int_{I_{n}} \scalar{\bm{K}^{-1} \bm{e}_{\tau, h}^{\bm{w}}}{\bm{\Pi}_{\tau}^{k-1} \bm{e}_{\tau, h}^{\bm{w}}} \mathrm{d}t
            = \int_{I_{n}} \scalar{\bm{K}^{-1} \bm{\Pi}_{\tau}^{k-1} \bm{e}_{\tau, h}^{\bm{w}}}{\bm{\Pi}_{\tau}^{k-1} \bm{e}_{\tau, h}^{\bm{w}}} \mathrm{d}t
            \geq 0.
        \end{align*}
        The exactness of~\eqref{eq:quadrature_G} for polynomials in $ \mathbb{P}_{2k-1}(I_{n}; \mathbb{R}) $ together with Lemma~\ref{lem:local_L2_projection} show
        \begin{align*}
            \int_{I_{n}} \scalar{s_{0} \partial_{t} e_{\tau, h}^{p}}{\Pi_{\tau}^{k-1} e_{\tau, h}^{p}} \mathrm{d}t
            = \int_{I_{n}} \scalar{s_{0} \partial_{t} e_{\tau, h}^{p}}{e_{\tau, h}^{p}} \mathrm{d}t
            = \frac{s_{0}}{2} \norm{e_{\tau, h}^{p}(t_{n})}_{L^{2}(\Omega)}^{2}
            - \frac{s_{0}}{2} \norm{e_{\tau, h}^{p}(t_{n-1}^{+})}_{L^{2}(\Omega)}^{2}.
        \end{align*}
        Similarly, we find
        \begin{align*}
            & \quad \int_{I_{n}} \scalar{\bar{\rho} \partial_{t} \bm{e}_{\tau, h}^{\bm{v}} + \rho_{f} \partial_{t} \bm{e}_{\tau, h}^{\bm{w}}}{\bm{\Pi}_{\tau}^{k-1} \bm{e}_{\tau, h}^{\bm{v}}}
            + \scalar{\rho_{f} \partial_{t} \bm{e}_{\tau, h}^{\bm{v}} + \rho_{w} \partial_{t} \bm{e}_{\tau, h}^{\bm{w}}}{\bm{\Pi}_{\tau}^{k-1} \bm{e}_{\tau, h}^{\bm{w}}} \mathrm{d}t \\
            & = \int_{I_{n}} \scalar{\bm{M}_{\rho}
                \begin{pmatrix}
                    \partial_{t} \bm{e}_{\tau, h}^{\bm{v}} \\
                    \partial_{t} \bm{e}_{\tau, h}^{\bm{w}}
                \end{pmatrix}
            }{
                \begin{pmatrix}
                    \bm{e}_{\tau, h}^{\bm{v}} \\
                    \bm{e}_{\tau, h}^{\bm{w}}
                \end{pmatrix}
            } \mathrm{d}t
            = \frac{1}{2} \norm{\bm{M}_{\rho}^{1/2}
                \begin{pmatrix}
                    \bm{e}_{\tau, h}^{\bm{v}}(t_{n}) \\
                    \bm{e}_{\tau, h}^{\bm{w}}(t_{n})
                \end{pmatrix}
            }_{\bm{L}^{2}(\Omega)}^{2}
            - \frac{1}{2} \norm{\bm{M}_{\rho}^{1/2}
                \begin{pmatrix}
                    \bm{e}_{\tau, h}^{\bm{v}}(t_{n-1}^{+}) \\
                    \bm{e}_{\tau, h}^{\bm{w}}(t_{n-1}^{+})
                \end{pmatrix}
            }_{\bm{L}^{2}(\Omega)}^{2}
        \end{align*}
        and
        \begin{align*}
            \int_{I_{n}} \scalar{\partial_{t} \bm{e}_{\tau, h}^{\bm{u}}}{\bm{\Pi}_{\tau}^{k-1} \bm{\mathcal{A}}_{h} \bm{e}_{\tau, h}^{\bm{u}}} \mathrm{d}t
            = \int_{I_{n}} \scalar{\partial_{t} \bm{e}_{\tau, h}^{\bm{u}}}{\bm{\mathcal{A}}_{h} \bm{e}_{\tau, h}^{\bm{u}}} \mathrm{d}t
            = \frac{1}{2} \, a_{h}(\bm{e}_{\tau, h}^{\bm{u}}(t_{n}), \bm{e}_{\tau, h}^{\bm{u}}(t_{n}))
            - \frac{1}{2} \, a_{h}(\bm{e}_{\tau, h}^{\bm{u}}(t_{n-1}^{+}), \bm{e}_{\tau, h}^{\bm{u}}(t_{n-1}^{+})).
        \end{align*}
        With~\eqref{eq:prf_dg_estimate_tf1-1} and~\eqref{eq:prf_dg_estimate_tf1-2} one obtains
        \begin{align*}
            \int_{I_{n}} \scalar{\bm{\mathcal{A}}_{h} \bm{e}_{\tau, h}^{\bm{u}}}{\bm{\Pi}_{\tau}^{k-1} \bm{e}_{\tau, h}^{\bm{v}}}
            - \scalar{\bm{e}_{\tau, h}^{\bm{v}}}{\bm{\Pi}_{\tau}^{k-1} \bm{\mathcal{A}}_{h} \bm{e}_{\tau, h}^{\bm{u}}} \mathrm{d}t & = 0, \\
            \int_{I_{n}} \scalar{\alpha \mathrm{div}(\partial_{t} \bm{e}_{\tau, h}^{\bm{u}})}{\Pi_{\tau}^{k-1} e_{\tau, h}^{p}}
            - \scalar{\alpha e_{\tau, h}^{p}}{\mathrm{div}(\bm{\Pi}_{\tau}^{k-1} \bm{e}_{\tau, h}^{\bm{v}})} \mathrm{d}t & = 0, \\
            \int_{I_{n}} \scalar{\mathrm{div}(\bm{e}_{\tau, h}^{\bm{w}})}{\Pi_{\tau}^{k-1} e_{\tau, h}^{p}}
            - \scalar{e_{\tau, h}^{p}}{\mathrm{div}(\bm{\Pi}_{\tau}^{k-1} \bm{e}_{\tau, h}^{\bm{w}})} \mathrm{d}t & = 0.
        \end{align*}
        Next, we use~\cite[Eq.~(4.36)]{Bause2024Conv} and integrate by parts
        \begin{align*}
            & \quad \int_{I_{n}} \scalar{\eta^{p}}{\mathrm{div}(\bm{\Pi}_{\tau}^{k-1} \bm{e}_{\tau, h}^{\bm{v}})} \mathrm{d}t
            = \int_{I_{n}} \scalar{\eta^{p}}{\mathrm{div}(\partial_{t} \bm{e}_{\tau, h}^{\bm{u}})} \mathrm{d}t \\
            & = - \int_{I_{n}} \scalar{\partial_{t} \eta^{p}}{\mathrm{div}(\bm{e}_{\tau, h}^{\bm{u}})} \mathrm{d}t
            + \scalar{\eta^{p}(t_{n})}{\mathrm{div}(\bm{e}_{\tau, h}^{\bm{u}}(t_{n}))}
            - \scalar{\eta^{p}(t_{n-1}^{+})}{\mathrm{div}(\bm{e}_{\tau, h}^{\bm{u}}(t_{n-1}^{+}))},
        \end{align*}
        where
        \begin{align*}
            \abs{\int_{I_{n}} \scalar{\partial_{t} \eta^{p}}{\mathrm{div}(\bm{e}_{\tau, h}^{\bm{u}})} \mathrm{d}t}
            & \leq c \left( \tau_{n}^{k+1} \norm{\partial_{t}^{k+2} p}_{L^{2}(I_{n}; L^{2})}
            + h^{\ell+1} \norm{\partial_{t} p}_{L^{2}(I_{n}; H^{\ell+1})} \right)
            \norm{\bm{e}_{\tau, h}^{\bm{u}}}_{L^{2}(I_{n}; \bm{U}_{h})}.
        \end{align*}
        Further, in view of $ \mathrm{div}(\bm{W}_{h}) = P_{h} $ and \eqref{eq:dg_projection_Ph}, we have
        \begin{align*}
            \int_{I_{n}} \scalar{\eta^{p}}{\mathrm{div}(\bm{\Pi}_{\tau}^{k-1} \bm{e}_{\tau, h}^{\bm{w}})} \mathrm{d}t
            & = \int_{I_{n}} \scalar{p - \mathcal{I}_{\tau}^{\text{GL}} p}{\mathrm{div}(\bm{\Pi}_{\tau}^{k-1} \bm{e}_{\tau, h}^{\bm{w}})} \mathrm{d}t
            = - \int_{I_{n}} \scalar{\nabla (p - \mathcal{I}_{\tau}^{\text{GL}} p)}{\bm{\Pi}_{\tau}^{k-1} \bm{e}_{\tau, h}^{\bm{w}}} \mathrm{d}t.
        \end{align*}
        Hence, estimate~\eqref{eq:interpolation_error} yields
        \begin{align*}
            \abs{\int_{I_{n}} \scalar{\nabla (p - \mathcal{I}_{\tau}^{\text{GL}} p)}{\bm{\Pi}_{\tau}^{k-1} \bm{e}_{\tau, h}^{\bm{w}}} \mathrm{d}t}
            & \leq c \tau_{n}^{k+1} \norm{\partial_{t}^{k+1} p}_{L^{2}(I_{n}; H^{1})} \norm{\bm{\Pi}_{\tau}^{k-1} \bm{e}_{\tau, h}^{\bm{w}}}_{L^{2}(I_{n}; \bm{L}^{2})}.
        \end{align*}
        Finally, with~\eqref{eq:interpolation_error}, Lemma~\ref{lem:dg_estimates1}--Lemma~\ref{lem:dg_estimates3} and Young's inequality
        we conclude assertion~\eqref{eq:dg_estimate_tf1}.
    \end{proof}
    
    Subsequently, we estimate the first term on the right-hand side of \eqref{eq:dg_estimate_tf1}.
    
    \begin{lemma}
        \label{lem:dg_estimate_limits}
        There holds
        \begin{align}
            \label{eq:dg_estimate_limits}
            \opnorm{\bm{e}_{\tau, h}^{\bm{x}}(t_{n-1}^{+})}_{e}^{2}
            & \leq (1 + \tau_{n-1}) \opnorm{\bm{e}_{\tau, h}^{\bm{x}}(t_{n-1})}_{e}^{2}
            + c (1 + \tau_{n-1}) \tau_{n-1}^{2(k+1)} \norm{\partial_{t}^{k+2} \bm{u}}_{L^{2}(I_{n-1}; \bm{U}_{h})}^{2}
        \end{align}
        for $ n = 2, \ldots, N $.
    \end{lemma}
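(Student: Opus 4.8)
The plan is to exploit that the cGP($k$) solution $(\bm{u}_{\tau,h},\bm{v}_{\tau,h},\bm{w}_{\tau,h},p_{\tau,h})$ is \emph{globally continuous} in time, so that every jump of the discrete error $\bm{e}_{\tau,h}^{\bm{x}}$ at the node $t_{n-1}$ comes exclusively from the reconstruction $(\bm{z}_1,\bm{z}_2,\bm{z}_3,z_4)$. The first observation is that $\bm{z}_2=\bm{\mathcal{I}}_{\tau}^{\text{GL}}(\bm{\mathcal{R}}_h\partial_t\bm{u})$, $\bm{z}_3=\bm{\mathcal{I}}_{\tau}^{\text{GL}}(\bm{\mathcal{S}}_h\bm{w})$ and $z_4=\mathcal{I}_{\tau}^{\text{GL}}(\mathcal{P}_h p)$ are Gauss--Lobatto interpolants of time-continuous functions; since the Gauss--Lobatto nodes contain both endpoints ($t_{n,0}^{\text{GL}}=t_{n-1}$, $t_{n,k}^{\text{GL}}=t_n$), these three interpolants reproduce their arguments at $t_{n-1}$ from either side and are therefore continuous there. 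Consequently $\bm{e}_{\tau,h}^{\bm{v}},\bm{e}_{\tau,h}^{\bm{w}},e_{\tau,h}^{p}$ do not jump at $t_{n-1}$, and by the definition of the energy norm only the displacement contribution $\tfrac12 a_h(\cdot,\cdot)$ can change across $t_{n-1}$.

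Next I would compute the displacement jump $\bm{J}:=\bm{z}_1(t_{n-1}^{+})-\bm{z}_1(t_{n-1})$. Using again the endpoint-exactness of Gauss--Lobatto interpolation, definition~\eqref{eq:dg_definition_z-2} gives $\bm{z}_1(t_{n-1}^{+})=\bm{\mathcal{R}}_h\bm{u}(t_{n-1})$ from $I_n$ (the integral term vanishes at the left endpoint), while from $I_{n-1}$ one obtains $\bm{z}_1(t_{n-1})=\int_{t_{n-2}}^{t_{n-1}}\bm{z}_2(s)\,\mathrm{d}s+\bm{\mathcal{R}}_h\bm{u}(t_{n-2})$. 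Since $\bm{\mathcal{R}}_h$ is linear and time-independent it commutes with integration in time, so $\bm{\mathcal{R}}_h\bm{u}(t_{n-1})-\bm{\mathcal{R}}_h\bm{u}(t_{n-2})=\int_{t_{n-2}}^{t_{n-1}}\bm{\mathcal{R}}_h\partial_t\bm{u}(s)\,\mathrm{d}s$, whence $\bm{J}=\int_{t_{n-2}}^{t_{n-1}}\bigl(\bm{\mathcal{R}}_h\partial_t\bm{u}-\bm{\mathcal{I}}_{\tau}^{\text{GL}}\bm{\mathcal{R}}_h\partial_t\bm{u}\bigr)(s)\,\mathrm{d}s$. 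That is, $\bm{J}$ is exactly the time-integral over $I_{n-1}$ of the Gauss--Lobatto interpolation error of $\bm{\mathcal{R}}_h\partial_t\bm{u}$.

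Then I would bound $\bm{J}$ in the $\bm{U}_h$-norm by Cauchy--Schwarz in time followed by the interpolation estimate~\eqref{eq:interpolation_error} (in the $\bm{U}_h$-norm) and the $\bm{U}_h$-stability of $\bm{\mathcal{R}}_h$, giving $\norm{\bm{J}}_{\bm{U}_h}\le c\,\tau_{n-1}^{k+3/2}\norm{\partial_t^{k+2}\bm{u}}_{L^{2}(I_{n-1};\bm{U}_h)}$. Writing $\bm{e}_{\tau,h}^{\bm{u}}(t_{n-1}^{+})=\bm{e}_{\tau,h}^{\bm{u}}(t_{n-1})+\bm{J}$ and using the continuity of the remaining components yields
\begin{align*}
\opnorm{\bm{e}_{\tau,h}^{\bm{x}}(t_{n-1}^{+})}_e^2
= \opnorm{\bm{e}_{\tau,h}^{\bm{x}}(t_{n-1})}_e^2
+ a_h\bigl(\bm{e}_{\tau,h}^{\bm{u}}(t_{n-1}),\bm{J}\bigr)
+ \tfrac12\,a_h(\bm{J},\bm{J}).
\end{align*}
Boundedness and coercivity of $a_h$ bound the cross term by $c\,\opnorm{\bm{e}_{\tau,h}^{\bm{x}}(t_{n-1})}_e\norm{\bm{J}}_{\bm{U}_h}$, and Young's inequality with a weight proportional to $\tau_{n-1}$ splits this into the factor $\tau_{n-1}\opnorm{\bm{e}_{\tau,h}^{\bm{x}}(t_{n-1})}_e^2$ plus $c\,\tau_{n-1}^{-1}\norm{\bm{J}}_{\bm{U}_h}^2$; the quadratic term $a_h(\bm{J},\bm{J})\le c\norm{\bm{J}}_{\bm{U}_h}^2$ is one power of $\tau_{n-1}$ higher. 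Inserting the bound on $\norm{\bm{J}}_{\bm{U}_h}$ produces $c(1+\tau_{n-1})\tau_{n-1}^{2(k+1)}\norm{\partial_t^{k+2}\bm{u}}_{L^{2}(I_{n-1};\bm{U}_h)}^2$ and the multiplicative factor $(1+\tau_{n-1})$ on the nodal energy, i.e.~exactly~\eqref{eq:dg_estimate_limits}. The only genuinely delicate step is the bookkeeping of the second paragraph: recognising that the jump lives solely in the displacement and that it equals the integrated interpolation error of $\bm{\mathcal{R}}_h\partial_t\bm{u}$; thereafter the argument is a routine use of $a_h$-continuity, $a_h$-coercivity, and Young's inequality. (The restriction $n\ge 2$ is natural, since for $n=1$ the left endpoint is $t_0=0$, where the initial error is instead controlled through the assumptions~\eqref{eq:dg_assumptions}.)
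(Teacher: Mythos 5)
Your proposal is correct and follows essentially the same route as the paper's proof: both exploit the time-continuity of the discrete solution and of $(\bm{z}_2,\bm{z}_3,z_4)$ to reduce the jump to the displacement component, identify $\bm{z}_1(t_{n-1}^{+})-\bm{z}_1(t_{n-1})$ as the integral over $I_{n-1}$ of the Gauss--Lobatto interpolation error of $\bm{\mathcal{R}}_h\partial_t\bm{u}$, and then combine a weighted Young/Cauchy--Schwarz splitting (producing the factors $1+\tau_{n-1}$ and $1+\tau_{n-1}^{-1}$) with the interpolation estimate~\eqref{eq:interpolation_error} in the $\bm{U}_h$-norm. The only cosmetic difference is that the paper applies the weighted Cauchy--Schwarz inequality directly to the symmetric form $a_h$, whereas you pass through boundedness, coercivity, and Young's inequality; both yield the stated bound.
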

    
    \begin{proof}
        Note that $ (\bm{u}_{\tau, h}, \bm{v}_{\tau, h}, \bm{w}_{\tau, h}, p_{\tau, h}) $ as well as $ (\bm{z}_{2}, \bm{z}_{3}, z_{4}) $ are continuous in time.
        Thus, the equation
        \begin{align*}
            \opnorm{\bm{e}_{\tau, h}^{\bm{x}}(t_{n-1}^{+})}_{e}^{2}
            & = \frac{1}{2} \, a_{h}(\bm{e}_{\tau, h}^{\bm{u}}(t_{n-1}^{+}), \bm{e}_{\tau, h}^{\bm{u}}(t_{n-1}^{+}))
            + \frac{1}{2} \norm{\bm{M}_{\rho}^{1/2}
                \begin{pmatrix}
                    \bm{e}_{\tau, h}^{\bm{v}}(t_{n-1}) \\
                    \bm{e}_{\tau, h}^{\bm{w}}(t_{n-1})
                \end{pmatrix}
            }_{\bm{L}^{2}(\Omega)}^{2}
            + \frac{s_{0}}{2} \norm{e_{\tau, h}^{p}(t_{n-1})}_{L^{2}(\Omega)}^{2}
        \end{align*}
        is satisfied.
        Employing the decomposition
        \begin{align*}
            \bm{e}_{\tau, h}^{\bm{u}}(t_{n-1}^{+})
            = \left[ \bm{z}_{1}(t_{n-1}^{+}) - \bm{z}_{1}(t_{n-1}) \right]
            + \bm{e}_{\tau, h}^{\bm{u}}(t_{n-1}),
        \end{align*}
        we find the estimate
        \begin{align*}
            a_{h}(\bm{e}_{\tau, h}^{\bm{u}}(t_{n-1}^{+}), \bm{e}_{\tau, h}^{\bm{u}}(t_{n-1}^{+}))
            & = a_{h}(\bm{e}_{\tau, h}^{\bm{u}}(t_{n-1}), \bm{e}_{\tau, h}^{\bm{u}}(t_{n-1}))
            + 2 \, a_{h}(\bm{e}_{\tau, h}^{\bm{u}}(t_{n-1}), \bm{z}_{1}(t_{n-1}^{+}) - \bm{z}_{1}(t_{n-1})) \\
            & \quad + a_{h}(\bm{z}_{1}(t_{n-1}^{+}) - \bm{z}_{1}(t_{n-1}), \bm{z}_{1}(t_{n-1}^{+}) - \bm{z}_{1}(t_{n-1})) \\
            & \leq [1 + \tau_{n-1}] \, a_{h}(\bm{e}_{\tau, h}^{\bm{u}}(t_{n-1}), \bm{e}_{\tau, h}^{\bm{u}}(t_{n-1})) \\
            & \quad + [1 + \tau_{n-1}^{-1}] \, a_{h}(\bm{z}_{1}(t_{n-1}^{+}) - \bm{z}_{1}(t_{n-1}), \bm{z}_{1}(t_{n-1}^{+}) - \bm{z}_{1}(t_{n-1})).
        \end{align*}
        Further, as $ a_{h}(\cdot, \cdot) $ is bounded, we get
        \begin{align*}
            & \quad a_{h}(\bm{z}_{1}(t_{n-1}^{+}) - \bm{z}_{1}(t_{n-1}), \bm{z}_{1}(t_{n-1}^{+}) - \bm{z}_{1}(t_{n-1})) \\
            & \leq c \left( \norm{\bm{z}_{1}(t_{n-1}^{+}) - \bm{\mathcal{R}}_{h} \bm{u}(t_{n-1})}_{\bm{U}_{h}}
            + \norm{\bm{\mathcal{R}}_{h} \bm{u}(t_{n-1}) - \bm{z}_{1}(t_{n-1})}_{\bm{U}_{h}} \right)^{2} \\
            & \leq c \norm{\int_{t_{n-2}}^{t_{n-1}} \bm{\mathcal{R}}_{h} \partial_{t} \bm{u} -  \bm{\mathcal{I}}_{\tau}^{\text{GL}} \bm{\mathcal{R}}_{h} \partial_{t} \bm{u} \mathrm{d}s}_{\bm{U}_{h}}^{2}
            \leq c \tau_{n-1} \tau_{n-1}^{2(k+1)} \norm{\partial_{t}^{k+2} \bm{u}}_{L^{2}(I_{n-1}; \bm{U}_{h})}^{2}.
        \end{align*}
        Together we obtain \eqref{eq:dg_estimate_limits}.
    \end{proof}
    
    The following lemma provides a bound on the second term on the right-hand side of~\eqref{eq:dg_estimate_tf1}.
    
    \begin{lemma}
        \label{lem:dg_estimate_tf2}
        Let $ \mathcal{E}_{t}^{n, 1}, \mathcal{E}_{\mathbf{x}}^{n, 1} $ be defined as in Lemma~\ref{lem:dg_estimate_tf1} and
        \begin{align*}
            \mathcal{E}_{t}^{n, 2} & := \norm{\partial_{t}^{k+1} p}_{L^{\infty}(I_{n}; L^{2})}^{2}, &
            \mathcal{E}_{\mathbf{x}}^{n, 2} & := \norm{p}_{L^{\infty}(I_{n}; H^{\ell+1})}^{2}
            + \tau_{n}^{2} \norm{\partial_{t} p}_{L^{\infty}(I_{n}; H^{\ell+1})}^{2}.
        \end{align*}
        Then, there holds
        \begin{align*}
            \opnorm{\bm{e}_{\tau, h}^{\bm{x}}}_{L^{2}(I_{n}; \bm{L}^{2})}^{2}
            & \leq c \tau_{n} \left( \opnorm{\bm{e}_{\tau, h}^{\bm{x}}(t_{n-1}^{+})}_{e}^{2}
            + \tau_{n}^{2(k+1)} \left( \mathcal{E}_{t}^{n, 1}
            + \mathcal{E}_{t}^{n, 2} \right)
            + h^{2(\ell+1)} \left( \mathcal{E}_{\mathbf{x}}^{n, 1}
            + \mathcal{E}_{\mathbf{x}}^{n, 2} \right)
            \right)
        \end{align*}
        for $ n = 1, \ldots, N $.
    \end{lemma}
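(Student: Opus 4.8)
The plan is to trade the Bochner $L^{2}(I_{n})$-norm for a pointwise-in-time energy bound at the cost of one factor $\tau_{n}$. Since every component of $\bm{e}_{\tau,h}^{\bm{x}}$ is, on $I_{n}$, a polynomial of degree $k$ in time, and since $\opnorm{\cdot}$ is equivalent to $\opnorm{\cdot}_{e}$ by the coercivity and boundedness of $a_{h}(\cdot,\cdot)$, one has
\begin{align*}
\opnorm{\bm{e}_{\tau,h}^{\bm{x}}}_{L^{2}(I_{n};\bm{L}^{2})}^{2}
= \int_{I_{n}} \opnorm{\bm{e}_{\tau,h}^{\bm{x}}(t)}^{2}\,\mathrm{d}t
\leq \tau_{n} \max_{t \in \overline{I}_{n}} \opnorm{\bm{e}_{\tau,h}^{\bm{x}}(t)}^{2}
\leq c\,\tau_{n} \max_{t \in \overline{I}_{n}} \opnorm{\bm{e}_{\tau,h}^{\bm{x}}(t)}_{e}^{2}.
\end{align*}
It therefore remains to bound $\max_{t}\opnorm{\bm{e}_{\tau,h}^{\bm{x}}(t)}_{e}^{2}$ by $\opnorm{\bm{e}_{\tau,h}^{\bm{x}}(t_{n-1}^{+})}_{e}^{2}$ plus the asserted consistency quantities.

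To estimate the maximal energy, I would rerun the computation behind Lemma~\ref{lem:dg_estimate_tf1}, using the same test functions~\eqref{eq:prf_dg_estimate_tf1-1} in~\eqref{eq:dg_var_eqn}, but now evaluating the resulting energy balance up to the time $t^{\ast} \in \overline{I}_{n}$ at which the maximum is attained rather than only at $t_{n}$. The symmetric terms (those built from $a_{h}$, $\bm{M}_{\rho}$ and $s_{0}$) again telescope into $\opnorm{\bm{e}_{\tau,h}^{\bm{x}}(t^{\ast})}_{e}^{2} - \opnorm{\bm{e}_{\tau,h}^{\bm{x}}(t_{n-1}^{+})}_{e}^{2}$, the dissipative $\bm{K}^{-1}$-contribution keeps its favourable sign, and the residual terms $\bm{T}_{\mathrm{I}}^{n},\dots,T_{\mathrm{V}}^{n}$ together with $\bm{\eta}^{\bm{x}}$ are controlled exactly as in Lemma~\ref{lem:dg_estimates1}--Lemma~\ref{lem:dg_estimates3}, which furnishes the $\tau_{n}^{2(k+1)}\mathcal{E}_{t}^{n,1}+h^{2(\ell+1)}\mathcal{E}_{\mathbf{x}}^{n,1}$ part after Young's inequality.

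The new feature, responsible for $\mathcal{E}_{t}^{n,2}$ and $\mathcal{E}_{\mathbf{x}}^{n,2}$, is the displacement--pressure coupling. Integrating the $\alpha$-coupling by parts in time, exactly as in the proof of Lemma~\ref{lem:dg_estimate_tf1}, produces the pointwise boundary contribution $\scalar{\alpha\eta^{p}(t^{\ast})}{\mathrm{div}(\bm{e}_{\tau,h}^{\bm{u}}(t^{\ast}))}$; whereas in Lemma~\ref{lem:dg_estimate_tf1} the analogous term at $t_{n}$ was retained as $\delta_{n}$, here it must be bounded. Estimating it by $c\norm{\eta^{p}}_{L^{\infty}(I_{n};L^{2})}\opnorm{\bm{e}_{\tau,h}^{\bm{x}}(t^{\ast})}_{e}$ and invoking Lemma~\ref{lem:dg_projection_errors_z} with $s=\infty$ yields precisely $\tau_{n}^{2(k+1)}\mathcal{E}_{t}^{n,2}+h^{2(\ell+1)}\mathcal{E}_{\mathbf{x}}^{n,2}$, while the accompanying factor $\opnorm{\bm{e}_{\tau,h}^{\bm{x}}(t^{\ast})}_{e}$ is split off by Young's inequality and absorbed into the maximum. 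Gathering all contributions gives a bound of the form
\begin{align*}
\max_{t \in \overline{I}_{n}} \opnorm{\bm{e}_{\tau,h}^{\bm{x}}(t)}_{e}^{2}
&\leq c\,\opnorm{\bm{e}_{\tau,h}^{\bm{x}}(t_{n-1}^{+})}_{e}^{2}
+ c\,\tau_{n}\max_{t \in \overline{I}_{n}} \opnorm{\bm{e}_{\tau,h}^{\bm{x}}(t)}_{e}^{2} \\
&\quad + c\,\tau_{n}^{2(k+1)}\big(\mathcal{E}_{t}^{n,1}+\mathcal{E}_{t}^{n,2}\big)
+ c\,h^{2(\ell+1)}\big(\mathcal{E}_{\mathbf{x}}^{n,1}+\mathcal{E}_{\mathbf{x}}^{n,2}\big),
\end{align*}
where the term $c\,\tau_{n}\max_{t}\opnorm{\bm{e}_{\tau,h}^{\bm{x}}(t)}_{e}^{2}$ stems from the generic energy factor $c\opnorm{\bm{e}_{\tau,h}^{\bm{x}}}_{L^{2}(I_{n};\bm{L}^{2})}^{2}$ via the reduction above. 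Absorbing it into the left-hand side for $\tau_{n}$ small enough and inserting the resulting bound into the reduction of the first step yields the claim.

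The step I expect to be most delicate is precisely this self-referential closure: the variational equations~\eqref{eq:dg_var_eqn} hold over the whole slab $I_{n}$ and against test functions of degree $k-1$ only, so they naturally deliver the energy at the node $t_{n}$, and passing to the maximum over $\overline{I}_{n}$ reintroduces $\opnorm{\bm{e}_{\tau,h}^{\bm{x}}}_{L^{2}(I_{n};\bm{L}^{2})}^{2}$ on the right. One has to verify that this reappearing term always carries the factor $\tau_{n}$, so that it can be absorbed under a mild step-size restriction, and that the constant produced in doing so---as well as the one hidden in the pointwise bound of the coupling term through the $L^{\infty}$-in-time projection error---remains independent of $n$, of $h$ and $\tau_{n}$, and of the physical parameters.
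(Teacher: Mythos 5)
Your overall reduction (trading the $L^{2}(I_{n})$-norm for $\tau_{n}$ times a maximum of the energy) is harmless, but the central step --- bounding $\max_{t\in\overline{I}_{n}}\opnorm{\bm{e}_{\tau,h}^{\bm{x}}(t)}_{e}^{2}$ by ``rerunning'' the proof of Lemma~\ref{lem:dg_estimate_tf1} up to the maximizing time $t^{\ast}$ --- cannot be carried out, and this is a genuine gap, not merely the delicate point you flag. The error equations \eqref{eq:dg_var_eqn} hold only as integrals over the whole slab $I_{n}$ against test functions in $\mathbb{P}_{k-1}(I_{n};\bm{X}_{h})$; a test function cut off at $t^{\ast}$ is not admissible, so there is no variational identity on $[t_{n-1},t^{\ast}]$. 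Moreover, every telescoping identity in the proof of Lemma~\ref{lem:dg_estimate_tf1} rests on full-interval structure: the projection $\Pi_{\tau}^{k-1}$ is defined in \eqref{eq:local_L2_projection} through integrals over all of $I_{n}$, and identities such as $\int_{I_{n}} \scalar{s_{0}\partial_{t} e_{\tau,h}^{p}}{\Pi_{\tau}^{k-1} e_{\tau,h}^{p}} \mathrm{d}t = \tfrac{s_{0}}{2}\norm{e_{\tau,h}^{p}(t_{n})}_{L^{2}(\Omega)}^{2} - \tfrac{s_{0}}{2}\norm{e_{\tau,h}^{p}(t_{n-1}^{+})}_{L^{2}(\Omega)}^{2}$ use Lemma~\ref{lem:local_L2_projection} and the exactness of the $k$-point Gauss formula for degree $2k-1$; neither survives truncation of the time integral at an interior point. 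Hence the energy argument delivers the energy at the right node $t_{n}$ only, never at an arbitrary $t^{\ast}\in I_{n}$, and your closing absorption step has nothing to absorb into, because the inequality preceding it is never established. (The inverse estimate of Lemma~\ref{lem:Linfty_L2_inverse_inequality} goes in the wrong direction and cannot substitute for it.)

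The paper closes this gap by a different mechanism, which is the real content of the lemma. It expands $\bm{e}_{\tau,h}^{\bm{x}}$ in the Lagrange basis $\{L_{n,j}^{\text{G},0}\}_{j=0}^{k}$ at the points $t_{n-1}, t_{n,1}^{\text{G}},\ldots,t_{n,k}^{\text{G}}$, and tests \eqref{eq:dg_var_eqn} not with projections of the error (as in Lemma~\ref{lem:dg_estimate_tf1}) but with the weighted combinations $\sum_{i}(\hat{t}_{i}^{\text{G}})^{-1/2}\bm{\tilde{e}}_{n,i}^{\bm{x}}L_{n,i}^{\text{G}}$. The time-derivative terms then become a quadratic form in the nodal errors governed by the matrix $\pmb{\widetilde{M}}=\bm{D}^{-1/2}\bm{M}\bm{D}^{1/2}$, whose positive definiteness (from \cite{Karakashian1999Aspa}) gives $\tau_{n}\Gamma_{1} \geq c_{1}\opnorm{\bm{e}_{\tau,h}^{\bm{x}}}_{L^{2}(I_{n};\bm{L}^{2})}^{2} - c_{2}\tau_{n}\opnorm{\bm{e}_{\tau,h}^{\bm{x}}(t_{n-1}^{+})}^{2}$ once one invokes the equivalence of $\opnorm{\cdot}_{L^{2}(I_{n};\bm{L}^{2})}^{2}$ with $\tau_{n}$ times the sum of squared nodal values. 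The $L^{2}$-in-time norm is thus controlled directly; no pointwise maximum ever appears. Your instinct about the origin of $\mathcal{E}_{t}^{n,2}$ and $\mathcal{E}_{\mathbf{x}}^{n,2}$ is partially correct --- they do come from the $\alpha\eta^{p}$ coupling estimated through $L^{\infty}$-in-time projection errors (Lemma~\ref{lem:dg_projection_errors_z} with $s=\infty$) --- but in the paper this is done directly on $\tau_{n}\int_{I_{n}}\scalar{\alpha\eta^{p}}{\mathrm{div}(\bm{\varphi}_{\tau,h}^{\bm{u}})}\mathrm{d}t$, with the resulting $c\varepsilon\norm{\bm{e}_{\tau,h}^{\bm{u}}}_{L^{2}(I_{n};\bm{U}_{h})}^{2}$ absorbed into the positive $L^{2}$-in-time term on the left, not via an integration by parts in time producing a boundary term at $t^{\ast}$.
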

    
    \begin{proof}
        Using the coefficients $ \bm{e}_{n, j}^{\bm{x}} := (\bm{e}_{n, j}^{\bm{u}}, \bm{e}_{n, j}^{\bm{v}}, \bm{e}_{n, j}^{\bm{w}}, e_{n, j}^{p}) \in \bm{X}_{h} $ for $ j = 0, \ldots, k $, we can represent $ \bm{e}_{\tau, h}^{\bm{x}} $ as
        \begin{align}
            \label{eq:prf_dg_estimate_tf2-1}
            \bm{e}_{\tau, h}^{\bm{x}}(t) & = \sum_{j=0}^{k} \bm{e}_{n, j}^{\bm{x}} L_{n, j}^{\text{G}, 0}(t).
        \end{align}
        Further, we define the test functions
        \begin{align}
            \label{eq:prf_dg_estimate_tf2-2}
            \begin{aligned}
                \bm{\varphi}_{\tau, h}^{\bm{u}}(t) & = \sum_{i=1}^{k} (\hat{t}_{i}^{\text{G}})^{-1/2} \bm{\tilde{e}}_{n, i}^{\bm{v}} L_{n, i}^{\text{G}}(t), & \qquad
                \bm{\varphi}_{\tau, h}^{\bm{v}}(t) & = \sum_{i=1}^{k} (\hat{t}_{i}^{\text{G}})^{-1/2} \bm{\mathcal{A}}_{h} \bm{\tilde{e}}_{n, i}^{\bm{u}} L_{n, i}^{\text{G}}(t), \\
                \bm{\varphi}_{\tau, h}^{\bm{w}}(t) & = \sum_{i=1}^{k} (\hat{t}_{i}^{\text{G}})^{-1/2} \bm{\tilde{e}}_{n, i}^{\bm{w}} L_{n, i}^{\text{G}}(t), &
                \varphi_{\tau, h}^{p}(t) & = \sum_{i=1}^{k} (\hat{t}_{i}^{\text{G}})^{-1/2} \tilde{e}_{n, i}^{p} L_{n, i}^{\text{G}}(t),
            \end{aligned}
        \end{align}
        for \eqref{eq:dg_var_eqn}, where $ \bm{\tilde{e}}_{n, i}^{\bm{x}} := (\bm{\tilde{e}}_{n, i}^{\bm{u}}, \bm{\tilde{e}}_{n, i}^{\bm{v}}, \bm{\tilde{e}}_{n, i}^{\bm{w}}, \tilde{e}_{n, i}^{p}) := (\hat{t}_{i}^{\text{G}})^{-1/2} \bm{e}_{n, i}^{\bm{x}} $.
        Then, exploiting \eqref{eq:quadrature_G}, from \eqref{eq:prf_dg_estimate_tf2-1}--\eqref{eq:prf_dg_estimate_tf2-2} we find
        \begin{align*}
            \int_{I_{n}} \scalar{\bm{\mathcal{A}}_{h} \bm{e}_{\tau, h}^{\bm{u}}}{\bm{\varphi}_{\tau, h}^{\bm{u}}}
            - \scalar{\bm{e}_{\tau, h}^{\bm{v}}}{\bm{\varphi}_{\tau, h}^{\bm{v}}} \mathrm{d}t
            = \tau_{n} \sum_{j=1}^{k} \hat{\omega}_{j}^{\text{G}} (\hat{t}_{j}^{\text{G}})^{-1} \left[ \scalar{\bm{\mathcal{A}}_{h} \bm{e}_{n, j}^{\bm{u}}}{\bm{e}_{n, j}^{\bm{v}}}
            - \scalar{\bm{e}_{n, j}^{\bm{v}}}{\bm{\mathcal{A}}_{h} \bm{e}_{n, j}^{\bm{u}}} \right]
            = 0.
        \end{align*}
        Moreover, with \eqref{eq:prf_dg_estimate_tf1-2}, we get
        \begin{align*}
            \int_{I_{n}} \scalar{\alpha \mathrm{div}(\partial_{t} \bm{e}_{\tau, h}^{\bm{u}})}{\varphi_{\tau, h}^{p}}
            - \scalar{\alpha e_{\tau, h}^{p}}{\mathrm{div}(\bm{\varphi}_{\tau, h}^{\bm{u}})} \mathrm{d}t
            & = 0, \\
            \int_{I_{n}} \scalar{\mathrm{div}(\bm{e}_{\tau, h}^{\bm{w}})}{\varphi_{\tau, h}^{p}}
            - \scalar{e_{\tau, h}^{p}}{\mathrm{div}(\bm{\varphi}_{\tau, h}^{\bm{w}})} \mathrm{d}t
            & = 0.
        \end{align*}
        Additionally, the final estimation will take advantage of
        \begin{align*}
            \int_{I_{n}} \scalar{\bm{K}^{-1} \bm{e}_{\tau, h}^{\bm{w}}}{\bm{\varphi}_{\tau, h}^{\bm{w}}} \mathrm{d}t
            & = \tau_{n} \sum_{j=1}^{k} \hat{\omega}_{j}^{\text{G}} (\hat{t}_{j}^{\text{G}})^{-1} \scalar{\bm{K}^{-1} \bm{e}_{n, j}^{\bm{w}}}{\bm{e}_{n, j}^{\bm{w}}}
            \geq 0.
        \end{align*}
        As in \cite{Karakashian2005Conv, Bause2024Conv}, we define $ \bm{M} \in \mathbb{R}^{k \times k} $ and $ \bm{m}_{0} \in \mathbb{R}^{k} $ via
        \begin{align*}
            m_{ij} := \int_{I_{n}} \partial_{t} L_{n, j}^{\text{G}, 0} L_{n, i}^{\text{G}} \mathrm{d}t
            \qquad \text{and} \qquad
            m_{i0} := \int_{I_{n}} \partial_{t} L_{n, 0}^{\text{G}, 0} L_{n, i}^{\text{G}} \mathrm{d}t.
        \end{align*}
        According to \cite{Karakashian1999Aspa}, the matrix $ \pmb{\widetilde{M}} = (\widetilde{m}_{ij})_{ij} := \bm{D}^{-1/2} \bm{M} \bm{D}^{1/2} $ where $ \bm{D} := \mathrm{diag}(\hat{t}_{1}^{\text{G}}, \ldots, \hat{t}_{k}^{\text{G}}) $ is positive definite.
        Next, we have
        \begin{align*}
            \begin{aligned}
                \Gamma_{1} & := \int_{I_{n}} \scalar{\bm{M}_{\rho}
                    \begin{pmatrix}
                        \partial_{t} \bm{e}_{\tau, h}^{\bm{v}} \\
                        \partial_{t} \bm{e}_{\tau, h}^{\bm{w}}
                    \end{pmatrix}
                }{
                    \begin{pmatrix}
                        \bm{\varphi}_{\tau, h}^{\bm{u}} \\
                        \bm{\varphi}_{\tau, h}^{\bm{w}}
                    \end{pmatrix}
                }
                + \scalar{\partial_{t} \bm{e}_{\tau, h}^{\bm{u}}}{\bm{\varphi}_{\tau, h}^{\bm{v}}}
                + \scalar{s_{0} \partial_{t} e_{\tau, h}^{p}}{\varphi_{\tau, h}^{p}} \mathrm{d}t \\
                & = \sum_{i, j=1}^{k} \widetilde{m}_{ij} \left[
                \scalar{\bm{M}_{\rho}
                    \begin{pmatrix}
                        \bm{\tilde{e}}_{n, j}^{\bm{v}} \\
                        \bm{\tilde{e}}_{n, j}^{\bm{w}}
                    \end{pmatrix}
                }{
                    \begin{pmatrix}
                        \bm{\tilde{e}}_{n, i}^{\bm{v}} \\
                        \bm{\tilde{e}}_{n, i}^{\bm{w}}
                    \end{pmatrix}
                }
                + \scalar{\bm{\tilde{e}}_{n, j}^{\bm{u}}}{\bm{\mathcal{A}}_{h} \bm{\tilde{e}}_{n, i}^{\bm{u}}}
                + \scalar{s_{0} \tilde{e}_{n, j}^{p}}{\tilde{e}_{n, i}^{p}} \right]
                \\
                & \quad + \sum_{i=1}^{k} (\hat{t}_{i}^{\text{G}})^{-1/2} m_{i0} \left[
                \scalar{\bm{M}_{\rho}
                    \begin{pmatrix}
                        \bm{e}_{\tau, h}^{\bm{v}}(t_{n-1}^{+}) \\
                        \bm{e}_{\tau, h}^{\bm{w}}(t_{n-1}^{+})
                    \end{pmatrix}
                }{
                    \begin{pmatrix}
                        \bm{\tilde{e}}_{n, i}^{\bm{v}} \\
                        \bm{\tilde{e}}_{n, i}^{\bm{w}}
                    \end{pmatrix}
                }
                + \scalar{\bm{e}_{\tau, h}^{\bm{u}}(t_{n-1}^{+})}{\bm{\mathcal{A}}_{h} \bm{\tilde{e}}_{n, i}^{\bm{u}}}
                + \scalar{s_{0} e_{\tau, h}^{p}(t_{n-1}^{+})}{\tilde{e}_{n, i}^{p}} \right],
            \end{aligned}
        \end{align*}
        which can be estimated by
        \begin{align*}
            \Gamma_{1}
            & \geq c_{1} \sum_{i=1}^{k} \opnorm{\bm{\tilde{e}}_{n, i}^{\bm{x}}}^{2}
            - c_{2} \opnorm{\bm{e}_{\tau, h}^{\bm{x}}(t_{n-1}^{+})} \left( \sum_{i=1}^{k} \opnorm{\bm{\tilde{e}}_{n, i}^{\bm{x}}}^{2} \right)^{1/2}
            \geq c_{3} \sum_{i=1}^{k} \opnorm{\bm{\tilde{e}}_{n, i}^{\bm{x}}}^{2}
            - c_{4} \opnorm{\bm{e}_{\tau, h}^{\bm{x}}(t_{n-1}^{+})}^{2}.
        \end{align*}
        Utilizing the equivalence
        \begin{align*}
            c_{1} \tau_{n} \sum_{i=0}^{k} \opnorm{\bm{e}_{n, i}^{\bm{x}}}^{2}
            \leq \opnorm{\bm{e}_{\tau, h}^{\bm{x}}}_{L^{2}(I_{n}; \bm{L}^{2})}^{2}
            \leq c_{2} \tau_{n} \sum_{i=0}^{k} \opnorm{\bm{e}_{n, i}^{\bm{x}}}^{2},
        \end{align*}
        cf.~\cite{Karakashian2005Conv}, we infer
        \begin{align*}
            \tau_{n} \Gamma_{1}
            \geq c_{1} \opnorm{\bm{e}_{\tau, h}^{\bm{x}}}_{L^{2}(I_{n}; \bm{L}^{2})}^{2} - c_{2} \tau_{n} \opnorm{\bm{e}_{\tau, h}^{\bm{x}}(t_{n-1}^{+})}^{2}.
        \end{align*}
        The last term on the right-hand side of \eqref{eq:dg_var_eqn-1} which was not estimated in Lemma~\ref{lem:dg_estimates1} after multiplication with $ \tau_{n} $ can be bounded as follows (cf.~\cite[Lem.~4.5]{Bause2024Conv})
        \begin{align*}
            \tau_{n} \int_{I_{n}} \scalar{\alpha \eta^{p}}{\mathrm{div}(\bm{\varphi}_{\tau, h}^{\bm{u}})} \mathrm{d}t
            & \leq c \varepsilon \norm{\bm{e}_{\tau, h}^{\bm{u}}}_{L^{2}(I_{n}; \bm{U}_{h})}^{2}
            + c \tau_{n} \tau_{n}^{2(k+1)} \norm{\partial_{t}^{k+1} p}_{L^{\infty}(I_{n}; L^{2})}^{2} \\
            & \quad + c \tau_{n} h^{2(\ell+1)} \left(
            \norm{p}_{L^{\infty}(I_{n}; H^{\ell+1})}^{2}
            + \tau_{n}^{2} \norm{\partial_{t} p}_{L^{\infty}(I_{n}; H^{\ell+1})}^{2} \right),
        \end{align*}
        where we have used Lemma~\ref{lem:dg_projection_errors_z}.
        For the treatment of the other terms, we refer to Lemma~\ref{lem:dg_estimate_tf1}.
        The final result follows from application of Young's inequality.
    \end{proof}
    
    Similarly to~\cite[Lem.~4.7]{Bause2024Conv}, the next lemma can be shown.
    
    \begin{lemma}
        \label{lem:dg_delta_estimate}
        For $ n = 2, \ldots, N $, we have
        \begin{align*}
            \delta_{n} - \delta_{n-1}^{+}
            & \leq \delta_{n} - \delta_{n-1}
            + c \tau_{n-1} \left( h^{2(\ell+1)} \norm{p}_{L^{\infty}(I_{n-1}; H^{\ell+1})}^{2}
            + \tau_{n-1}^{2(k+1)} \norm{\partial_{t}^{k+2} \bm{u}}_{L^{\infty}(I_{n-1}; \bm{U}_{h})}^{2} \right)
        \end{align*}
        and
        \begin{align*}
            \abs{\delta_{1} - \delta_{0}^{+}}
            \leq c h^{2(\ell+1)} \left( \norm{p_{0}}_{H^{\ell+1}(\Omega)}^{2}
            + \norm{p(t_{1})}_{H^{\ell+1}(\Omega)}^{2}
            + \norm{\bm{u}_{0}}_{\bm{H}^{\ell+2}(\Omega)}^{2} \right)
            + \varepsilon \opnorm{\bm{e}_{\tau, h}^{\bm{x}}(t_{1})}_{e}^{2}.
        \end{align*}
    \end{lemma}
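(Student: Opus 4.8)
The plan is to exploit the different continuity behaviour of the two factors defining $\delta_n$. First I would observe that $z_4 = \mathcal{I}_{\tau}^{\mathrm{GL}}(\mathcal{P}_h p)$ lies in $\mathbb{X}_{\tau}^{k}(P_h)$ and is therefore globally continuous in time, so that $\eta^p = p - z_4$ is continuous and $\eta^p(t_{n-1}^+) = \eta^p(t_{n-1})$; moreover $\eta^p(t_{n-1}) = (\mathrm{Id} - \mathcal{P}_h)p(t_{n-1})$, since Gauss--Lobatto interpolation reproduces $\mathcal{P}_h p$ at the interval endpoints. In contrast, $\bm{z}_1$ is only piecewise continuous while $\bm{u}_{\tau,h}$ is globally continuous, so the jump of $\bm{e}_{\tau,h}^{\bm{u}} = \bm{z}_1 - \bm{u}_{\tau,h}$ across $t_{n-1}$ coincides with the jump of $\bm{z}_1$. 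Writing $\delta_n - \delta_{n-1}^+ = (\delta_n - \delta_{n-1}) + (\delta_{n-1} - \delta_{n-1}^+)$ and using continuity of $\eta^p$, the cross term becomes
\[
\delta_{n-1} - \delta_{n-1}^+ = -\scalar{\alpha\eta^p(t_{n-1})}{\mathrm{div}(\bm{z}_1(t_{n-1}^+) - \bm{z}_1(t_{n-1}))}.
\]

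For the jump of $\bm{z}_1$ I would reuse the computation already carried out in the proof of Lemma~\ref{lem:dg_estimate_limits}: evaluating the definition~\eqref{eq:dg_definition_z-2} at the relevant endpoints and using that $\bm{\mathcal{R}}_h$ commutes with $\partial_t$ yields
\[
\bm{z}_1(t_{n-1}^+) - \bm{z}_1(t_{n-1}) = \int_{t_{n-2}}^{t_{n-1}} \bm{\mathcal{R}}_h\partial_t\bm{u} - \bm{\mathcal{I}}_{\tau}^{\mathrm{GL}}\bm{\mathcal{R}}_h\partial_t\bm{u}\,\mathrm{d}s,
\]
whose $\bm{U}_h$-norm is bounded by $c\tau_{n-1}^{k+3/2}\norm{\partial_t^{k+2}\bm{u}}_{L^2(I_{n-1};\bm{U}_h)}$, hence by $c\tau_{n-1}^{k+2}\norm{\partial_t^{k+2}\bm{u}}_{L^{\infty}(I_{n-1};\bm{U}_h)}$ after an $L^2$--$L^{\infty}$ estimate on $I_{n-1}$. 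Combined with $\norm{\eta^p(t_{n-1})}_{L^2(\Omega)} \le c h^{\ell+1}\norm{p}_{L^{\infty}(I_{n-1};H^{\ell+1})}$ from Lemma~\ref{lem:dg_projection_errors}, a Cauchy--Schwarz step (controlling $\norm{\mathrm{div}(\cdot)}_{L^2}$ by $\norm{\cdot}_{\bm{U}_h}$) followed by Young's inequality with weight $\gamma = \tau_{n-1}^{-1}$ splits the product so that both resulting contributions carry exactly the prefactor $c\tau_{n-1}$, giving the first asserted bound.

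For $n=1$ I would treat $\delta_1$ and $\delta_0^+$ separately. At $t=0$ the initial prescriptions in Definition~\ref{def:dg_definition_z} give $\eta^p(0) = (\mathrm{Id}-\mathcal{P}_h)p_0$ and $\bm{e}_{\tau,h}^{\bm{u}}(0^+) = \bm{\mathcal{R}}_h\bm{u}_0 - \bm{u}_{0,h}$, so $\abs{\delta_0^+}$ is controlled purely by data through the pressure projection error and assumption~\eqref{eq:dg_assumptions}, yielding the $h^{2(\ell+1)}(\norm{p_0}_{H^{\ell+1}(\Omega)}^2 + \norm{\bm{u}_0}_{\bm{H}^{\ell+2}(\Omega)}^2)$ contribution after Young. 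For $\delta_1 = \scalar{\alpha\eta^p(t_1)}{\mathrm{div}(\bm{e}_{\tau,h}^{\bm{u}}(t_1))}$ the discrete factor cannot be bounded by data; instead I would estimate $\norm{\eta^p(t_1)}_{L^2(\Omega)} \le c h^{\ell+1}\norm{p(t_1)}_{H^{\ell+1}(\Omega)}$ and absorb $\norm{\bm{e}_{\tau,h}^{\bm{u}}(t_1)}_{\bm{U}_h}^2$ into $\varepsilon\opnorm{\bm{e}_{\tau,h}^{\bm{x}}(t_1)}_e^2$ via coercivity of $a_h(\cdot,\cdot)$, producing precisely the $\varepsilon$-term in the statement.

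The main obstacle is organizational rather than analytical: correctly diagnosing which components of $\bm{e}_{\tau,h}^{\bm{x}}$ jump at the nodes (only $\bm{z}_1$, and hence $\bm{e}_{\tau,h}^{\bm{u}}$, does, while $\bm{z}_2,\bm{z}_3,z_4$ and the discrete solution are continuous), and then calibrating the Young weight so the cross term lands with the single factor $\tau_{n-1}$ required by the discrete Gr\"onwall argument to follow. One must also track that the $L^{\infty}$-in-time norms on the right-hand side arise from converting the $L^2$ jump estimate at the price of one extra $\tau_{n-1}^{1/2}$, which is exactly what makes the powers $\tau_{n-1}^{2k+3} = \tau_{n-1}\cdot\tau_{n-1}^{2(k+1)}$ match.
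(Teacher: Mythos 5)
Your proof is correct and follows essentially the route the paper intends: the paper itself omits the proof of Lemma~\ref{lem:dg_delta_estimate} (deferring to the analogous lemma in the cited reference), but its proof of Lemma~\ref{lem:dg_estimate_limits} already contains exactly the jump identity for $\bm{z}_1$ that you reuse, and it is indeed only $\bm{z}_1$ (hence $\bm{e}_{\tau,h}^{\bm{u}}$) that jumps at the nodes while $\eta^p$ and the discrete solution are continuous. Your remaining steps---the endpoint identities $\eta^p(t_{n-1}^+)=\eta^p(t_{n-1})=(\mathrm{Id}-\mathcal{P}_h)p(t_{n-1})$ and $\bm{e}_{\tau,h}^{\bm{u}}(0^+)=\bm{\mathcal{R}}_h\bm{u}_0-\bm{u}_{0,h}$, the projection estimates of Lemma~\ref{lem:dg_projection_errors}, assumption~\eqref{eq:dg_assumptions}, coercivity of $a_h(\cdot,\cdot)$ for the $\varepsilon$-term, and Young's inequality with weight $\tau_{n-1}$ calibrated so that $\tau_{n-1}^{-1}\tau_{n-1}^{2(k+2)}=\tau_{n-1}\cdot\tau_{n-1}^{2(k+1)}$---are precisely what is needed to produce both stated bounds.
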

    
    Combining the estimates in this section and using the discrete Gronwall lemma, we arrive at our main result.
    
    \begin{theorem}
        \label{thm:dg_discretization_error}
        Let $ \bm{x} := (\bm{u}, \bm{v}, \bm{w}, p) $, $ \bm{x}_{\tau, h} := (\bm{u}_{\tau, h}, \bm{v}_{\tau, h}, \bm{w}_{\tau, h}, p_{\tau, h}) $ and $ \tau := \max_{n=1, \ldots, N} \tau_{n} $.
        There holds
        \begin{align*}
            \opnorm{\bm{x}(t) - \bm{x}_{\tau, h}(t)}
            & \leq c \left( \tau^{k+1} + h^{\ell+1} \right) \quad \forall t \in [0, T].
        \end{align*}
    \end{theorem}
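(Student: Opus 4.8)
The plan is to estimate the two contributions in the splitting~\eqref{eq:dg_eta_e_splitting} separately, exploiting that for every $t$ we have $\opnorm{\bm{x}(t) - \bm{x}_{\tau, h}(t)} \le \opnorm{\bm{\eta}^{\bm{x}}(t)} + \opnorm{\bm{e}_{\tau, h}^{\bm{x}}(t)}$ and that $\opnorm{\cdot}_{e}$ and $\opnorm{\cdot}$ are equivalent. The projection--interpolation part is already under control: taking $s = \infty$ in Lemma~\ref{lem:dg_projection_errors_z} and collecting the constants $\mathcal{C}_{t, \infty}^{n, j}$, $\mathcal{C}_{\mathbf{x}, \infty}^{n, j}$ over the four components yields $\opnorm{\bm{\eta}^{\bm{x}}(t)} \le c(\tau^{k+1} + h^{\ell+1})$ directly. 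Hence everything reduces to bounding the discrete error $\bm{e}_{\tau, h}^{\bm{x}}$ in the energy norm, uniformly in $t$.

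First I would establish the bound at the temporal nodes $t_n$. Inserting the $L^{2}(I_n)$ estimate of Lemma~\ref{lem:dg_estimate_tf2} into the term $c\opnorm{\bm{e}_{\tau, h}^{\bm{x}}}_{L^{2}(I_{n}; \bm{L}^{2})}^{2}$ of the recursion~\eqref{eq:dg_estimate_tf1}, and replacing $\opnorm{\bm{e}_{\tau, h}^{\bm{x}}(t_{n-1}^{+})}_{e}^{2}$ through Lemma~\ref{lem:dg_estimate_limits}, produces an inequality of the form
\begin{align*}
    \opnorm{\bm{e}_{\tau, h}^{\bm{x}}(t_{n})}_{e}^{2}
    & \le (1 + c\tau_{n-1}) \opnorm{\bm{e}_{\tau, h}^{\bm{x}}(t_{n-1})}_{e}^{2}
    + (\delta_{n} - \delta_{n-1}^{+}) \\
    & \quad + c\tau_{n}^{2(k+1)} (\mathcal{E}_{t}^{n, 1} + \mathcal{E}_{t}^{n, 2})
    + c h^{2(\ell+1)} (\mathcal{E}_{\mathbf{x}}^{n, 1} + \mathcal{E}_{\mathbf{x}}^{n, 2}),
\end{align*}
with the data terms as defined in Lemmas~\ref{lem:dg_estimate_tf1} and~\ref{lem:dg_estimate_tf2}. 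The coupling contributions $\delta_{n} - \delta_{n-1}^{+}$ are then rewritten via Lemma~\ref{lem:dg_delta_estimate} as $\delta_{n} - \delta_{n-1}$ plus remainders of order $\tau_{n-1}(h^{2(\ell+1)} + \tau_{n-1}^{2(k+1)})$, so that the $\delta_{n} - \delta_{n-1}$ piece telescopes upon summation.

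I would then sum this recursion from $n = 1$ to an arbitrary index $m$. The telescoped coupling term leaves $\delta_{m} - \delta_{0}$; the residual $\delta_{m} = \scalar{\alpha\eta^{p}(t_{m})}{\mathrm{div}(\bm{e}_{\tau, h}^{\bm{u}}(t_{m}))}$ is split by Young's inequality into a small multiple $\varepsilon\opnorm{\bm{e}_{\tau, h}^{\bm{x}}(t_{m})}_{e}^{2}$, absorbed on the left, and a projection-error remainder of order $h^{2(\ell+1)}$ controlled through Lemma~\ref{lem:dg_projection_errors_z}, while the base term $\delta_{1} - \delta_{0}^{+}$ is treated by the second estimate of Lemma~\ref{lem:dg_delta_estimate}. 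The initial energy $\opnorm{\bm{e}_{\tau, h}^{\bm{x}}(0)}_{e}^{2}$ is of order $h^{2(\ell+1)}$ by the assumptions~\eqref{eq:dg_assumptions} together with the definition of $\bm{z}$ at $t = 0$. Applying the discrete Gronwall lemma to absorb the accumulated factors $(1 + c\tau_{n-1})$, whose product is bounded by $e^{cT}$, then yields $\opnorm{\bm{e}_{\tau, h}^{\bm{x}}(t_{n})}_{e}^{2} \le c(\tau^{2(k+1)} + h^{2(\ell+1)})$ at every node.

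Finally I would upgrade the nodal bound to one valid for all $t \in [0, T]$. Since each component of $\bm{e}_{\tau, h}^{\bm{x}}$ restricted to $I_{n}$ is a polynomial in time, the $L^{\infty}$--$L^{2}$ inverse estimate of Lemma~\ref{lem:Linfty_L2_inverse_inequality} gives $\max_{t \in I_{n}} \opnorm{\bm{e}_{\tau, h}^{\bm{x}}(t)}^{2} \le c\tau_{n}^{-1} \opnorm{\bm{e}_{\tau, h}^{\bm{x}}}_{L^{2}(I_{n}; \bm{L}^{2})}^{2}$, and Lemma~\ref{lem:dg_estimate_tf2} shows the right-hand side is of order $\tau_{n}^{2(k+1)} + h^{2(\ell+1)}$ once the established nodal bound for $\opnorm{\bm{e}_{\tau, h}^{\bm{x}}(t_{n-1}^{+})}_{e}^{2}$ is inserted. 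Combining this with the triangle inequality and the $\bm{\eta}^{\bm{x}}$ estimate yields the claim. I expect the main obstacle to be the bookkeeping around the coupling terms $\delta_{n}$: one must telescope them correctly, absorb the surviving boundary term $\delta_{m}$ with an $\varepsilon$ small enough to survive the Gronwall accumulation, and verify that the extra $\tau_{n-1}$-weighted remainders from Lemma~\ref{lem:dg_delta_estimate} are consistent with the claimed order rather than degrading it.
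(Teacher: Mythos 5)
Your proposal is correct and follows essentially the same path as the paper's proof: the splitting~\eqref{eq:dg_eta_e_splitting}, the nodal recursion assembled from Lemmas~\ref{lem:dg_estimate_tf1}--\ref{lem:dg_delta_estimate}, telescoping of the $\delta_{n}$ terms with absorption of the surviving boundary term via Young's inequality, the discrete Gronwall lemma, the inverse estimate~\eqref{eq:Linfty_L2_inverse_inequality} to pass from nodal to uniform-in-time control, and finally the triangle inequality together with Lemma~\ref{lem:dg_projection_errors_z}. The one point to watch is that in your displayed recursion the $L^{\infty}$-in-time data terms $\mathcal{E}_{t}^{n, 2}$ and $\mathcal{E}_{\mathbf{x}}^{n, 2}$ must retain the prefactor $\tau_{n}$ they carry in Lemma~\ref{lem:dg_estimate_tf2}; without that weight, summing these terms over the $N \sim \tau^{-1}$ subintervals would degrade the claimed rate (most visibly for the $h^{2(\ell+1)} \mathcal{E}_{\mathbf{x}}^{n, 2}$ contribution), whereas with it all accumulated data terms are bounded by $c\,( \tau^{2(k+1)} + h^{2(\ell+1)} )$ exactly as in the paper.
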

    
    \begin{proof}
        The proof uses arguments from \cite{Karakashian2005Conv, Bause2024Conv}.
        First, using Lemmas~\ref{lem:dg_estimate_tf1}--\ref{lem:dg_delta_estimate}, we obtain
        \begin{align*}
            A_{n}
            & \leq A_{n-1}
            + (\tau_{n-1} + c \tau_{n} + c \tau_{n-1} \, \tau_{n}) \, A_{n-1}
            + \delta_{n} - \delta_{n-1}
            + c \left( \tau^{2(k+1)} + h^{2(\ell+1)} \right)
        \end{align*}
        for $ n = 2, \ldots, N $, where $ A_{n} := \opnorm{\bm{e}_{\tau, h}^{\bm{x}}(t_{n})}_{e}^{2} $.
        Then, we repeatedly employ the estimate for $ A_{m-1}, \, m = n, \ldots, 2 $ to conclude
        \begin{align*}
            A_{n}
            & \leq A_{1}
            + \sum_{m=2}^{n} (\tau_{m-1} + c \tau_{m} + c \tau_{m-1} \, \tau_{m}) \, A_{m-1}
            + \abs{\delta_{n}} + \abs{\delta_{1}}
            + c \left( \tau^{2(k+1)} + h^{2(\ell+1)} \right).
        \end{align*}
        In addition, the combination of Lemma~\ref{lem:dg_estimate_tf1}, Lemma~\ref{lem:dg_estimate_tf2}, Lemma~\ref{lem:dg_delta_estimate} and \eqref{eq:dg_assumptions} gives
        \begin{align*}
            A_{1}
            & \leq c \left( \tau^{2(k+1)} + h^{2(\ell+1)} \right)
            + \varepsilon \, A_{1}.
        \end{align*}
        Hence, we infer for sufficiently small $ \varepsilon > 0 $ that
        \begin{align*}
            A_{1} & \leq c \left( \tau^{2(k+1)} + h^{2(\ell+1)} \right).
        \end{align*}
        Further, we utilize Lemma~\ref{lem:dg_projection_errors} followed by Young's inequality to get
        \begin{align*}
            \abs{\delta_{1}} \leq c h^{2(\ell+1)} + A_{1}
            \hspace{4em} \text{and} \hspace{4em}
            \abs{\delta_{n}} \leq c h^{2(\ell+1)} + \varepsilon \, A_{n}.
        \end{align*}
        Due to the discrete Gronwall lemma, we have
        \begin{align}
            \label{eq:prf_dg_discretization_error1}
            \opnorm{\bm{e}_{\tau, h}^{\bm{x}}(t_{n})}_{e}^{2}
            \leq c \left( \tau^{2(k+1)} + h^{2(\ell+1)} \right),
            \qquad n = 0, \ldots, N.
        \end{align}
        Next, we make use of Lemmas~\ref{lem:dg_estimate_limits}--\ref{lem:dg_estimate_tf2}, \eqref{eq:prf_dg_discretization_error1} and \eqref{eq:dg_assumptions},
        which results in
        \begin{align*}
            \opnorm{\bm{e}_{\tau, h}^{\bm{x}}}_{L^{2}(I_{n}; \bm{L}^{2})}^{2}
            \leq c \tau_{n} \left( \tau^{2(k+1)} + h^{2(\ell+1)} \right)
        \end{align*}
        for $ n = 1, \ldots, N $.
        Then, using the inverse estimate \eqref{eq:Linfty_L2_inverse_inequality}, we obtain
        \begin{align}
            \label{eq:prf_dg_discretization_error2}
            \opnorm{\bm{e}_{\tau, h}^{\bm{x}}(t)}^{2}
            \leq c \left( \tau^{2(k+1)} + h^{2(\ell+1)} \right) \qquad \forall t \in [0, T].
        \end{align}
        Lastly, decomposition \eqref{eq:dg_eta_e_splitting}, Lemma~\ref{lem:dg_projection_errors_z} and \eqref{eq:prf_dg_discretization_error2} complete the proof.
    \end{proof}

    \section{Numerical experiments}
    \label{sec:dg_numerical_experiments}
    In this section, we analyze the numerical convergence for the continuous Galerkin-Petrov method in time proposed in Section~\ref{subsec:dg_discrete_formulation}.
    
    In the following, we study the discretization errors
    \begin{align*}
        \bm{e}^{\bm{u}} & := \bm{u} - \bm{u}_{\tau, h}, &
        \bm{e}^{\bm{v}} & := \bm{v} - \bm{v}_{\tau, h}, &
        \bm{e}^{\bm{w}} & := \bm{w} - \bm{w}_{\tau, h}, &
        e^{p} & := p - p_{\tau, h},
    \end{align*}
    in the $ \norm{\cdot}_{L^{\infty}(I; \bm{L}^{2})} $ norm.
    In order to compute this norm numerically, we approximate it by
    \begin{align*}
        \norm{\bm{f}}_{L^{\infty}(I; \bm{L}^{2})}
        & \approx \max_{t \in I_{\tau}} \left( \int_{\Omega} \, \norm{\bm{f}(t)}^{2} \mathrm{d}\mathbf{x} \right)^{1/2},
    \end{align*}
    where the set of discrete time points is specified by
    \begin{align*}
        I_{\tau} := \{ t_{n, i} \mid t_{n, i} = t_{n-1} + \frac{i}{100} \, \tau_{n}, \ i = 0, \ldots, 100, \, n = 1, \ldots, N \}.
    \end{align*}
    Next, we choose the right-hand sides of problem \eqref{eq:dg_dyn_biot} such that
    \begin{align*}
        \bm{u} & =
        \bm{w} =
        \begin{pmatrix}
            \sin(\pi t) y^{2} (1 - y)^{2} (4 x^{3} - 6 x^{2} + 2 x) \\
            \sin(\pi t) x^{2} (1 - x)^{2} (4 y^{3} - 6 y^{2} + 2 y)
        \end{pmatrix}, \\
        p & =
        (-\rho_{w} \pi \cos(\pi t) + (\rho_{f} \pi^{2} - 1) \sin(\pi t)) \left( x^{2} (1 - x)^{2} y^{2} (1 - y)^{2} - \frac{1}{900} \right),
    \end{align*}
    represents the exact solution on $ \Omega \times I = (0, 1)^{2} \times \left( 0, 1 \right] $.
    Furthermore, we use the densities $ \bar{\rho} = 0.95 $, $ \rho_{f} = 1 $ and $ \rho_{w} = 2 $.
    For the Lam\'{e} parameters given by
    \begin{align*}
        \lambda & := \frac{\nu E}{(1 + \nu) (1 - 2 \nu)}, &
        \mu & := \frac{E}{2 (1 + \nu)},
    \end{align*}
    we set Young's modulus to $ E = 100 $ and Poisson's ratio to $ \nu = 0.35 $.
    Moreover, we choose $ \alpha = 0.9 $, $ s_{0} = 0.01 $ and $ \bm{K} = \bm{I} $.
    Our initial spatial mesh is a uniform mesh with 50 triangles.
    For the first run, we use a time step size of $ \tau_{0} = 1/10 $ and halve the spatial and temporal mesh sizes after each run.
    
    The numerical results presented in Table~\ref{tab:dg_cgp1} and Table~\ref{tab:dg_cgp2} are for solving problem~\eqref{eq:dg_dyn_biot_velo}
    with the cGP(1) and cGP(2) methods, respectively.
    We observe that the experimental orders of convergence coincide with our theoretical result in Theorem~\ref{thm:dg_discretization_error}.
    
    \begin{table}
        \centering
        \begin{tabular}{cccccccccc}
            \hline
            $ \tau $ & $ h $ & $ \norm{\nabla \bm{e}^{\bm{u}}}_{L^{\infty}(\bm{L}^{2})} $ & eoc & $ \norm{\bm{e}^{\bm{v}}}_{L^{\infty}(\bm{L}^{2})} $ & eoc & $ \norm{\bm{e}^{\bm{w}}}_{L^{\infty}(\bm{L}^{2})} $ & eoc & $ \norm{e^{p}}_{L^{\infty}(L^{2})} $ & eoc \\
            \hline
            $ \tau_{0} / 2^{0} $ & $ h_{0} / 2^{0} $ & $ 7.682 \times 10^{-3} $ & -- & $ 2.781 \times 10^{-3} $ & -- & $ 3.803 \times 10^{-3} $ & -- & $ 1.762 \times 10^{-2} $ & -- \\
            $ \tau_{0} / 2^{1} $ & $ h_{0} / 2^{1} $ & $ 2.099 \times 10^{-3} $ & 1.87 & $ 7.373 \times 10^{-4} $ & 1.92 & $ 1.027 \times 10^{-3} $ & 1.89 & $ 5.539 \times 10^{-3} $ & 1.67 \\
            $ \tau_{0} / 2^{2} $ & $ h_{0} / 2^{2} $ & $ 5.191 \times 10^{-4} $ & 2.02 & $ 2.083 \times 10^{-4} $ & 1.82 & $ 2.609 \times 10^{-4} $ & 1.98 & $ 1.251 \times 10^{-3} $ & 2.15 \\
            $ \tau_{0} / 2^{3} $ & $ h_{0} / 2^{3} $ & $ 1.288 \times 10^{-4} $ & 2.01 & $ 5.213 \times 10^{-5} $ & 2.00 & $ 6.445 \times 10^{-5} $ & 2.02 & $ 3.277 \times 10^{-4} $ & 1.93 \\
            $ \tau_{0} / 2^{4} $ & $ h_{0} / 2^{4} $ & $ 3.197 \times 10^{-5} $ & 2.01 & $ 1.325 \times 10^{-5} $ & 1.98 & $ 1.597 \times 10^{-5} $ & 2.01 & $ 8.130 \times 10^{-5} $ & 2.01 \\
            \hline
            \multicolumn{2}{c}{Optimal} &  & 2.00 &  & 2.00 &  & 2.00 &  & 2.00 \\
            \hline
        \end{tabular}
        \caption{Experimental orders of convergence (eoc) for the cGP(1)-method using $ \ell = 1 $.}
        \label{tab:dg_cgp1}
    \end{table}
    
    \begin{table}
        \centering
        \begin{tabular}{cccccccccc}
            \hline
            $ \tau $ & $ h $ & $ \norm{\nabla \bm{e}^{\bm{u}}}_{L^{\infty}(\bm{L}^{2})} $ & eoc & $ \norm{\bm{e}^{\bm{v}}}_{L^{\infty}(\bm{L}^{2})} $ & eoc & $ \norm{\bm{e}^{\bm{w}}}_{L^{\infty}(\bm{L}^{2})} $ & eoc & $ \norm{e^{p}}_{L^{\infty}(L^{2})} $ & eoc \\
            \hline
            $ \tau_{0} / 2^{0} $ & $ h_{0} / 2^{0} $ & $ 9.466 \times 10^{-4} $ & -- & $ 3.656 \times 10^{-4} $ & -- & $ 1.328 \times 10^{-4} $ & -- & $ 9.725 \times 10^{-4} $ & -- \\
            $ \tau_{0} / 2^{1} $ & $ h_{0} / 2^{1} $ & $ 1.231 \times 10^{-4} $ & 2.94 & $ 4.957 \times 10^{-5} $ & 2.88 & $ 1.846 \times 10^{-5} $ & 2.85 & $ 1.446 \times 10^{-4} $ & 2.75 \\
            $ \tau_{0} / 2^{2} $ & $ h_{0} / 2^{2} $ & $ 1.534 \times 10^{-5} $ & 3.01 & $ 6.424 \times 10^{-6} $ & 2.95 & $ 2.478 \times 10^{-6} $ & 2.90 & $ 2.014 \times 10^{-5} $ & 2.84 \\
            $ \tau_{0} / 2^{3} $ & $ h_{0} / 2^{3} $ & $ 1.898 \times 10^{-6} $ & 3.01 & $ 8.167 \times 10^{-7} $ & 2.98 & $ 3.263 \times 10^{-7} $ & 2.93 & $ 2.748 \times 10^{-6} $ & 2.87 \\
            \hline
            \multicolumn{2}{c}{Optimal} &  & 3.00 &  & 3.00 &  & 3.00 &  & 3.00 \\
            \hline
        \end{tabular}
        \caption{Experimental orders of convergence (eoc) for the cGP(2)-method using $ \ell = 2 $.}
        \label{tab:dg_cgp2}
    \end{table}

    \section{Conclusion}
    In this paper, we proposed a family of time-continuous strongly conservative space-time finite element methods for the dynamic Biot model.
    To this end, we utilized the Galerkin-Petrov scheme for time discretization which constructs a continuous-in-time solution.
    We combined this with a discontinuous Galerkin method for the displacement and a mixed space discretization for the fluid flux, fluid pressure and the temporal derivative of the displacement.
    For this space-time discretization scheme, we derived and presented optimal error estimates in an energy norm.
    Finally, we conducted numerical experiments and observed that the experimental orders of convergence are in agreement with our theoretical findings.

    \section*{Acknowledgments}
    The first, second and third authors acknowledge the funding by the German Science Fund (Deutsche Forschungsgemeinschaft, DFG) as part of the project ``Physics-oriented solvers for multicompartmental poromechanics'' under grant number 456235063.

    \printbibliography
\end{document}